\theoremstyle{plain}
\newtheorem{theorem}{Theorem}
\newtheorem{corollary}[theorem]{Corollary}
\newtheorem{conjecture}[theorem]{Conjecture}
\newtheorem{proposition}[theorem]{Proposition}
\newtheorem{question}[theorem]{Question}
\theoremstyle{remark}
\newtheorem{definition}[theorem]{Definition}
\newtheorem{remark}[theorem]{Remark}
\newtheorem{example}[theorem]{Example}
\numberwithin{theorem}{section}
\title{Hilbert properties of varieties}
\author{Arno Fehm and Ariyan Javanpeykar}
\begin{document}

\begin{abstract}
This is a survey of results on the Hilbert property of algebraic varieties, and variants of it.    
\end{abstract}

\maketitle

\noindent
Starting from Hilbert's irreducibility theorem,
various Hilbert properties of fields and varieties have been
introduced, studied and applied.
This survey aims to give an overview of the various notions
and to document the state of the art of the research on this topic.
On the way we also collect several important open problems. \smallbreak

\tableofcontents

\subsection*{Where to find what}

Section \ref{sec:one} introduces and tells the history of the central notions of this survey:
Hilbert's irreducibility theorem, Hilbertian fields, the Hilbert property, and the weak Hilbert property.
The remaining two subsections discuss Hilbert properties for integral (instead of rational) points,
and special varieties in the sense of Campana.
Section \ref{sec:preservation} collects general results on the preservation of various Hilbert properties
under for example base change, morphisms and products.
The remaining sections each discuss one class of concrete varieties, usually over number fields, finitely generated fields or Hilbertian fields:
curves in Section \ref{sec:curves},
algebraic groups in Section \ref{sec:algebraic_groups},
surfaces in Section \ref{sec:surfaces},
and some classes of higher-dimensional varieties in Section \ref{sec:higher_dim} --
closed subvarieties of abelian varieties, Fano varieties, varieties with nef tangent bundle, Kummer varieties,
and symmetric products.

\subsection*{Notation}
Let $K$ be a field.
We denote by $\overline{K}$ an algebraic closure of $K$.
A {\em $K$-variety} or a {\em variety over $K$} is an integral separated scheme of finite type over $K$.
If $X$ is a $K$-variety and $L/K$ a field extension, we denote by $X_L=X\times_{{\rm Spec}(K)}{\rm Spec}(L)$ the base change of $X$ to $L$.
If $X$ and $Y$ are $K$-varieties, $X\sim Y$ denotes that $X$ and $Y$ are birationally equivalent.

\subsection*{Disclaimer}
Due to the vast amount of results relating to   Hilbert's irreducibility theorem in the context of algebraic varieties, we are not able to cover all of them.
Rather, we try to structure and compare some of the important results,
while still aiming to at least mention every paper that is directly related to the Hilbert property of varieties in the sense of Serre, or generalizations thereof.
As most of the theorems we state are a summary of the work of many people, we do not attribute theorems by names (with very few exceptions, like Hilbert's irreducibility theorem) but rather try to explain the various contributions in the ``proof'' following the theorem.
So apart from precise references, these proofs also contain some historical information, and in some cases hints to stronger and/or more general statements.

\subsection*{Acknowledgements} 
The authors are grateful to
Finn Bartsch,
Lior Bary-Soroker,
Pietro Corvaja, 
Pierre D\`ebes,
Julian Demeio, 
Davide Lombardo,
Dan Loughran,
Cedric Luger,
Sebastian Petersen,
Sam Streeter,
and Umberto Zannier 
for the exchange of many ideas relevant to this survey.
They are indebted to Jean-Louis Colliot-Th\'el\`ene, Sebastian Petersen, Sam Streeter, Olivier Wittenberg and Umberto Zannier for numerous helpful remarks and corrections on a preliminary version.

\section{Hilbert irreducibility} 
\label{sec:one}

\subsection{Hilbert's theorem}
\label{sec:HIT}

Hilbert's irreducibility theorem is the following result proven in \cite[Theorem~I]{Hilbert}:

\begin{theorem}\label{thm:Hilbert}
For every irreducible $f\in \mathbb{Q}[t,x]\setminus\mathbb{Q}[t]$ there exist infinitely many
$\tau\in\mathbb{Q}$ with $f(\tau,x)$ irreducible in $\mathbb{Q}[x]$.
\end{theorem}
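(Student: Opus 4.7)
The plan is to prove the quantitative statement that the set of $\tau \in \mathbb{Z} \cap [-N, N]$ for which $f(\tau, x)$ is reducible in $\mathbb{Q}[x]$ grows only sublinearly in $N$; since there are $2N+1$ integers in the range, infinitely many $\tau \in \mathbb{Z} \subset \mathbb{Q}$ must remain on which $f(\tau, x)$ stays irreducible. The case $n := \deg_x f = 1$ is trivial, so I assume $n \geq 2$; after invoking Gauss's lemma, clearing denominators in $t$, and performing an elementary change of $x$-variable, I may further assume $f \in \mathbb{Z}[t,x]$ is monic of degree $n$ in $x$.

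The first main step is to expand the roots of $f$ analytically at infinity: for $|t|$ sufficiently large, the equation $f(t,x) = 0$ has $n$ solutions $\xi_1(t), \ldots, \xi_n(t)$, each a convergent Puiseux series in $t^{-1/e}$ for some common ramification index $e \geq 1$. If $f(\tau, x) = g(x) h(x)$ is a nontrivial monic factorization in $\mathbb{Z}[x]$, then for some proper nonempty subset $I \subsetneq \{1, \ldots, n\}$ the roots of $g$ are exactly $\{\xi_i(\tau) : i \in I\}$, and in particular every elementary symmetric function $\phi_{I,j}(\tau) := e_j(\xi_i(\tau) : i \in I)$ must be an integer. The second main step is the observation that, because $f$ is irreducible over $\mathbb{Q}(t)$, the Galois group of the splitting field of $f$ over $\mathbb{Q}(t)$ acts transitively on $\{\xi_1, \ldots, \xi_n\}$, so for a proper $I$ the function $\phi_{I,j}(t)$ has a nontrivial Galois orbit and is therefore not a polynomial in $t$. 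Hence reducibility of $f(\tau, x)$ forces one of finitely many algebraic-but-non-polynomial Puiseux series to take an integer value at $\tau$.

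The main obstacle, and the technical heart of the argument, is the diophantine lemma: an algebraic Puiseux series $\phi(t) \in \overline{\mathbb{Q}}((t^{-1/e}))$ which is not itself a polynomial in $t$ takes integer values at only sublinearly many integers $\tau \in [-N, N]$ (Hilbert's original bound is roughly $O(N^{1/2})$, later sharpened by D\"orge and others). The idea is to compare $\phi(\tau)$ with its nearest integer and exploit either the fractional-exponent tail of the expansion when $e > 1$, or, after a suitable polynomial correction, the deviation from an actual Galois conjugate of $\phi$ via a Liouville-type estimate; either way, an integer value forces $\tau$ to satisfy a thin diophantine condition. Summing the resulting sublinear bounds over the finitely many pairs $(I, j)$ yields the desired $o(N)$ estimate, producing infinitely many $\tau \in \mathbb{Z}$ with $f(\tau, x)$ irreducible in $\mathbb{Q}[x]$.
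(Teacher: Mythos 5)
The paper does not reproduce a proof of Theorem~\ref{thm:Hilbert} (it is a survey citing Hilbert's original paper and, in Remark~\ref{rem:Hilbert} and the surrounding discussion, describing his argument as proceeding via Puiseux-series expansions, simplified later by D\"orge). Your proposal reproduces exactly this classical route, so it is the ``same approach,'' but there is a genuine gap in one of the algebraic steps.

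The problematic sentence is the claim that, because the Galois group of the splitting field of $f$ over $\mathbb{Q}(t)$ acts transitively on the roots $\xi_1,\dots,\xi_n$, ``for a proper $I$ the function $\phi_{I,j}(t)$ has a nontrivial Galois orbit and is therefore not a polynomial in $t$.'' Two things go wrong. First, transitivity on roots only guarantees that \emph{some} $j$ produces a $\phi_{I,j}$ which is moved by the Galois group (if all were fixed, $\prod_{i\in I}(x-\xi_i)\in\mathbb{Q}(t)[x]$ would be a proper factor of $f$); it is not true for an arbitrary $j$. Second, and more seriously, a nontrivial Galois orbit does \emph{not} imply that $\phi_{I,j}$ fails to be a polynomial in $t$. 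Consider $f(t,x)=x^2-2t^2$, irreducible over $\mathbb{Q}(t)$: here $\xi_1=\sqrt{2}\,t$, $\xi_2=-\sqrt{2}\,t$, and $\phi_{\{1\},1}(t)=\sqrt{2}\,t$ is a perfectly good polynomial in $t$, yet it is moved by the Galois group. So $\phi_{I,j}\notin\mathbb{Q}(t)$ is the correct conclusion, not ``$\phi_{I,j}$ is not a polynomial,'' and your diophantine lemma, as you stated it, has a hypothesis (``$\phi$ is not a polynomial in $t$'') that may simply not be satisfied.

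The gap is fixable, but it needs to be addressed explicitly. After selecting, for each proper nonempty $I$, an index $j$ with $\phi_{I,j}\notin\mathbb{Q}(t)$, you must split into two cases: (a) $\phi_{I,j}\in\overline{\mathbb{Q}}[t]$ but has at least one irrational algebraic coefficient; then $\phi_{I,j}$ has a Galois conjugate $\sigma(\phi_{I,j})\ne\phi_{I,j}$, and if $\phi_{I,j}(\tau)\in\mathbb{Q}$ for $\tau\in\mathbb{Q}$ one gets $\sigma(\phi_{I,j})(\tau)=\sigma(\phi_{I,j}(\tau))=\phi_{I,j}(\tau)$, forcing $\tau$ to be a root of the nonzero polynomial $\phi_{I,j}-\sigma(\phi_{I,j})$, hence only finitely many bad $\tau$; (b) $\phi_{I,j}$ is genuinely not a polynomial (fractional or negative exponents occur), which is the case your diophantine lemma handles. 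With this case distinction, and with the understanding that the diophantine lemma (the ``technical heart'' you acknowledge but only sketch) is taken as granted, the argument is the correct classical one.
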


\begin{remark}\label{rem:Hilbert}
In fact, Hilbert obtained infinitely many $\tau\in\mathbb{Z}$ with $f(\tau,x)$ irreducible.
He also deduced several generalizations:
over number fields instead of $\mathbb{Q}$,
for several polynomials instead of one $f$, for polynomials in several variables $t_1,\dots,t_n$ instead of $t$,
and in several variables $x_1,\dots,x_m$ instead of $x$.
We will state some of these variants explicitly later on
(see in particular Theorem~\ref{thm:Hilbertian} and Proposition~\ref{prop:Hilbertian}).
\end{remark}

\begin{example}
If $g\in\mathbb{Z}[t]$ is such that $g(\tau)$ is a perfect square for all sufficiently large $\tau\in\mathbb{Z}$, then $g$ is itself a square of another polynomial:
Otherwise one could apply Theorem \ref{thm:Hilbert} in the version for $\tau\in\mathbb{Z}$ (Remark \ref{rem:Hilbert}) to $f(t,x)=x^2-g(t)$.
\end{example}

Hilbert's motivation came from Galois theory:
Using the version of his theorem for several variables $t_1,\dots,t_n$ instead of $t$,
he deduces the following corollary \cite[Theorem IV]{Hilbert} and example:

\begin{corollary}\label{cor:Hilbert}
Let $F=\mathbb{Q}(t_1,\dots,t_n)$ be the rational function field in $n$ variables over $\mathbb{Q}$.
For every $f\in F[x]$ with Galois group $G:={\rm Gal}(f/F)$, there exists $\underline{\tau}\in\mathbb{Q}^n$
such that 
$f(\underline{\tau},x)\in\mathbb{Q}[x]$
is defined
and
${\rm Gal}(f(\underline{\tau},x)/\mathbb{Q})\cong G$.
\end{corollary}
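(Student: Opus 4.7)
The plan is to reduce the statement to the multivariable Hilbert irreducibility theorem of Remark~\ref{rem:Hilbert} applied to the minimal polynomial of a primitive element of the splitting field of $f$. Let $L$ be the splitting field of $f$ over $F$, choose $\theta\in L$ with $L=F(\theta)$, and let $h(y)\in F[y]$ be its minimal polynomial, so that $\deg h=[L:F]=|G|$. Clearing denominators I view $h$ as an irreducible element $h(\underline{t},y)\in\mathbb{Q}[t_1,\dots,t_n,y]\setminus\mathbb{Q}[t_1,\dots,t_n]$ (irreducibility over $F[y]$ transfers to $\mathbb{Q}[\underline{t},y]$ by Gauss's lemma).

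The crucial algebraic input is that $h$ splits completely over $L$: since $L=F(\theta)$, each conjugate $\sigma(\theta)$, $\sigma\in G$, equals $P_\sigma(\theta)$ for a unique $P_\sigma(y)\in F[y]$ of degree less than $|G|$. This yields the polynomial identity
\[
 h(\underline{t},y)\;\equiv\;\prod_{\sigma\in G}\bigl(y-P_\sigma(\underline{t},z)\bigr)\pmod{h(\underline{t},z)}
\]
in $F[y,z]$. Likewise each root of $f$ in $L$ can be written as $Q_i(\theta)$ for some $Q_i(y)\in F[y]$, while conversely $\theta$ is an $F$-polynomial in these roots (since they generate $L$ over $F$). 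Fix a single nonzero $D(\underline{t})\in\mathbb{Q}[\underline{t}]$ clearing all denominators appearing in the coefficients of the $P_\sigma$ and $Q_i$ as well as in the above identity, and whose non-vanishing also controls the leading coefficient of $h$ in $y$.

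Next I apply the multivariate form of Hilbert's theorem (Remark~\ref{rem:Hilbert}) to produce $\underline{\tau}\in\mathbb{Q}^n$ with $h(\underline{\tau},y)\in\mathbb{Q}[y]$ irreducible; since $\{D=0\}$ is a proper Zariski-closed subset of $\mathbb{A}^n_{\mathbb{Q}}$ and the Hilbert set is Zariski-dense, I may additionally ensure $D(\underline{\tau})\neq 0$. Let $\alpha\in\overline{\mathbb{Q}}$ be a root of $h(\underline{\tau},y)$, so $[\mathbb{Q}(\alpha):\mathbb{Q}]=|G|$. Specializing the identity above and reducing modulo $h(\underline{\tau},z)$ gives
\[
 h(\underline{\tau},y)\;=\;\prod_{\sigma\in G}\bigl(y-P_\sigma(\underline{\tau},\alpha)\bigr)
\]
in $\mathbb{Q}(\alpha)[y]$, so $\mathbb{Q}(\alpha)/\mathbb{Q}$ is Galois of degree $|G|$ and the assignment $\sigma\mapsto\bigl(\alpha\mapsto P_\sigma(\underline{\tau},\alpha)\bigr)$ defines an injective homomorphism $G\hookrightarrow{\rm Gal}(\mathbb{Q}(\alpha)/\mathbb{Q})$, which is an isomorphism by cardinality. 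Specializing the expressions $Q_i(\theta)$ together with the $F$-polynomial expressing $\theta$ in the roots of $f$ shows that the splitting field of $f(\underline{\tau},x)$ over $\mathbb{Q}$ is exactly $\mathbb{Q}(\alpha)$, and therefore ${\rm Gal}(f(\underline{\tau},x)/\mathbb{Q})\cong G$.

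The main obstacle is the bookkeeping in passing from the generic factorization over $L[y]$ to a specialization-robust algebraic identity: one must package every denominator, the mod-$h(\underline{t},z)$ identity, and the leading coefficient of $h$ in $y$ into a single non-vanishing condition $D(\underline{\tau})\neq 0$, so that one application of Hilbert's theorem (combined with the Zariski-density of the Hilbert set) simultaneously secures irreducibility of $h(\underline{\tau},y)$ and faithful specialization of all relevant identities. Once this is set up, the isomorphism of Galois groups is essentially formal.
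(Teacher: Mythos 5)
Your proof is correct, and it is essentially the classical specialization argument underlying Hilbert's own deduction (the paper does not prove this corollary itself but cites Hilbert's Theorem~IV, whose proof is along exactly these lines: specialize the minimal polynomial of a primitive element of the splitting field, and carry all the finitely many polynomial identities witnessing the Galois action through the specialization). Two small remarks: (i) rather than appealing separately to Zariski-density of the Hilbert set, you can invoke Proposition~\ref{prop:Hilbertian}(2) directly, which already produces $\underline{\tau}$ with $h(\underline{\tau},y)$ irreducible \emph{and} $D(\underline{\tau})\neq 0$ in one stroke; (ii) the claim that $\sigma\mapsto(\alpha\mapsto P_\sigma(\underline{\tau},\alpha))$ is a homomorphism rests on the generic identity $P_\tau\circ P_\sigma\equiv P_{\sigma\tau}\pmod{h}$ specializing faithfully, which your $D$ covers but which deserves a sentence, as does the observation that the $P_\sigma(\underline{\tau},\alpha)$ are pairwise distinct (they are the $|G|$ roots of the irreducible, hence separable, $h(\underline{\tau},y)$), giving injectivity.
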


\begin{example}
Let 
$f=\prod_{i=1}^n(x-t_i)=x^n+\sum_{i=1}^n(-1)^is_ix^{n-i}$ 
where
$s_i$ is the $i$-th elementary symmetric polynomial in $t_1,\dots,t_n$.
Then $f\in \mathbb{Q}(s_1,\dots,s_n)[x]$ has splitting field $\mathbb{Q}(t_1,\dots,t_n)$
and Galois group (over $F=\mathbb{Q}(s_1,\dots,s_n)$) the symmetric group $S_n$.
Indeed, $F$ is precisely the fixed field of 
the action of $S_n$ on
$\mathbb{Q}(t_1,\dots,t_n)$ 
by permuting the $t_i$.
It follows also that $s_1,\dots,s_n$ are algebraically independent,
and so we can apply Corollary \ref{cor:Hilbert} to $f$ to obtain
polynomials over $\mathbb{Q}$ with Galois group $S_n$.
\end{example}

In geometric terms, this argument of Hilbert exploits that the quotient variety $\mathbb{A}^n_\mathbb{Q}/S_n$ is again a rational variety (i.e., birationally equivalent to $\mathbb{A}^n_\mathbb{Q}$), in fact $\mathbb{A}^n_\mathbb{Q}/S_n\cong\mathbb{A}^n_\mathbb{Q}$, so that the Galois cover $\mathbb{A}^n_\mathbb{Q}\rightarrow\mathbb{A}^n_\mathbb{Q}/S_n$ can be specialized to a Galois extension of $\mathbb{Q}$ with the same Galois group.
(We will return to this geometric interpretation in Section \ref{sec:HP}.)
In what became known as the {\em Noether program},
Emmy Noether \cite{Noether} started the investigation of whether 
$\mathbb{A}^n_\mathbb{Q}/G$ is rational for all finite groups $G\leq S_n$, 
which if true would give a strategy to solve the inverse Galois problem:
Every finite group $G$ would be the Galois group of a Galois extension of $\mathbb{Q}$.
However, counterexamples found by Swan \cite{Swan} and Voskresenski\u{\i} \cite{Voskresenskii} 
show that $\mathbb{A}^n_\mathbb{Q}/G$ is non-rational for  $G=\mathbb{Z}/47\mathbb{Z}$,
and later the same was shown for    $G=\mathbb{Z}/8\mathbb{Z}$, see the references in \cite{Lenstra} and \cite[\S 1.6.1]{WittenbergPark} for more on the history of this problem.

\begin{remark}
Apart from Galois theory, Hilbert's theorem has found numerous applications in number theory and diophantine geometry.
For example, N\'eron \cite{Neron} used it to specialize abelian varieties from $\mathbb{Q}(t_1,\dots,t_n)$ to $\mathbb{Q}$, which was used to construct elliptic curves over $\mathbb{Q}$ of high rank,
see e.g.~\cite[Chapter 11]{Serre_MW}.
See also \cite{Silverman_specialization} for a strengthening in the case $n=1$,
and \cite{CT,LS,CS} for recent advances. 

Another example application
is the question
asked by H.~Friedman 
whether there exists a polynomial $f(x,y)\in \mathbb{Q}[x,y]$ such that the associated map $\tilde{f}\colon\mathbb{Q}\times \mathbb{Q}\to \mathbb{Q}$ is injective. 
Although this question remains currently unsolved, \cite[Theorem~1.1]{Poo10} uses Hilbert's irreducibility theorem to prove that such a polynomial exists if
the set of rational points of
a certain specific irreducible hypersurface in $\mathbb{P}^3_\mathbb{Q}$ 
is not Zariski-dense,
and \cite{Bre21} 
gives a proof,
conditional on a weak version of Lang's conjecture (see Conjecture \ref{conjecture:lang_mordellic} below),
of a   higher-dimensional generalization of Hilbert's irreducibility theorem first explored systematically in \cite[Section 3.8.1]{CZbook} (there called a `Hilbert property for fibrations'), which would imply that $\tilde{f}$ is never {\em bijective}.
\end{remark}

\begin{remark}
Hilbert's proof of Theorem \ref{thm:Hilbert} works with Puiseux series expansions (in $t$) of the roots of $f(x)$. See also \cite{VGR} for a modern account of Hilbert's approach and the combinatorics involved.
Hilbert's proof was simplified by Dörge \cite{Dörge}, see for example \cite{Völklein} for a modern exposition.
A non-standard proof was given by Roquette \cite{Roquette} following an approach to Hilbertian fields (see Section \ref{sec:Hilbertian}) of Gilmore and Robinson.
Most other proofs use reductions modulo different prime numbers, like 
Fried's proof \cite{Fried} that uses the Riemann hypothesis for curves,
or the one that follows from Theorem \ref{thm:WWA} below that uses the Lang--Weil estimates.

Several strengthenings exist:
For example, 
\cite{Schinzel} obtains a whole arithmetic progression $a+b\mathbb{Z}$
of $\tau$ for which $f(\tau,x)$ is irreducible,
and also Fried's proof mentioned above gives this.
Quantitative versions of Theorem \ref{thm:Hilbert}
give estimates for the number of $\tau$ in $\mathbb{Z}$ or $\mathbb{Q}$ such that $f(\tau,x)$ is irreducible, like \cite[Theorem 2.5]{Cohen} that uses the large sieve inequality, phrased in \cite[Theorem 3.4.4]{Serre} in terms of thin sets (cf.~Section \ref{sec:HP}).
For more recent developments see for example \cite{Zywina},
and see \cite{Zan98} for a version uniform across number fields. 
Effective versions, 
starting with \cite{Debes,SchinzelZannier},
give upper bounds on the smallest $\tau\in\mathbb{N}$ with $f(\tau,x)$ irreducible,
see \cite{PS,CDHNV} for recent results.
\end{remark}

\subsection{Hilbertian fields}
\label{sec:Hilbertian}

Lang \cite{Lang_diophantine_geometry} coined the term {\em Hilbertian} for fields $K$
over which the statement of Hilbert's irreducibility theorem holds:

\begin{definition}\label{def:Hilbertian}
A field $K$ is {\em Hilbertian}    
if for every irreducible $f\in K[t,x]$, separable and of positive degree in $x$,
there exist infinitely many $\tau \in K$ with $f(\tau,x)\in K[x]$ irreducible.
\end{definition}

\begin{example}
Finite fields and algebraically closed fields are trivially not Hilbertian.
The same holds for $K=\mathbb{R}$ (take e.g.~$f=x^2-t^2-1$) and $K=\mathbb{Q}_p$ (cf.~\cite[Example 15.5.5]{FJ}).
In fact, no local field is Hilbertian.
\end{example}

\begin{theorem}\label{thm:Hilbertian}
The field $K$ is Hilbertian in each of the following cases:
\begin{enumerate}
    \item $K$ is a number field.
    \item $K$ is finitely generated and transcendental over some field $k$.
\end{enumerate}
\end{theorem}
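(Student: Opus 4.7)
The plan is to reduce both claims to two stability properties of the class of Hilbertian fields:

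\textbf{Claim A.} For any field $k$, the rational function field $k(t)$ is Hilbertian.

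\textbf{Claim B.} Any finite extension $L/K$ of a Hilbertian field $K$ is Hilbertian.

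Granting these, part (1) is immediate: $\mathbb{Q}$ is Hilbertian by Theorem \ref{thm:Hilbert}, and a number field is a finite extension of $\mathbb{Q}$, so Claim B applies. For part (2), pick a transcendence basis $t_1,\dots,t_n$ of $K$ over $k$ with $n\ge 1$. Since $K$ is finitely generated over $k$, the extension $K/k(t_1,\dots,t_n)$ is finite. Applying Claim A once to $k$ shows $k(t_1)$ is Hilbertian; iterating Claim A over the successive ground fields $k(t_1),\dots,k(t_1,\dots,t_{n-1})$ shows $k(t_1,\dots,t_n)$ is Hilbertian; finally Claim B promotes this to $K$.

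For Claim A, I would follow Hilbert's original Puiseux-series approach, adapted so that the arithmetic ground field $\mathbb{Q}$ is replaced by $k(t)$. Given an irreducible $f\in k(t)[s,x]$, separable and of positive degree in $x$, one expands the roots of $f(s,x)$ as Puiseux series in $s-\sigma_0$ and analyses for which specializations $\sigma\in k(t)$ these roots remain Galois-conjugate over $k(t)$; the key input is a non-vanishing statement for certain resultants which has a direct analogue in $k(t)$. In positive characteristic one can instead use D\"orge's combinatorial argument, or the geometric approach via Lang--Weil invoked by Theorem \ref{thm:WWA} (forward-referenced in the paper). Either way, this step is classical and relatively clean, since $k(t)$ has an obvious analogue of ``integer'' specializations, namely the constants in $k$ or elements of $k[t]$ of bounded degree.

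The main obstacle is Claim B, where the Hilbertian hypothesis on the smaller field $K$ has to be transferred to the larger field $L$. The standard strategy is to let $\tilde L$ be a Galois closure of $L/K$ inside $\overline{K}$, and for an irreducible $g(t,x)\in L[t,x]$ of positive degree in $x$ to consider the Galois group $G$ of a splitting field of $g$ over $\tilde L(t)$, together with the subgroup fixing a root. A specialization $\tau\in K$ preserves irreducibility of $g(\tau,x)$ over $L$ precisely when the associated specialization homomorphism on $G$ lands outside a certain union of proper subgroups. One then encodes this condition as the non-reducibility of a finite collection of auxiliary polynomials in $K[t,y_i]$ and invokes Hilbertianity of $K$ (together with Remark \ref{rem:Hilbert} on the several-polynomial variant). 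This Galois-theoretic bookkeeping, going back to Franz and worked out carefully in Fried--Jarden's \emph{Field Arithmetic}, is where the real work lies.
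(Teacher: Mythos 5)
Your high-level decomposition into Claims A and B is exactly the structure underlying the paper's citations: Hilbert's own derivation of the number-field case from $\mathbb{Q}$ is precisely your Claim B applied once, and Fried--Jarden Theorem 13.4.2 (the reference given for part (2)) establishes Claim A and then ascends via Claim B. Your sketch of Claim B is the correct standard Galois-theoretic argument, and your reduction of part (2) via a transcendence basis is fine.

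The weak spot is your proposed route to Claim A: none of the three mechanisms you mention actually transfer to an arbitrary base field $k$. Hilbert's Puiseux-series proof, and D\"orge's simplification of it, rely on archimedean estimates on the growth of Puiseux coefficients of roots, and there is no substitute for these over $k(t)$ in general. The Lang--Weil route via Theorem~\ref{thm:WWA} requires reductions modulo infinitely many finite residue fields, which exist for number fields and global function fields but not for $k(t)$ when, say, $k=\mathbb{C}$. What the paper's ``Bertini-type argument'' actually does is geometric: one spreads out the cover of $\mathbb{A}^1_{k(t)}$ determined by $f$ to a generically finite cover $Y\to U$ over an open $U\subseteq\mathbb{A}^2_k$, and shows that the specializations $\tau$ for which $f(\tau,x)$ becomes reducible are confined to the ramification locus together with the images of the finitely many proper intermediate covers, hence lie in a thin set; for finite $k$ one supplements this with a Weil-type point count. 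So the architecture of your proof is correct and matches the paper's references, but Claim A needs the Bertini-type argument rather than any of the approaches you sketch.
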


\begin{proof}
(1) was in fact already proven by Hilbert \cite{Hilbert}.
(2) is a Bertini-type argument, see e.g.~\cite[Theorem 13.4.2]{FJ}.
\end{proof}

In particular, all global fields 
and more generally all infinite fields that are {\em finitely generated} (i.e.~finitely generated over their prime field) are Hilbertian.
Many more fields are known to be Hilbertian, 
in particular various fields of power series, like
${\rm Frac}(\mathbb{C}[\![x,y]\!])$ and ${\rm Frac}(\mathbb{Z}[\![t]\!])$, cf.~\cite{Weissauer,FehmParan}.
In Section~\ref{sec:basechange} we will discuss results regarding when 
an algebraic extension of a Hilbertian field is again Hilbertian.
Definition~\ref{def:Hilbertian} is equivalent to several variants of it:

\begin{proposition}\label{prop:Hilbertian}
For a field $K$, the following are equivalent:
\begin{enumerate}
    \item $K$ is Hilbertian.
    \item For every 
    $r,n\in\mathbb{N}$ and every
    $f_1,\dots,f_r\in K[t_1,\dots,t_n,x]$ that are irreducible in $K(\underline{t})[x]$
    and separable in $x$,
    and for every $0\neq g\in K[\underline{t}]$
    there exists $\underline{\tau}\in K^n$ with $g(\underline{\tau})\neq 0$
    such that $f_1(\underline{\tau},x),\dots,f_r(\underline{\tau},x)\in K[x]$ are irreducible.
    \item For every $r,n\in\mathbb{N}$ and every absolutely irreducible $f_1,\dots,f_r\in K[t_1,\dots,t_n,x]$ that are monic, separable and of degree at least $2$ in $x$
    there exists $\underline{\tau}\in K^n$ such that each $f_i(\underline{\tau},x)$ has no zero in $K$.
    \item For every absolutely irreducible $f\in K[t,x]$ that is separable and of positive degree in $x$, there exist infinitely many $\tau \in K$ with $f(\tau,x)\in K[x]$ irreducible.
\end{enumerate}
\end{proposition}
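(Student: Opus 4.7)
My plan is to prove the four conditions equivalent via the cycle $(1)\Rightarrow(2)\Rightarrow(3)\Rightarrow(4)\Rightarrow(1)$. The implication $(2)\Rightarrow(3)$ is immediate, since the hypotheses of $(3)$ are a special case of those in $(2)$ with $g=1$, and a monic polynomial of degree $\geq 2$ that is irreducible has no root in $K$.

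For $(1)\Rightarrow(2)$, I would handle the three added features one at a time. The side condition $g\neq 0$ costs only the removal of a proper closed subset, which does not obstruct finding a good $\underline{\tau}$ once we know that the set of good specializations is infinite and not contained in any such hypersurface; the passage from one to several variables $t_1,\dots,t_n$ is a Bertini-type specialization argument (valid because Hilbertian fields are infinite); and the passage from one to several polynomials $f_1,\dots,f_r$ is obtained via a primitive-element construction: if $\alpha_i$ is a root of $f_i$ in $\overline{K(\underline{t})}$, the minimal polynomial $F$ of a generic linear combination $\theta=\sum c_i\alpha_i$ over $K(\underline{t})$ is irreducible and separable, and irreducibility of the specialization $F(\underline{\tau},x)$ forces simultaneous irreducibility of each $f_i(\underline{\tau},x)$.

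For $(3)\Rightarrow(4)$, given absolutely irreducible $f\in K[t,x]$ separable and of positive degree in $x$, I would pass to its Galois closure $L/K(t)$, with group $G$ and stabilizer $H\leq G$ of a chosen root. For each of the finitely many conjugacy classes of maximal subgroups $M\leq G$ acting intransitively on $G/H$, the intermediate cover $\mathrm{Spec}(L^M)\to\mathbb{A}^1_K$ is birationally cut out by an absolutely irreducible polynomial $F_M\in K[t,y]$, which we may arrange to be monic, separable, and of degree $\geq 2$ in $y$ after standard normalizations. For $\tau$ outside a proper closed subset of $\mathbb{A}^1_K$, reducibility of $f(\tau,x)$ is equivalent to the decomposition group being contained in some such $M$, and this in turn is equivalent to some $F_M(\tau,y)$ having a $K$-root. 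Applying $(3)$ to the finite family $\{F_M\}_M$ produces a $\tau$ where no $F_M(\tau,y)$ has a $K$-root, hence $f(\tau,x)$ is irreducible; iterating by incorporating linear factors $(t-\tau_j)$ into the family produces infinitely many such $\tau$.

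Finally, for $(4)\Rightarrow(1)$, an irreducible but not necessarily absolutely irreducible $f\in K[t,x]$ is treated via the constant-field extension $K'/K$ (the algebraic closure of $K$ in $K(t)[x]/(f)$): over $K'(t)$ the polynomial $f$ splits into absolutely irreducible factors, and irreducibility of $f(\tau,x)$ over $K$ is controlled by the behavior of the specialization of $K'/K$, itself detected by an auxiliary absolutely irreducible polynomial to which $(4)$ may be applied. The main obstacle is the Galois-theoretic setup in $(3)\Rightarrow(4)$: correctly identifying the intermediate covers, proving they are defined by absolutely irreducible polynomials over $K$ in the precise form demanded by $(3)$, and establishing the correspondence between $K$-rational points on these covers and reducibility of the specialization $f(\tau,x)$.
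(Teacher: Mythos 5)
The paper's proof is a chain of citations structured as $(1)\Rightarrow(2)\Rightarrow(3)\Rightarrow(1)$ (Fried--Jarden, Prop.~13.2.2, Lemma~12.1.6, Lemma~13.1.2) plus the separate equivalence $(1)\Leftrightarrow(4)$, where $(4)\Rightarrow(1)$ is Bary-Soroker's theorem. Your cycle $(1)\Rightarrow(2)\Rightarrow(3)\Rightarrow(4)\Rightarrow(1)$ is a genuinely different decomposition, and the cost is that you must prove $(3)\Rightarrow(4)$, which the paper never needs, while still using the hard result $(4)\Rightarrow(1)$ at the end. Two of your steps have real gaps. In $(3)\Rightarrow(4)$, the intermediate field $L^M$ need not be regular over $K$: the Galois closure $L$ of $K(t)(\alpha)/K(t)$ can have constant field $K'\supsetneq K$ even when $f$ itself is absolutely irreducible, and then $L^M$ picks up the constants $L^{\langle M,N\rangle}$ where $N={\rm Gal}(L/K'(t))$; if $\langle M,N\rangle\neq G$ the polynomial $F_M$ is not absolutely irreducible and $(3)$ simply does not apply to it. (Concretely, $G=S_4$, $H=S_3$, $M=S_2\times S_2$ maximal among intransitive subgroups, $N=V_4$: then $\langle M,N\rangle=D_4\neq G$.) This is not a cosmetic normalization issue; it requires the observation that a non-absolutely-irreducible $F_M$ has, for all but finitely many $\tau$, no $K$-root of $F_M(\tau,y)$ anyway, and then a separate device (e.g.\ adjoining an auxiliary Artin--Schreier or square-root polynomial vanishing at the bad $\tau$'s) to exclude those finitely many exceptions --- your suggestion of ``incorporating linear factors $(t-\tau_j)$ into the family'' does not work, since $t-\tau_j$ is not a valid input to $(3)$ (it has degree $0$ in $x$). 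The same device is also what you need to upgrade the single $\tau$ produced by $(3)$ to the infinitely many demanded by $(4)$.

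Your $(4)\Rightarrow(1)$ is the most serious underestimate. The passage from the absolutely irreducible case to the general case is precisely the content of the cited Bary-Soroker paper, and the one-sentence plan of ``an auxiliary absolutely irreducible polynomial detecting the specialization of $K'/K$'' does not engage the actual difficulty: the constant field $K'$ is the algebraic closure of $K$ in $K(t)[x]/(f)$, which is \emph{not} regular over $K$, so no auxiliary polynomial in $K[t,x]$ that ``detects'' $K'$ directly will be absolutely irreducible, and one must instead work with a twisted cover over $K'(t)$ and carefully descend. Since you already invoke $(4)\Rightarrow(1)$ as a known theorem, it would be cleaner to follow the paper's route $(3)\Rightarrow(1)$ directly (which also must face the constant-field-of-$L$ issue, but only once) and let $(1)\Leftrightarrow(4)$ stand as an independent black box, rather than routing the main equivalence through $(4)$.
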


\begin{proof}
$(1)\Rightarrow(2)$ is \cite[Prop.~13.2.2]{FJ},
$(2)\Rightarrow(3)$ is \cite[Lemma 12.1.6]{FJ},
$(3)\Rightarrow(1)$ is \cite[Lemma 13.1.2]{FJ}, 
$(1)\Rightarrow(4)$ is trivial
and $(4)\Rightarrow(1)$ is \cite[Theorem 1.1]{BarySoroker}.    
\end{proof}

\begin{remark}\label{rem:PAC}
Hilbertian fields play an important role in Galois theory, model theory and field arithmetic.    
For example, it is conjectured that over every Hilbertian field $K$,
the inverse Galois problem has a positive answer,
and, more generally,
that every so-called finite split embedding problem is solvable \cite{DD}.
This was proven in \cite{Pop} in the case that $K$ is {\em large},
i.e.~every smooth $K$-curve $X$ satisfies $|X(K)|\in\{0,\infty\}$.
If a field $K$ is {\em pseudo-algebraically closed} (also called PAC),
i.e.~every geometrically integral $K$-variety $X$ satisfies $X(K)\neq\emptyset$,
a celebrated theorem of Fried-Völklein and Roquette states that $K$ is Hilbertian if and only if the absolute Galois group of $K$ is $\omega$-free \cite[Theorem 5.10.3]{Jarden_patching}.
This allows a satisfactory description of the first-order theory of Hilbertian PAC fields 
and makes them an object of intense study in model theory, see e.g.~\cite{Chatzidakis}.
\end{remark}

\begin{remark}\label{rem:Hilbertian_def}
Extensive sources on Hilbertian fields are
\cite[Chapter 9]{Lang_fundamentals},
\cite[Chapters 12-16]{FJ},
\cite[Chapter IV]{MM}, and
\cite[Chapter 1]{Völklein}.
For $K$ of positive characteristic, 
there are also variants of Definition \ref{def:Hilbertian} in the literature that are not equivalent to it.
Our definition (more precisely Proposition~\ref{prop:Hilbertian}(2))
is the one used in \cite{FJ} from the second edition onwards
and is the one commonly used nowadays.
There are also many actual strengthenings and weakenings in the literature, 
and we mention only some of them:
fully Hilbertian fields \cite{BP},
$g$-Hilbertian fields \cite[Chapter 13.6]{FJ},
RG-Hilbertian fields and R-Hilbertian fields \cite{DH},
real Hilbertian fields \cite{FHV},
$\Theta$-Hilbertian fields \cite{FH},
and Brauer-Hilbertian fields \cite{FSS}.
There are also the notions of  Hilbertian rings \cite[Chapter 13.4]{FJ}
and integrally Hilbertian rings \cite{BD}.
\end{remark}

\subsection{The Hilbert property}
\label{sec:HP}

Motivated by the failure of the Noether program,
Colliot-Th\'el\`ene and Sansuc \cite{CTS}
investigated whether it might be possible to specialize
the cover $\mathbb{A}_\mathbb{Q}^n\rightarrow\mathbb{A}_\mathbb{Q}^n/G$ to a 
Galois extension of $\mathbb{Q}$ with Galois group $G$
even if $\mathbb{A}^n_\mathbb{Q}/G$ is not rational.
This led to the following definition by Serre \cite{Serre} (see Remark \ref{rem:HP} below for historic details).
Here, $X$ is a $K$-variety (i.e.~an integral separated scheme of finite type over $K$),
and a {\em cover} is a finite surjective generically \'etale morphism of $K$-varieties.

\begin{definition}\label{def:thin}\label{def:HP}
A subset $\Sigma\subseteq X(K)$ is \emph{thin (in $X$)} if there are $n\in\mathbb{N}$
and for each $i\in\{1,\dots,n\}$ a $K$-variety $Y_i$ and a cover $\pi_i\colon Y_i\to X$
with ${\rm deg}(\pi_i)>1$ such that
$\Sigma \setminus \bigcup_{i=1}^n \pi_i(Y_i(K))$
is not Zariski-dense in $X$. 
The variety $X$ has the {\em Hilbert property (``$X$ has HP'')} if $X(K)$ is not thin in $X$.
\end{definition}

\begin{example}
By definition, if $X(K)$ is not Zariski-dense in $X$, then $X$ does not have HP.
So for example, by Faltings's theorem, no curve $X$ of genus $g_X>1$ over a number field $K$ has HP.
\end{example}

\begin{example}\label{ex:abelian_var_not_HP}
An elliptic curve $E$ over $\mathbb{Q}$ does not have HP:
By Mordell's theorem, $2E(\mathbb{Q})$ has finite index in $E(\mathbb{Q})$.
Thus if $P_1,\dots,P_r \in E(\mathbb{Q})$ is a system of coset representatives of $2E(\mathbb{Q})$, then $E(\mathbb{Q})=\bigcup_{i=1}^r\pi_i(E(\mathbb{Q}))$,
where $\pi_i\colon E\rightarrow E$ is the cover of degree $4$ given by
$P\mapsto 2P+P_i$.
The same argument works for any nonzero abelian variety $A$
over a finitely generated field $K$ of characteristic zero,
as $A(K)$ is finitely generated by the theorems of Mordell--Weil and Lang--N\'eron, cf.~\cite[Exercise 4 on p.~20]{Serre}.
\end{example}

\begin{remark}\label{rem:HP}\label{rem:HP_birat}
By definition, HP is a birational property,
so if $X$ is birationally equivalent to a $K$-variety $X'$ (written $X\sim X'$),
then $X$ has HP if and only if $X'$ does.
In Definition \ref{def:HP} one can replace the finite surjective generically \'etale morphisms $\pi\colon Y_i\rightarrow X$ by generically finite generically \'etale dominant rational maps $\pi_i\colon Y_i\dashrightarrow X$ without changing the class of thin sets.
Similarly, one could demand that the $Y_i$ are geometrically integral, since otherwise $\pi_i(Y_i(K))$ is not Zariski-dense in $X$.
\end{remark}

\begin{example}\label{ex:HP_implies_Hilbertian}
$\mathbb{A}^n_K$ has HP if and only if $K$ is Hilbertian, as follows essentially from Proposition~\ref{prop:Hilbertian}(3) and Remark \ref{rem:HP}.
For example, 
if $K$ is finitely generated and infinite,
then every $K$-variety $X$ 
that is rational (i.e.~$X\sim\mathbb{A}_K^n$ for some $n$) has HP (Theorem \ref{thm:Hilbertian} and Remark \ref{rem:HP}).
\end{example}

\begin{example}\label{ex:PAC}
If $K$ is Hilbertian and PAC (cf.~Remark \ref{rem:PAC}),
then every $K$-variety has HP (cf.~\cite[Prop.~5.1]{BFP24}).
An example of such a field is $\mathbb{Q}^{\rm tr}(\sqrt{-1})$,
where $\mathbb{Q}^{\rm tr}$ is the field
of totally real algebraic numbers,
which itself is neither Hilbertian nor PAC,
cf.~\cite[Prop.~5.6]{BFP24}.
\end{example}

The connection to the Noether program is given by the following observation, 
which is the geometric version of the equivalence of (2) and (3) in Proposition~\ref{prop:Hilbertian}:

\begin{proposition}
A $K$-variety $X$ has HP if and only if
for every $K$-variety $Y$ and every Galois cover $\pi\colon Y\rightarrow X$ 
there exists a Zariski-dense set of $x\in X(K)$ such that the fiber $\pi^{-1}(x)$ is integral.
\end{proposition}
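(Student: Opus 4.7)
The approach is to translate integrality of fibers of a Galois cover into surjectivity of an associated Galois representation. Given a Galois cover $\pi \colon Y \to X$ with group $G$ and $x \in X(K)$ in the \'etale locus of $\pi$, the fiber $\pi^{-1}(x) = {\rm Spec}(A)$ is the spectrum of a finite \'etale $K$-algebra $A$ of rank $|G|$ whose geometric points form a $G$-torsor, and the ${\rm Gal}(\overline{K}/K)$-action is encoded by a homomorphism $\rho_x \colon {\rm Gal}(\overline{K}/K) \to G$ (well defined up to $G$-conjugation). Then $\pi^{-1}(x)$ is integral if and only if $A$ is a field if and only if $\rho_x$ is surjective; and for a subgroup $H \leq G$, a $K$-point of $Y/H$ above $x$ exists precisely when the image of $\rho_x$ is contained in a conjugate of $H$.

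For the direction $(\Rightarrow)$, I would assume $X$ has HP and, for a given Galois cover $\pi \colon Y \to X$ with group $G$, enumerate the (finitely many) maximal proper subgroups $H_1,\dots,H_m$ of $G$ and consider the quotient covers $\pi_j \colon Y/H_j \to X$, each of degree $[G:H_j] > 1$. The Hilbert property then yields that $\Sigma := X(K) \setminus \bigcup_{j=1}^m \pi_j((Y/H_j)(K))$ is Zariski-dense, and intersecting with the (dense open) \'etale locus of $\pi$ preserves this. By the dictionary above, an $x$ in this intersection has $\rho_x$ avoiding every maximal proper subgroup, hence $\rho_x$ is surjective and $\pi^{-1}(x)$ is integral, so these $x$ form a Zariski-dense set with the required property.

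For the direction $(\Leftarrow)$, given covers $\pi_i \colon Y_i \to X$ of degree $> 1$ for $i = 1,\dots,n$, the key move is to package them into one Galois cover. I would let $L_i$ be the Galois closure of $K(Y_i)/K(X)$, form the compositum $\tilde L = L_1 \cdots L_n$ inside a fixed algebraic closure of $K(X)$, and take $\tilde \pi \colon \tilde Y \to X$ to be the normalization of $X$ in $\tilde L$. Then $\tilde \pi$ is a Galois cover whose group $G$ embeds as a subdirect product in $\prod_i {\rm Gal}(L_i/K(X))$, and the subgroups $\tilde H_i := {\rm Gal}(\tilde L / K(Y_i))$ are proper in $G$ since $[G:\tilde H_i] = \deg \pi_i > 1$. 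Applying the hypothesis to $\tilde \pi$ produces a Zariski-dense set $D \subseteq X(K)$ with $\tilde \pi^{-1}(x)$ integral; restricting $D$ to the common \'etale locus of $\tilde \pi$ and of all $\pi_i$ (a dense open) keeps it Zariski-dense. For any such $x$, $\tilde \pi^{-1}(x) = {\rm Spec}(L)$ with $L/K$ Galois of group $G$, so $\pi_i^{-1}(x) = {\rm Spec}(L^{\tilde H_i})$ is the spectrum of a field of degree $[G:\tilde H_i] > 1$ over $K$ and thus has no $K$-point. Hence $D$ is disjoint from every $\pi_i(Y_i(K))$, proving HP. The main obstacle is precisely this combining step: HP quantifies over arbitrary finite families of covers while the hypothesis only gives information one Galois cover at a time, and the compositum-of-Galois-closures construction is the standard bridge.
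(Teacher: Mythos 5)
Your proposal follows essentially the same route as the paper's proof: the forward direction uses the quotient covers $Y/H\to X$ for proper subgroups $H\leq G$ (you economize slightly by taking only the maximal ones), and the backward direction forms the Galois cover given by normalizing $X$ in the compositum of the Galois closures of the $K(Y_i)$ — which is the same field as the paper's ``Galois closure of the compositum.'' Your version is a bit more explicit (the Galois-representation dictionary, the restriction to the common \'etale locus), but these are details the paper's sketch takes as understood, and the argument is not materially different.
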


\begin{proof}
For $\Rightarrow$, 
if $\pi\colon Y\rightarrow X$ is a Galois cover with Galois group $G$,
$\pi^{-1}(x)$ is integral for every $x\in X(K)$ over which $\pi$ is unramified
and which does not lie in the image of $(Y/H)(K)$ for the finitely many proper subgroups $H$ of $G$, cf.~\cite[Prop.~3.3.1]{Serre}.
For $\Leftarrow$, if $\pi_i\colon Y_i\rightarrow X$ are covers with ${\rm deg}(\pi_i)>1$,
let $Y$ be the normalization of $X$ in the Galois closure of the compositum of the function fields
$K(Y_i)$ (embedded in an algebraic closure of $K(X)$).
Then every $x\in X(K)$ with $\pi^{-1}(x)$ integral does not lie in $\bigcup_{i=1}^n\pi_i(Y_i(K))$.  
\end{proof}

So if $X=\mathbb{A}^n_\mathbb{Q}/G$ has HP, then there exists $x\in X(K)$ for which $\pi^{-1}(x)$ is integral, where $\pi\colon \mathbb{A}^n_\mathbb{Q}\rightarrow X$ is as above, and then
$\pi^{-1}(x)={\rm Spec}(L)$ for a Galois extension $L/K$ with ${\rm Gal}(L/K)\cong G$,
similar to Corollary \ref{cor:Hilbert}.
This raised the questions whether all unirational varieties over $\mathbb{Q}$
(i.e.~all $X$ which admit a dominant rational map $\mathbb{A}^n_\mathbb{Q}\dashrightarrow X$ for some $n$)
have HP,
and which varieties over number fields in general have HP.
Regarding the first question, conjecturally
all unirational varieties over number fields have HP:

\begin{definition}\label{def:WWA}
A variety $X$ over a number field $K$  satisfies {\em weak weak approximation (``$X$ satisfies WWA'')} if there is a finite set $S_0$ of places of $K$ such that
for every finite set $S_1$ of places of $K$ with $S_1\cap S_0=\emptyset$,
the image of the diagonal embedding $X(K)\rightarrow\prod_{v\in S_1}X(K_v)$
is dense, where $K_v$ denotes the completion of $K$ at $v$.
\end{definition}

For example, $\mathbb{A}^n_K$ satisfies WWA (in fact with $S_0=\emptyset$)
by the usual Artin--Whaples weak approximation theorem \cite[Theorem XII.1.2]{Lang_algebra}, and therefore so does every smooth rational $K$-variety, cf.~\cite[Lemma 3.5.5]{Serre}.

\begin{theorem}\label{thm:WWA}
If a variety $X$ over a number field $K$ satisfies WWA, then $X$ has HP.
\end{theorem}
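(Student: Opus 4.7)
The plan is to prove HP directly. Given covers $\pi_1,\dots,\pi_n\colon Y_i\to X$ of degree $>1$ and an arbitrary nonempty Zariski-open $V\subseteq X$, I will produce a $K$-point of $V$ lying in none of the sets $\pi_i(Y_i(K))$. The strategy combines a local obstruction coming from the Chebotarev density theorem for covers of varieties with a global gluing supplied by WWA. By Remark~\ref{rem:HP_birat} I may assume each $Y_i$ is geometrically integral (else $\pi_i(Y_i(K))$ is not Zariski-dense), and since HP and WWA both pass to the smooth locus, I may assume $X$ itself is smooth.

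For each $i$, let $\widetilde{Y}_i\to X$ be the Galois closure of $\pi_i$, with Galois group $G_i$, and let $H_i\subsetneq G_i$ be the subgroup fixing $Y_i$. By Jordan's elementary lemma some $g_i\in G_i$ lies in no conjugate of $H_i$, i.e.\ acts without fixed points on $G_i/H_i$. Spread everything to a smooth model over ${\rm Spec}\,\mathcal{O}_{K,T}$ for a finite set $T$ of places containing $S_0$ (from Definition~\ref{def:WWA}), the archimedean places, and the bad-reduction loci of $X$, the $Y_i$ and the $\pi_i$. The Chebotarev density theorem for the étale Galois cover $\widetilde{Y}_i\to X$ over $\mathcal{O}_{K,T}$ (cf.~\cite{Serre}) yields a positive density of places $v\notin T$ admitting an $\mathbb{F}_v$-point $\bar x$ of $X$ whose Frobenius is conjugate to $g_i$; for such $\bar x$ the fiber $\pi_i^{-1}(\bar x)$ has no $\mathbb{F}_v$-point. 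Choose such a $v_i$, distinct from the previously chosen $v_1,\dots,v_{i-1}$, with $\bar x$ in the $\mathbb{F}_{v_i}$-reduction of $V$ (possible, as that reduction is geometrically integral and so has $\mathbb{F}_{v_i}$-points for all sufficiently large $v_i$ by Lang--Weil). Let $U_i\subseteq V(K_{v_i})$ be the nonempty open $v_i$-adic tube of $K_{v_i}$-points reducing to $\bar x$. Étaleness of $\pi_i$ above $\bar x$ forces any $K_{v_i}$-preimage of a point in $U_i$ to reduce to an $\mathbb{F}_{v_i}$-point of $\pi_i^{-1}(\bar x)$; since no such point exists, $U_i\cap\pi_i(Y_i(K_{v_i}))=\emptyset$.

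Now apply WWA to $S_1=\{v_1,\dots,v_n\}$, which is disjoint from $S_0$: the image of $X(K)$ in $\prod_i X(K_{v_i})$ is dense, so it meets the nonempty open product $\prod_i U_i$. Any resulting $x\in X(K)$ lies in every $U_i$, hence in $V(K)$ and not in any $\pi_i(Y_i(K_{v_i}))\supseteq\pi_i(Y_i(K))$. As $V$ was arbitrary, $X(K)\setminus\bigcup_i\pi_i(Y_i(K))$ is Zariski-dense in $X$, so $X$ has HP.

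The main obstacle is the local step: for each cover of degree $>1$, producing a place $v$ and an $\mathbb{F}_v$-point of $X$ whose $\pi_i$-fiber is empty. Jordan's lemma is elementary, but it must be combined with a Chebotarev density theorem for varieties -- whose ultimate input is the Lang--Weil estimates -- in order to realize $g_i$ as a Frobenius. Once this local obstruction is in hand, WWA performs the gluing essentially formally, and the Zariski density of the complement follows from the arbitrariness of $V$.
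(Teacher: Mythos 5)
Your proposal is correct and follows essentially the same approach as the proof the paper cites (Serre, Theorem~3.5.7, following Ekedahl and a letter of Colliot-Th\'el\`ene): local obstructions built from Jordan's lemma and the Lang--Weil/Chebotarev estimates at well-chosen places, glued into a global rational point by WWA. The one spot that deserves more care is the Frobenius-realizability of $g_i$ when the Galois closure $\widetilde{Y}_i$ is not geometrically connected; this is handled most cleanly by applying Jordan's lemma to the geometric monodromy group $\overline{G}_i$ (which acts transitively on $G_i/H_i$ precisely because $Y_i$ is geometrically integral) and then restricting to places split in the constant field extension, although the version you state also goes through using the full strength of the Chebotarev theorem for the number field $L_i/K$.
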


\begin{proof}
This is \cite[Theorem 3.5.7]{Serre}, where it is 
deduced from the Lang--Weil estimates
and is attributed to \cite{Ekedahl} and 
a letter of Colliot-Th\'el\`ene to Ekedahl from 1987\footnote{mistakenly dated 1988 in \cite{Serre}}. This letter is also the reference given for the following conjecture.
\end{proof}

\begin{conjecture}\label{conj:WWA}
Every smooth unirational variety over a number field satisfies WWA, so in particular it has HP.
\end{conjecture}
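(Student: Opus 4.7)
The plan is to exploit the unirational parameterization. By hypothesis, there is a dominant rational map $\phi\colon \mathbb{P}^n_K \dashrightarrow X$, defined on an open subset $U \subseteq \mathbb{P}^n_K$. First I would spread $\phi$ out to a morphism of integral models over $\mathrm{Spec}(\mathcal{O}_{K,S_0})$ for a sufficiently large finite set $S_0$ of places of $K$, taken to contain the archimedean places, the places of bad reduction for $X$, and the places over which the closure of $\phi$ fails to be defined and smooth on the closure of $U$.

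For $v \notin S_0$, the $v$-adic implicit function theorem shows that the induced map $\phi_v\colon U(K_v) \to X(K_v)$ is open at every point where $\phi$ is smooth, so that its image $W_v \subseteq X(K_v)$ is open. Given a finite set $S_1$ of places disjoint from $S_0$ and nonempty open subsets $\Omega_v \subseteq X(K_v)$ for $v \in S_1$, the idea is to pick $y_v \in \phi_v^{-1}(\Omega_v)$ and then invoke the Artin--Whaples weak approximation theorem for $\mathbb{P}^n_K$ to produce $y \in U(K)$ lying in any prescribed $v$-adic neighborhood of $y_v$ for each $v \in S_1$. By $v$-adic continuity of $\phi_v$, the global point $\phi(y) \in X(K)$ then lies in $\Omega_v$ for every $v \in S_1$, giving WWA; Theorem~\ref{thm:WWA} would then yield HP.

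The hard part is the hypothesis that $y_v$ exists, which amounts to $W_v$ being dense in $X(K_v)$ for every $v \notin S_0$. A single parameterization will generally not achieve this: the image $\phi_v(U(K_v))$ can miss entire connected components of $X(K_v)$, as already happens over $\mathbb{R}$ for certain smooth unirational varieties. To handle this one would try either to cover $X$ by finitely many unirational parameterizations whose images jointly dominate $X(K_v)$ at almost all $v$, or to prove that the obstruction to density --- measurable for example through $R$-equivalence classes or through the algebraic part of the Brauer group of $X$ --- is concentrated at only finitely many places. It is precisely the lack of a uniform mechanism ensuring density of the image at all but finitely many $v$, together with the subtle arithmetic of weak approximation already in well-studied classes such as cubic surfaces, that keeps Conjecture~\ref{conj:WWA} open: the reduction sketched above turns the problem into a genuine local-to-global density question for the image of a rational parameterization, and no general technique is presently known for settling it.
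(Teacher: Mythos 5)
The statement you were asked to prove is labelled a \emph{conjecture} in the paper, not a theorem, and the paper offers no proof of it: it is attributed to a 1987 letter of Colliot-Th\'el\`ene to Ekedahl and remains open. The only result the paper actually proves nearby is Theorem~\ref{thm:WWA}, namely that WWA implies HP (via Lang--Weil), which accounts for the ``so in particular it has HP'' clause but not for the assertion that smooth unirational varieties satisfy WWA. You recognize this yourself in your final paragraph, so your write-up is not a proof attempt so much as a correct diagnosis that none exists.

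Your analysis of where the naive argument breaks is essentially in the right place. Spreading out and using weak approximation on $\mathbb{P}^n_K$ does reduce the problem to showing that $\phi_v(U(K_v))$ is dense in $X(K_v)$ for all $v$ outside some fixed finite $S_0$, and that is exactly the sticking point. Two remarks to sharpen your picture. First, the archimedean and bad-reduction examples you cite can simply be swept into $S_0$; the genuine difficulty is uniform density at the remaining places. If the generic fibre of $\phi$ were geometrically irreducible, Lang--Weil together with Hensel lifting would give density at almost all non-archimedean $v$, but a unirational parameterization need not have geometrically irreducible generic fibre, and passing to the Stein factorization trades the problem for one on a nontrivial intermediate cover $X'\to X$, which in general makes matters worse rather than better. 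Second, the conjectural framework that would close the gap is precisely the one you gesture at: for a smooth proper geometrically rationally connected variety $X$, the group $\mathrm{Br}(X)/\mathrm{Br}(K)$ is finite, so if (as Colliot-Th\'el\`ene conjectures) the Brauer--Manin obstruction is the only obstruction to weak approximation, then the obstruction is concentrated at finitely many places and WWA follows. This is itself a deep open conjecture, which is exactly why Conjecture~\ref{conj:WWA} is stated as a conjecture rather than proved.
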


Regarding the second question, Corvaja and Zannier observed an important necessary condition,
and they  raised the question whether this might possibly be the only one:
 
\begin{theorem}\label{thm:CZ}
Let $X$ be a normal projective variety over a finitely generated field $K$.
If $X$ has HP, then $X_{\overline{K}}$ is simply connected in the sense that every   \'etale cover $Y\rightarrow X_{\overline{K}}$ is trivial.  
\end{theorem}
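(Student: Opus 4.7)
The plan is to argue by contrapositive: assuming that $X_{\overline{K}}$ admits a nontrivial connected \'etale cover, I will exhibit a thin set equal to $X(K)$, contradicting HP. The main input is a version of the Chevalley--Weil theorem valid for normal projective varieties over finitely generated base fields.

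First, replacing the given cover by its Galois closure, I may assume a Galois \'etale cover $\tilde Y\to X_{\overline K}$ with some finite group $G$ of order $>1$. By finiteness of $G$, this cover descends to a finite Galois extension $L/K$: there is a geometrically integral \'etale Galois cover $\pi_L\colon Y_L\to X_L$ with group $G$ whose base change to $\overline K$ recovers $\tilde Y\to X_{\overline K}$. The composite $\pi\colon Y_L\to X_L\to X$ is then a finite \'etale morphism of $K$-schemes of degree $|G|\cdot[L:K]>1$; since $Y_L$ is an integral $L$-scheme and $L/K$ is separable, $Y_L$ is an integral $K$-scheme, so $\pi$ is a cover of $X$ in the sense of Definition~\ref{def:HP}. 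Its geometric base change $Y_L\times_K\overline K$ splits into $[L:K]$ copies of $\tilde Y$, each mapping to $X_{\overline K}$ via the original cover.

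Next I apply Chevalley--Weil to the cover $\pi$, obtaining a finite extension $M/K$ (which may be enlarged to be Galois) such that for every $x\in X(K)$ the fiber $\pi^{-1}(x)$---an \'etale $K$-algebra of dimension $|G|\cdot[L:K]$---is split by $M$. Equivalently, as $x$ ranges over $X(K)$ the isomorphism classes of $\pi^{-1}(x)$ form a finite set $T$. For finitely generated $K$ this refined Chevalley--Weil is proved by spreading out $\pi$ to a finite \'etale morphism of normal projective models over a suitably chosen finitely generated subring $R\subset K$---this is where normality and projectivity of $X$ enter---and invoking a Hermite--Minkowski-type finiteness for unramified covers of ${\rm Spec}(R)$.

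To finish, each class $\tau\in T$ corresponds to a twist $\pi_\tau\colon Y_\tau\to X$ of $\pi$, which is again a cover of degree $|G|\cdot[L:K]>1$, with $x\in\pi_\tau(Y_\tau(K))$ precisely when the fiber $\pi^{-1}(x)$ has isomorphism class $\tau$. Hence $X(K)=\bigcup_{\tau\in T}\pi_\tau(Y_\tau(K))$ is thin in $X$, contradicting HP. The delicate step---on which I expect to spend most of the effort---is the passage from the finite set of isomorphism classes to this explicit finite family of $K$-covers with the covering property: the cover $\pi$ itself has $Y_L(K)=\emptyset$ when $L\neq K$, so one must genuinely work with its $K$-twists, and correctly setting up this descent is the main technical bookkeeping.
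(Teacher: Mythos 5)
Your overall architecture and your identification of where finite generation, normality and projectivity enter (the spread-out Chevalley--Weil step, reducing to a Hermite--Minkowski-type finiteness over a finitely generated $\mathbb{Z}$-algebra) are correct, and this is indeed the route taken in \cite{CZ} for number fields and extended in \cite{CDJLZ,Luger}, which the paper simply cites. But the concluding twisting step has a genuine gap. The composite $\pi\colon Y_L\to X_L\to X$ is in general \emph{not} a Galois cover of $X$: $K(Y_L)$ is a $G$-extension of $K(X_L)=L\cdot K(X)$, and this tower need not be normal over $K(X)$. For a non-Galois finite \'etale cover $\pi$, the assertion that each $\tau\in T$ is realised by a $K$-twist $\pi_\tau\colon Y_\tau\to X$ with $x\in\pi_\tau(Y_\tau(K))$ if and only if $\pi^{-1}(x)\cong\tau$ is false. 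The $K$-twists of $\pi$ are classified by $H^1\bigl(K,\operatorname{Aut}_{X_{\overline{K}}}(Y_{L,\overline{K}})\bigr)$, and twisting can only produce a $K$-point in the fibre over $x$ for those $x$ whose monodromy action is compatible with this geometric automorphism group; when $\pi$ is Galois the automorphism group acts simply transitively on geometric fibres and the twisting lemma gives exactly what you want, but for a non-Galois cover the automorphism group may be much smaller than the fibre --- even trivial, when the corresponding open subgroup of $\pi_1^{\mathrm{et}}$ is self-normalising --- so that most $x$ lift to no twist at all, and the union $\bigcup_{\tau}\pi_\tau(Y_\tau(K))$ has no reason to exhaust $X(K)$.

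The standard fix is to replace $\pi$ by its Galois closure $Z\to X$, i.e.\ the normalisation of $X$ in the Galois closure of $K(Y_L)/K(X)$. This is still finite \'etale (Galois closures of \'etale covers are \'etale), and $Z_{\overline{K}}\to X_{\overline{K}}$ is still nontrivial since it dominates $Y_L\times_K\overline{K}\to X_{\overline{K}}$, which has nontrivial connected components. Now $Z\to X$ is a torsor under its Galois group $\widetilde{G}$, so each $x\in X(K)$ gives a class $c(x)\in H^1(K,\widetilde{G})$; your Chevalley--Weil argument, applied to $Z$, shows that only finitely many classes $[c_1],\dots,[c_m]$ occur; and the twisting lemma for torsors says precisely that $x\in\pi^{c_i}(Z^{c_i}(K))$ if and only if $c(x)=[c_i]$. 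The twists $Z^{c_i}\to X$ have degree $|\widetilde{G}|>1$ and cover $X(K)$, giving the required thin set. Alternatively, one can sidestep the non-Galois composite entirely via Theorem~\ref{thm:base_change}(1): HP is preserved under finite base change, so replace $K$ by $L$ and apply the torsor twisting argument directly to the Galois cover $Y_L\to X_L$.
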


\begin{proof}
In the case where $K$ is a number field,
this is \cite[Theorem 1.4]{CZ}, which proves and uses a variant of the Chevalley--Weil theorem for number fields.
The same proof goes through for finitely generated $K$
by using the corresponding generalization of the Chevalley--Weil theorem in \cite[Theorem 3.8]{CDJLZ}; see \cite[Theorem~3.2]{Luger}.
\end{proof}

\begin{question}[{\cite[Question-Conjecture 1]{CZ}}]\label{conj:CZ1}
Does every
smooth projective variety $X$ over a number field $K$
with $X_{\overline{K}}$ simply connected
and $X(K)$ Zariski-dense in $X$ have HP?
\end{question}

\begin{remark}\label{rem:smooth_necessary}
As evidence that the answer to Question \ref{conj:CZ1} might be positive, Corvaja and Zannier draw conclusions about Kummer varieties \cite[p.~589]{CZ} which they then prove in the case of Kummer surfaces \cite[Appendix]{CZ},
also cf.~Section \ref{sec:kappa0}.
However, they also give an example \cite[Theorem 1.5]{CZ} that shows that the smoothness assumption is necessary,
and they suggest that it might be necessary to
allow for a finite extension of the field $K$ for the answer to be positive. 
\end{remark}

Theorem \ref{thm:CZ} gives another explanation for why
abelian varieties over number fields do not have HP (Example \ref{ex:abelian_var_not_HP}).

\begin{remark}\label{rem:HP}
The Hilbert property and thin sets are discussed in the textbooks
\cite[Chapter 3]{Serre},
\cite[Chapter 9]{Serre_MW}, and
\cite[Chapter 13.5]{FJ}.
The first definition of HP in \cite[p.~189]{CTS} (there called ``Hilbert type'') was phrased in terms of specializations of polynomials,
\cite{Serre} introduces the notion of thin sets,
while the definition we give here is adjusted for positive characteristic (cf.~Remark \ref{rem:Hilbertian_def}).
Some results on HP predate its formulation as a property, like \cite[Theorem 46.1]{Manin}, whose first Russian edition appeared in 1972.
Thin sets show up in many other places,
for example as exceptional sets in modern formulations of Manin's conjecture on the asymptotic growth of rational points of bounded height on Fano varieties, see the references in \cite{BL,LST}.
\end{remark}

\begin{remark}
Conjecture \ref{conj:WWA} is proven in several cases. 
Generalizations and strengthenings of Conjecture \ref{conj:WWA} exist:
For example for the bigger class of rationally connected varieties,
and with a concrete finite set $S_0$ in Definition \ref{def:WWA}, coming from the Brauer--Manin obstruction \cite{CT_points_rationnels}.
The latter strengthening was proven for example over number fields for $X={\rm SL}_n/G$ with $G\leq{\rm SL}_n(K)$ a  supersolvable finite group \cite[Th\'eor\`eme B]{HW2},
which allows to draw Galois theoretic conclusions for these groups beyond the inverse Galois problem, e.g.~so-called Grunwald problems \cite[Corollaire p.~777]{HW2}.
\end{remark}

\begin{remark} 
Another topic that we do not cover in this survey, but which is likely to become increasingly important, is the Hilbert property for algebraic stacks. Recent work has shown that the correct formulation of several arithmetic conjectures requires one to understand thin sets on stacks and not only on varieties; see for example  \cite[Section~9]{DardaYasuda} and \cite[Sections~2.2, 3.7, 10.3 and 10.4]{LoughranSantens}. 
\end{remark}

\subsection{The weak Hilbert property}
Their observation (Theorem \ref{thm:CZ}) motivated Corvaja and Zannier to introduce the following weakening of the Hilbert property in \cite[Section 2.2]{CZ}.
In the following, 
let $K$ always be a field of characteristic zero
and $X$ a normal $K$-variety.
A cover is {\em ramified} if it is not \'etale.

\begin{definition} 
A subset $\Sigma\subseteq X(K)$ is \emph{strongly thin (in $X$)} if there are $n\in\mathbb{N}$
and for each $i\in\{1,\dots,n\}$ a normal $K$-variety $Y_i$ and a ramified cover $\pi_i\colon Y_i\to X$
such that $\Sigma \setminus \bigcup_{i=1}^n \pi_i(Y_i(K))$
is not Zariski-dense in $X$. 
The variety $X$ has the {\em weak Hilbert property (``$X$ has WHP'')} if $X(K)$ is not strongly thin in $X$. 
\end{definition}

Trivially, if $X$ has HP, then $X$ has WHP,
and if $X_{\overline{K}}$ is simply connected (in the sense of Theorem~\ref{thm:CZ}), then the converse holds.
Again, if $X(K)$ is not Zariski-dense in $X$, then $X$ does not have WHP.

\begin{example}\label{ex:E_WHP}
If $K$ is a number field and $X=E$ is an elliptic curve of positive rank, then $E$ has WHP:
If $\pi_i\colon Y_i\rightarrow E$ is a ramified cover, then $Y_i$ has genus at least $2$ by the Riemann--Hurwitz formula,
and therefore $Y_i(K)$ is finite by Faltings's theorem.
In fact, this argument shows that no infinite $\Sigma\subseteq X(K)$ is strongly thin.
See also Section \ref{sec:curves} where we discuss curves in more detail,
and Section \ref{sec:AV} for generalizations to abelian varieties.
\end{example}

We have the following analogue of Remark \ref{rem:HP_birat}:

\begin{proposition}\label{prop:WHP_birat}
If $X$ and $X'$ are smooth proper geometrically connected $K$-varieties with $X\sim X'$,
then $X$ has WHP if and only if $X'$ does.
\end{proposition}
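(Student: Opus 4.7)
The plan is to reduce to the case of a birational morphism between smooth proper varieties and then set up a bijection between ramified covers upstairs and downstairs that preserves non-density of images of rational points.

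First I would use Hironaka's resolution of singularities, applied to the closure of the graph of the birational map $X\dashrightarrow X'$, to produce a smooth proper geometrically connected $K$-variety $W$ equipped with birational morphisms to both $X$ and $X'$. By symmetry, it then suffices to show: for any birational morphism $f\colon W\to X$ between smooth proper geometrically connected $K$-varieties, $W$ has WHP if and only if $X$ does. The key geometric input is that the non-isomorphism locus of $f$ inside $X$ has codimension at least two. Indeed, if the maximal iso-open $U\subseteq X$ had a divisorial complement $D$, the fiber of $f$ over the generic point of $D$ would have positive dimension, so $f^{-1}(D)$ would have dimension $\geq\dim W$, hence would equal $W$, contradicting dominance.

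The correspondence of covers is via normalization in the function field: if $\pi\colon Y\to X$ is a cover, let $\tilde Y$ be the normalization of $W$ in $K(Y)$, giving a cover $\tilde\pi\colon\tilde Y\to W$; conversely, given $\tilde\pi\colon\tilde Y\to W$ one takes the normalization of $X$ in $K(\tilde Y)$. These constructions are mutually inverse. The main technical claim is that $\pi$ is ramified if and only if $\tilde\pi$ is. The implication $\pi$ étale $\Rightarrow$ $\tilde\pi$ étale is immediate since étaleness is stable under base change. Conversely, if $\tilde\pi$ is étale, then $\tilde\pi$ is étale over $V:=f^{-1}(U)$, hence $\pi$ is étale over $U$ via the isomorphism $V\cong U$; by Zariski--Nagata purity of the branch locus (applicable since $X$ is smooth and $Y$ is normal) the branch divisor of $\pi$ is pure of codimension one, so cannot be contained in the codimension $\geq 2$ set $X\setminus U$, hence is empty.

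To conclude, assume WHP$(W)$ and let $\pi_1,\dots,\pi_n\colon Y_i\to X$ be ramified covers; let $\tilde\pi_1,\dots,\tilde\pi_n\colon\tilde Y_i\to W$ be the associated covers, ramified by the above. By WHP$(W)$ the set $\tilde\Sigma:=W(K)\setminus\bigcup_i\tilde\pi_i(\tilde Y_i(K))$ is Zariski-dense in $W$; intersecting with the dense open $V$ still gives a Zariski-dense set, and its image in $U\cong V$ avoids $\bigcup_i\pi_i(Y_i(K))$, since over the isomorphism $V\cong U$ the cover $\tilde\pi_i$ agrees with $\pi_i$, so any $K$-point of $Y_i$ lying over $U$ would lift to a $K$-point of $\tilde Y_i$ lying over $V$. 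Thus this image is a Zariski-dense subset of $X(K)$ avoiding every $\pi_i(Y_i(K))$, proving WHP$(X)$. The reverse implication is entirely analogous. The main obstacle is the correspondence of ramified covers, which hinges on combining purity of the branch locus with the codimension-two property of the exceptional locus; both inputs require $X$ and $W$ to be smooth.
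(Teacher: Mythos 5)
Your proof is correct and is essentially the argument underlying the cited reference \cite[Prop.~3.1]{CDJLZ}: resolve the birational map to a smooth proper model $W$, reduce to a birational morphism $f\colon W\to X$, exploit that $f$ is an isomorphism away from a codimension-$\geq 2$ subset of $X$, set up the bijection between covers via normalization in the function field, and invoke Zariski--Nagata purity (which requires $X$ smooth and $Y$ normal) to see that this bijection preserves ramification. Two small details you slide past are worth filling in. First, ``$\pi$ \'etale $\Rightarrow\tilde\pi$ \'etale is immediate by base change'' presupposes an identification $\tilde Y\cong Y\times_X W$: this holds because $Y\times_X W$ is \'etale over the normal $W$, hence normal, and its generic fiber is $\mathrm{Spec}\,K(Y)$ (a single point, as $K(W)=K(X)$), so it is connected, hence integral, hence equals the normalization of $W$ in $K(Y)$; without this identification ``stable under base change'' proves something about $Y\times_X W$, not about $\tilde Y$. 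Second, the codimension-two claim for $X\setminus U$ needs a word more than ``the fiber over $\eta_D$ would have positive dimension'': one should invoke Zariski's main theorem (applicable since $X$ is normal) to see the fiber over $\eta_D$ is connected, and then note that if it were a single point $f$ would be quasi-finite, hence finite, hence an isomorphism near $\eta_D$ (using $f_*\mathcal{O}_W=\mathcal{O}_X$), contradicting $\eta_D\notin U$; only then does the dimension count produce a contradiction. Both are routine, so the proof stands.
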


\begin{proof}
This is \cite[Prop.~3.1]{CDJLZ}.  
\end{proof}

\begin{remark}\label{rem:WHP_birat}
While it is not known whether the ``proper'' can be dropped from Proposition~\ref{prop:WHP_birat},
an example in \cite{CZ} over $K=\mathbb{Q}$ (cf.~Remark \ref{rem:smooth_necessary}) shows that ``smooth'' cannot be weakened to ``normal'',
cf.~\cite[Remark 7.11]{BJL}.
\end{remark}

If $\mathbb{A}^1_K$ has WHP, then it has HP, and so $K$ is Hilbertian (cf.~Example \ref{ex:HP_implies_Hilbertian}).
In fact, varieties with WHP exist only over Hilbertian fields:

\begin{proposition}\label{prop:WHP_implies_Hilbertian}
The following are equivalent:
\begin{enumerate}
    \item $K$ is Hilbertian.
    \item Every rational $K$-variety has HP.
    \item Some normal $K$-variety of positive dimension has WHP.  
\end{enumerate}
\end{proposition}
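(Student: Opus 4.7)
The easier implications $(1) \Rightarrow (2)$ and $(2) \Rightarrow (3)$ are formal. Indeed, if $K$ is Hilbertian, then the Hilbert property of $\mathbb{A}^n_K$ (Example~\ref{ex:HP_implies_Hilbertian}) together with its birational invariance (Remark~\ref{rem:HP_birat}) gives HP for every rational $K$-variety, proving $(1) \Rightarrow (2)$. For $(2) \Rightarrow (3)$: $\mathbb{A}^1_K$ is a normal rational $K$-variety of positive dimension, so by (2) it has HP, and hence WHP.

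The substantive direction is $(3) \Rightarrow (1)$. Let $X$ be a normal $K$-variety of positive dimension with WHP. By Proposition~\ref{prop:Hilbertian}(4) it suffices to show that for every absolutely irreducible $f \in K[t,x]$ which is separable and of degree $n \geq 2$ in $x$, the set of $\tau \in K$ with $f(\tau,x)$ irreducible in $K[x]$ is infinite. Let $\pi\colon C \to \mathbb{P}^1_K$ be the degree-$n$ normal projective cover associated with $f$, let $\widetilde{C} \to \mathbb{P}^1_K$ be its Galois closure with group $G \leq S_n$, and for each proper subgroup $H < G$ acting intransitively on $G/\mathrm{Stab}(1)$ form the intermediate cover $\rho_H\colon \widetilde{C}/H \to \mathbb{P}^1_K$. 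By the standard Galois-theoretic dictionary (cf.~\cite[Prop.~3.3.1]{Serre}), for $\tau \in K$ outside a finite exceptional set, $f(\tau,x)$ is reducible if and only if $\tau \in \bigcup_H \rho_H((\widetilde{C}/H)(K))$. Since $\mathbb{P}^1_{\overline{K}}$ is simply connected, any connected étale cover of $\mathbb{P}^1_K$ of degree greater than one is of the form $\mathbb{P}^1_L \to \mathbb{P}^1_K$ for some nontrivial finite extension $L/K$ and hence has no $K$-points; such $H$ contribute nothing to the union above and can be discarded. We may therefore assume that every $\rho_H$ in the resulting finite family is ramified.

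Now choose a non-constant rational function on $X$, giving a dominant rational map $\phi\colon X \dashrightarrow \mathbb{P}^1_K$, and form the normalizations of the irreducible components of each fiber product $X \times_{\mathbb{P}^1_K} \widetilde{C}/H$. Each component defines a ramified cover of $X$, since $\phi$ is dominant and each branch locus on $\mathbb{P}^1_K$ is nonempty. Applying WHP of $X$ to this finite family of ramified covers produces a Zariski-dense set of $P \in X(K)$ avoiding the union of their images; a direct fiber-product computation then shows that $\tau := \phi(P)$ satisfies $\tau \notin \rho_H((\widetilde{C}/H)(K))$ for every relevant $H$, so $f(\tau,x)$ is irreducible. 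Dominance of $\phi$ ensures that the resulting set of $\tau \in K$ is Zariski-dense in $\mathbb{P}^1$, in particular infinite.

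The main obstacle is the fiber-product bookkeeping when $\widetilde{C}/H$ fails to be geometrically integral over $K$: one must track its geometrically irreducible components, verify that each pulls back to an honest ramified cover of $X$, and check that avoiding all of them on $K$-points correctly translates to $\phi(P) \notin \rho_H((\widetilde{C}/H)(K))$. Apart from this routine Galois-descent exercise, the proof is a clean conversion of the WHP of $X$ into the classical Hilbert property of the base field $K$ via a single rational function $\phi$.
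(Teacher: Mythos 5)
The implications $(1)\Rightarrow(2)$ and $(2)\Rightarrow(3)$ are correct. The strategy for $(3)\Rightarrow(1)$ --- pull the covers of $\mathbb{P}^1_K$ controlling irreducibility of $f(\tau,x)$ back to $X$ along a dominant rational map $\phi$, and invoke WHP of $X$ --- is a reasonable approach and in the same spirit as the cited reference, but the central step contains a genuine gap.

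You assert that the normalization of each component of $X\times_{\mathbb{P}^1_K}\widetilde{C}/H$ is a \emph{ramified} cover of $X$ ``since $\phi$ is dominant and each branch locus on $\mathbb{P}^1_K$ is nonempty.'' This does not follow. A dominant rational map $\phi\colon X\dashrightarrow\mathbb{P}^1_K$ is defined on a dense open $U\subseteq X$, and $\phi(U)$ contains a dense open $V$ of $\mathbb{P}^1_K$ but need not contain the finite branch locus $D_H$ of $\rho_H$. If $\phi(U)\cap D_H=\emptyset$, the pullback $U\times_{\mathbb{P}^1_K}\widetilde{C}/H\to U$ is \emph{\'etale}, and it (and its extension to $X$) cannot be used in the definition of WHP at all; indeed the whole point of the weak Hilbert property is that \'etale covers are not constrained. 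This is exactly the delicate case: a nontrivial \'etale cover of $X$ may well have Zariski-dense image on $K$-points, and then your chosen $P$ could map to $\rho_H((\widetilde C/H)(K))$ even after applying WHP. The gap is fixable --- since $X(K)$ is Zariski-dense in the positive-dimensional $X$, the field $K$ is infinite, and one can replace $\phi$ by $\mu\circ\phi$ for a suitable $\mu\in\mathrm{PGL}_2(K)$ forcing $\phi^{-1}(D_H)\neq\emptyset$ for every relevant $H$ --- but this must be argued, not asserted.

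Two further points deserve more than the ``routine Galois-descent exercise'' label. First, even when the fiber product $U\times_{\mathbb{P}^1_K}\widetilde{C}/H\to U$ is ramified, an individual irreducible component can still be \'etale over $U$, so you need to justify why each component you retain is a ramified cover (or why the \'etale ones can be discarded). Second, the correspondence between $f(\tau,x)$ being reducible and $\tau$ lying in $\bigcup_H\rho_H((\widetilde C/H)(K))$ pulls back cleanly to $X$ only if $K(X)$ is linearly disjoint from $K(\widetilde C)$ over $K(t)$ via $\phi$; otherwise the pulled-back covers can collapse in degree (in the extreme case becoming isomorphisms), and the argument that avoiding their images forces $\phi(P)\notin\rho_H((\widetilde C/H)(K))$ breaks down. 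Choosing $\phi$ generically handles this as well, but again it needs to be stated and verified rather than deferred to ``bookkeeping.''
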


\begin{proof}
See \cite[Proposition 5.5]{BFP24}.
The weaker statement with HP instead of WHP 
appears already in \cite[p.~20, Exercise 1]{Serre}.
\end{proof}

In fact, unramified covers were the only serious obstruction to the Hilbert property that Corvaja and Zannier saw: 

\begin{question}[{\cite[Question-Conjecture 2]{CZ}}]\label{conj:CZ2}
Does every smooth projective variety $X$ over a number field $K$
with  $X(K)$ Zariski-dense in $X$
have WHP?
\end{question}

\begin{remark}\label{rem:CZ2}
In fact, \cite{CZ} ask the question with $X$ only required to be normal, but then the answer is negative,
as Remark \ref{rem:WHP_birat} shows.
As evidence that the answer to Question \ref{conj:CZ2} might be positive, Corvaja and Zannier mention the case of curves
(where it holds, see Example~\ref{ex:E_WHP} and Section~\ref{sec:curves}),
conjectures of Vojta, and Manin's conjecture in the case of Fano varieties.
Like with Question \ref{conj:CZ1}, they suggest that the answer might be positive only after allowing a finite extension of the base field.
We discuss this in more detail in Section \ref{sec:denseness}.
\end{remark}

\begin{remark}
The weak Hilbert property was first defined in \cite{CZ}, 
but at least special cases  occurred implicitly already in 
\cite{FZ,CorvajaAlgebraicGroups,Zannier}. 
The term ``strongly thin'' was introduced later in \cite{BFP24,Luger2}.
Corvaja and Zannier \cite[p.~582]{CZ} make the claim that WHP
``would admit in principle much the same applications as the HP'',
whose validity has yet to be explored. 
\end{remark}

\subsection{Integral Hilbert properties}
\label{sec:integral}
There are several variants of HP and WHP in the literature, both explicit and implicit, that refer to integral rather than rational points.
We will mention such results only occasionally,
and the reader interested mainly in HP and WHP may safely skip this subsection.
To talk about integral points, we need to work with models of our varieties over $\mathbb{Z}$ or whatever the ring of ``integers'' of $K$ should be:

Let $R$ be an integral domain with fraction field $K$ and let $X$ be a quasi-projective variety over $K$. A \emph{model for $X$ over $R$} is a quasi-projective flat integral $R$-scheme $\mathcal{X}$ together with an isomorphism $\mathcal{X}_K\cong X$ over $K$.  
We will usually omit the isomorphism from our notation. 
If $\mathcal{X}$ is a model for $X$ over $R$, then $\mathcal{X}(R)$ is naturally a subset of $X(K)$.

\begin{example} 
Assume $X=\mathrm{Spec}( K[x_1,\ldots,x_n]/(f_1,\ldots,f_r))\subseteq \mathbb{A}^n_K$ is the zero locus of a finite collection of polynomials   $f_1,\ldots, f_r\in R[x_1,\dots,x_n]$ with $(f_1,\ldots,f_r)$ prime in $R[x_1,\ldots,x_n]$.  
Then  $\mathcal{X} = \mathrm{Spec}(R[x_1,\ldots,x_n]/( f_1,\ldots,  f_r))$ is a model for $X$ over $R$. 
\end{example}

If $K$ is a number field and $S$ a finite set of finite places of $K$,
we denote by $\mathcal{O}_{K,S}$ the ring of $S$-integers of $K$.
In this setting,
if $X$ is a variety over $K$
and $\mathcal{X}$ is  a  fixed model of $X$ over $\mathcal{O}_{K,S}$,
\cite{Coccia}
defines $X$  to have the {\em (weak) $S$-integral Hilbert property }
(IHP/WIHP) if $\mathcal{X}(\mathcal{O}_{K,S})$ is not (strongly) thin in $X$.
A more general definition can be found in \cite{Luger},
which would in this situation say that $\mathcal{X}$ has the (weak) Hilbert property over $\mathcal{O}_{K,S}$.  

\begin{remark} 
If $R$ is a regular noetherian integral domain with fraction field $K$ and $\mathcal{X}$ is a model for $X$ over $R$,
\cite{Luger} defines $\mathcal{X}$ to have the Hilbert property over $R$
if the set of near-$R$-integral points on $\mathcal{X}$ is not thin in $X(K)$,
where a $K$-point $P\in X(K)$ is {\em near-$R$-integral} (as defined by Vojta \cite[\S4]{Vojta}) if there is an open subset $U\subseteq \mathrm{Spec}(R)$ with complement of codimension at least two and a morphism $U\to \mathcal{X}$ extending $P$. 
However, if $\dim R = 1$ (e.g., $R= \mathcal{O}_{K,S}$ for $K$ a number field), then every near-$R$-integral point is an $R$-point,
and we will not discuss results on near-integral points in this survey. 
\end{remark}

\begin{example}
Let $X=\mathbb{A}^1_\mathbb{Q}$ and $X'=\mathbb{A}^1_\mathbb{Q}\setminus\{0\}$,
with models
$\mathcal{X}={\rm Spec}(\mathbb{Z}[t])$ respectively
$\mathcal{X}'={\rm Spec}(\mathbb{Z}[t,t^{-1}])$ over $\mathbb{Z}$.
By Hilbert's irreducibility theorem (Remark \ref{rem:Hilbert}),  $X(\mathbb{Z})$ is not thin in $X$,
but
 $X'(\mathbb{Z})=\{1,-1\}$ is thin in $X'$ (equivalently, in $X$).
Similarly,
if $K$ is a number field and $S$ a finite set of finite places of $K$, then $X'(\mathcal{O}_{K,S})=\mathcal{O}_{K,S}^\times$ is thin in $X'_K$ and strongly thin in $X_K$,
by Dirichlet's unit theorem (cf.~the argument in Example \ref{ex:abelian_var_not_HP}).
For example,
every element of
 $\mathcal{O}_{\mathbb{Q},\{2\}}^\times=\mathbb{Z}[\frac{1}{2}]^\times\subseteq\mathbb{Q}^\times$
is in the image of $X'(\mathbb{Q})$ under one of 
the four ramified covers $X\to X$ given by $z\mapsto z^2$, $z\mapsto -z^2$, $z\mapsto 2z^2$, and $z\mapsto -2z^2$.
However, $X'(\mathcal{O}_{K,S})$ is not always strongly thin in $X'_K$:  
\end{example}

\begin{proposition}\label{thm:G_m}
    Let $K$ be a number field and let $S$ be a finite set of finite places of $K$. If  $\mathcal{O}_{K,S}^\times$ is infinite, then   $\mathcal{O}_{K,S}^\times$ is not strongly thin in  $\mathbb{A}^1_{K}\setminus \{0\}$. 
\end{proposition}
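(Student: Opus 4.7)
The plan is to show that for each individual ramified normal cover $\pi \colon Y \to \mathbb{A}^1_K \setminus \{0\}$ appearing in a hypothetical strong-thinness witness, the intersection $\pi(Y(K)) \cap \mathcal{O}_{K,S}^\times$ is finite. Since strong thinness involves only finitely many such covers, the union of finitely many finite sets misses all but finitely many $S$-units, and the remaining infinite subset of $\mathcal{O}_{K,S}^\times$ is automatically Zariski dense in the one-dimensional variety $\mathbb{A}^1_K \setminus \{0\}$.

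The easy case is when $Y$ is not geometrically integral. Then $\mathrm{Gal}(\overline{K}/K)$ acts transitively on the (at least two) absolute components of $Y_{\overline{K}}$, so $Y(K)$ is forced into the intersection of all such components, a proper and hence $0$-dimensional closed subvariety of the curve $Y$; thus $Y(K)$ itself is finite.

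So assume $Y$ is geometrically integral; normality in dimension one makes it smooth. Let $\tilde Y$ be its smooth projective completion, and extend $\pi$ to $\tilde\pi \colon \tilde Y \to \mathbb{P}^1_K$. The hypothesis that $\pi$ is ramified (as opposed to just $\tilde\pi$) forces $\tilde\pi$ to ramify above at least one point of $\mathbb{A}^1_K \setminus \{0\}$. A short Riemann--Hurwitz computation, separating the ramification contributions over $\{0,\infty\}$ from those inside $\mathbb{A}^1_K \setminus \{0\}$, then gives
\[
2g(\tilde Y) - 2 + \bigl|\tilde\pi^{-1}(\{0,\infty\})(\overline{K})\bigr| \ \geq\ 1,
\]
equivalently $\chi(Y_{\overline{K}}) \leq -1$. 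In particular $\tilde Y$ has genus at least $1$, or genus $0$ with at least three geometric points at infinity --- precisely the hypothesis needed for Siegel's theorem on integral points.

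The remaining step is to verify that $K$-rational points of $Y$ whose image lies in $\mathcal{O}_{K,S}^\times$ are in fact $S'$-integral on a suitable model, for some finite $S' \supseteq S$. I would enlarge $S$ enough that $Y$ and $\pi$ spread out to a flat finite morphism $\mathcal{Y} \to \mathrm{Spec}(\mathcal{O}_{K,S'}[t, t^{-1}])$. Because finite morphisms are integral and $\mathcal{O}_{K,S'}$ is integrally closed in $K$, any $y \in Y(K)$ with $\pi(y) \in \mathcal{O}_{K,S}^\times \subseteq \mathcal{O}_{K,S'}^\times$ automatically extends to a point of $\mathcal{Y}(\mathcal{O}_{K,S'})$. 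Siegel's theorem then yields that this last set is finite, so $\pi(Y(K)) \cap \mathcal{O}_{K,S}^\times$ is finite, completing the argument.

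The main obstacle I anticipate is this final integrality reduction: conceptually it merely says that an $S$-integer pulled back through a finite map becomes $S'$-integral for some $S' \supseteq S$, but making it precise requires a careful choice of model and of $S'$ large enough to absorb the denominators occurring in the integrality equations for $\pi$. The Riemann--Hurwitz step is routine once one correctly accounts for how the preimages of $0$ and $\infty$ become punctures on $Y$.
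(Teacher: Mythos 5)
Your proof is correct and follows essentially the same strategy as the paper's: compactify the cover, run Riemann--Hurwitz to show that either the genus is at least one or there are at least three punctures, and then invoke Siegel's theorem (in its $S$-integer form, i.e.\ Mahler's generalization) after verifying that preimages of $S$-units are $S'$-integral on a spread-out model. You are somewhat more explicit than the paper about two steps the paper leaves implicit --- disposing of the non-geometrically-integral case and justifying the integrality reduction via finiteness of the spread-out morphism --- but these are refinements, not a different route; the paper additionally notes an alternative Siegel-free argument via Zannier's theorem for cyclic subgroups, which you do not need.
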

\begin{proof}
    This is similar to the proof of Example \ref{ex:E_WHP}:
    If $\pi\colon C\to \mathbb{A}^1_K\setminus \{0\}$ is a  ramified cover with $C$ a smooth curve,
    let $\overline{C}$ be a smooth projective curve containing $C$ of genus $g_{\overline{C}}$, and let $\overline{\pi}\colon \overline{C}\to \mathbb{P}^1_K$ be the unique extension of $\pi$. 
    Applying the Riemann--Hurwitz formula to $\overline{\pi}$, we see that $2g_{\overline{C}} -2 + \#(\overline{C}_{\overline{K}}\setminus C_{\overline{K}}) >0$,
 so Siegel's theorem (more precisely Mahler's generalization thereof, cf.~\cite[Theorem 1]{CZ02}), gives that  any model of $C$ over $\mathcal{O}_{K,S}$ has only finitely many $\mathcal{O}_{K,S}$-points. 
 This shows that 
 if $\mathcal{O}_{K,S}^\times$ is infinite, then it  is not strongly thin. 
 In fact, $\mathcal{O}_{K,S}^{\times}$ contains an infinite cyclic subgroup $\Gamma$, 
 and already $\Gamma$ is not strongly thin in $\mathbb{A}^1_K\setminus\{0\}$
 by
 \cite[Theorem~4]{Zannier}
 (cf.\ Theorem~  \ref{thm:alg_groups_fingen} below),
 which does not use Siegel's theorem. 
\end{proof}

\subsection[Potential density of rational points]{Potential density\footnote{or {\em denseness}, emphasizing that this is not meant in a quantitative sense} of rational points}
\label{sec:denseness}
Let $K$ be a field of characteristic zero and $X$ a normal and geometrically integral $K$-variety.
A positive answer to Question \ref{conj:CZ2} would mean that
if $K$ is a number field, $X$ is smooth and projective, and $X(K)$ is Zariski-dense in $X$, then $X$ has WHP, but Corvaja and Zannier considered the possibility (cf.~Remark~\ref{rem:CZ2}) that this might hold only potentially, in the following sense: 

\begin{definition}
The $K$-variety $X$  has
{\em potential HP}, respectively {\em potential WHP},
if $X_L$ has HP, respectively WHP, for some finite field extension $L/K$. 
\end{definition}

So in the case where $K$ is a number field,
a positive answer to this weaker version of Question \ref{conj:CZ2} would offer a precise prediction when $X$ should have potential WHP,
namely if and only if $X(L)$ is Zariski-dense in $X$ for some finite extension $L/K$.
We now discuss for which $X$ the latter condition is expected to be satisfied.
Results and conjectures in this subsection will only be used to motivate some questions and results later on.

Recall that a projective variety $X$ over $K$ is of \emph{general type} if there is a resolution of singularities $Y\to X$ such that the  
canonical divisor $K_Y$ 
is big. 
Equivalently, $X$ is of maximal Kodaira dimension $\kappa(X)={\rm dim}(X)$. 
Lang's conjecture \cite[Conjecture~5.7]{Lang_conjectures} predicts that varieties of general type over number fields have few rational points,
and therefore do not have HP:
\begin{conjecture}\label{conjecture:lang_mordellic}
A projective variety $X$ over a number field $K$ is of general type
if and only if there is a proper closed subset $\Delta\subsetneq X_{\overline{K}}$ such that $X(L)\setminus \Delta$ is finite for every finite field extension $L/K$.
\end{conjecture}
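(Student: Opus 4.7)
The plan is to split the conjecture into its two implications, both of which are open in general. The forward direction is the classical Bombieri--Lang conjecture: if $X$ is of general type, then $X(L)\setminus\Delta$ is finite for every finite extension $L/K$ and a suitable proper closed subset $\Delta\subsetneq X_{\overline{K}}$. The backward direction is equivalent to the potential density conjecture for non-general-type varieties: if $X$ is not of general type, then $X(L)$ is Zariski-dense in $X$ for some finite $L/K$, ruling out the existence of any such $\Delta$. Neither direction can be attempted in isolation without invoking conjectures strictly stronger than what is currently known, so the sketch below is in essence a reduction to Vojta's conjectures on the one hand and to Campana's conjectures on the other.

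For the converse direction I would reduce to smooth projective $X$ by a resolution of singularities, then run an induction on $\dim X$ guided by the Iitaka fibration. If $\kappa(X)=-\infty$, the abundance conjecture predicts that $X$ is uniruled, so after a finite extension one covers $X$ by rational curves parametrised by a $K'$-variety with dense rational points, yielding potential density. If $0\le\kappa(X)<\dim X$ one analyses the Iitaka fibration $X\dashrightarrow Z$, whose generic fibre $F$ satisfies $\kappa(F)=0$; one then appeals to the conjectural potential density of $\kappa=0$ varieties (abelian, Calabi--Yau, hyperk\"ahler) on the fibres and to the inductive hypothesis on $Z$, combining them via a section-lifting argument of Campana type to produce a dense set of rational points on $X_L$.

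For the forward direction the only unconditional evidence is Faltings's theorem on closed subvarieties of abelian varieties, where $\Delta$ can be taken to be the Ueno--Kawamata locus formed by translates of positive-dimensional abelian subvarieties contained in $X$. In the general case the cleanest conditional approach is to derive the statement from Vojta's height inequality applied to a desingularisation $Y\to X$ with $K_Y$ big: bigness allows one to dominate a multiple of a very ample height by the $K_Y$-height outside the augmented base locus $\mathbf{B}_+(K_Y)$, forcing the rational points of bounded height to cluster in $\mathbf{B}_+(K_Y)$, so that one sets $\Delta$ to be the image of that locus in $X_{\overline{K}}$. The main obstacle, and the reason the conjecture is completely open outside the abelian setting, is that there is currently no mechanism in Diophantine geometry that converts the positivity of the canonical bundle directly into finiteness of rational points without the intervention of an ambient group variety, so any serious attack on the forward direction would have to develop an entirely new arithmetic technique keyed to bigness of $K_X$.
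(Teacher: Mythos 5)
This statement is labeled as a \emph{conjecture} in the paper and is attributed to Lang (\cite[Conjecture 5.7]{Lang_conjectures}); the paper neither proves it nor claims a proof exists, so there is no argument to compare yours against. You correctly recognize this: your ``proposal'' is not a proof but an honest acknowledgment that both implications are wide open, together with a reduction of each to other standard conjectures (Bombieri--Lang/Vojta in one direction, potential density of non-general-type varieties in the other). That is an accurate assessment of the state of the art, and the conditional reductions you sketch (Iitaka fibration plus potential density of $\kappa=0$ fibres for the converse, Vojta's height inequality plus the augmented base locus $\mathbf{B}_+(K_Y)$ for the forward direction, with Faltings's theorem as the only unconditional case) are the right reference points.

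The only caveat worth flagging is that one should be careful not to present a reduction-to-conjectures as a proof. If this were intended as a proof, the gap would be total: neither direction is known outside the setting of subvarieties of abelian varieties (Faltings). Since you explicitly disclaim this, no mathematical error is present; just be sure the final write-up labels the statement as a conjecture rather than a theorem, exactly as the paper does.
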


 In \cite[Definition~2.1]{Ca04} (see also \cite[Definition~8.1]{Ca11} or \cite[Definition~1.2]{BJL}) Campana introduced the class of {\em special} varieties 
and he conjectured that the arithmetic properties of 
 special varieties are  opposite to those conjectured by Lang for varieties of general type; see \cite[Conjecture~13.23]{Ca11}.  Combined with the expected positive answer to the weakened version (see Remark \ref{rem:CZ2}) of 
Question~\ref{conj:CZ2},
extended to finitely generated fields,
this gives:

\begin{conjecture}\label{conj:campana_corvaja_zannier}
If $K$ is finitely generated and $X$ smooth projective, 
the following are equivalent.
\begin{enumerate}
    \item $X$ has potential WHP.
    \item There is a finite field extension $L/K$ such that $X(L)$ is Zariski-dense in $X$.
    \item $X$ is special.
\end{enumerate}
\end{conjecture}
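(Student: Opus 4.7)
The plan is to split the equivalence into three implications and treat each one separately. The implication (1)$\Rightarrow$(2) is immediate from the definitions: if $X_L$ has WHP then $X(L)$ is not strongly thin in $X_L$, and in particular it must be Zariski-dense in $X_L$, since otherwise the empty union of covers ($n=0$) would already witness it to be strongly thin. So no input beyond the definition is needed.

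For (2)$\Rightarrow$(3), I would argue by contrapositive, relying on Campana's core map $c_X\colon X\to c(X)$, whose general fibres are special and whose base carries a natural orbifold structure of general type. If $X$ is not special, then $c(X)$ has positive dimension and is of orbifold general type. The orbifold refinement of Lang's conjecture~\ref{conjecture:lang_mordellic}, applied to $c(X)$, would then force the $L$-rational points of the base to be contained in a proper closed subset for every finite extension $L/K$. Pulling back through $c_X$ yields a proper closed subset of $X_L$ containing the image of $X(L)$, contradicting (2). The conditional input here is Lang's conjecture together with its orbifold refinement.

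The implication (3)$\Rightarrow$(1) is the substantive direction: it is a strengthening of Campana's arithmetic conjecture (which only predicts (3)$\Rightarrow$(2)) by requiring the rational points to avoid every strongly thin set after a finite extension. My approach would be inductive on $\dim X$, along the structure theory of special varieties as towers of Campana fibrations whose general fibres are either rationally connected, of Kodaira dimension zero, or orbifold-special of lower dimension. For rationally connected fibres one expects HP conditionally on WWA (Conjecture~\ref{conj:WWA}); for Kodaira dimension zero, the two principal cases are abelian varieties, where potential WHP after base change follows from Mordell--Weil combined with the Riemann--Hurwitz argument of Example~\ref{ex:E_WHP} (extended as in Section~\ref{sec:AV}), and simply connected Calabi--Yau varieties, where Theorem~\ref{thm:CZ} indicates that WHP should coincide with HP once the unramified obstructions have been trivialised by a finite base change. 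The key missing ingredient is a fibration principle for WHP: if the base of a smooth projective fibration has potential WHP and a Zariski-dense set of fibres also have potential WHP, then so should the total space.

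The main obstacle is that (3)$\Rightarrow$(1) already contains Campana's potential density conjecture as a special case, and this is open even for simply connected Calabi--Yau threefolds. Even granting potential density, the upgrade to potential WHP demands a robust fibration principle (cf.\ Section~\ref{sec:preservation}) that is presently known only under restrictive hypotheses on the monodromy and the availability of sections. A realistic attack on the conjecture should therefore proceed conditionally on Lang's conjecture and on Campana's density conjecture, verifying the WHP strengthening case by case along the stratification by Kodaira dimension, roughly in the spirit of Sections~\ref{sec:curves}--\ref{sec:higher_dim}.
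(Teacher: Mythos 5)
The statement you were asked to prove is a conjecture, not a theorem; the paper does not prove it and no proof is known, so there is nothing to compare your argument against. What you have written is correctly a conditional roadmap, and it matches the paper's own framing: the paper introduces Conjecture~\ref{conj:campana_corvaja_zannier} precisely as the concatenation of Campana's arithmetic conjecture (giving $(2)\Leftrightarrow(3)$) with the weakened Corvaja--Zannier Question~\ref{conj:CZ2}, extended to finitely generated fields (giving $(1)\Leftrightarrow(2)$). Your only unconditional step, $(1)\Rightarrow(2)$, is correct as stated: if $X(L)$ is not Zariski-dense then the empty collection of ramified covers already witnesses it as strongly thin, so $X_L$ cannot have WHP. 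Your reduction of $(2)\Rightarrow(3)$ to an orbifold refinement of Lang's conjecture applied to the Campana core is the expected route; one should just note that the core is only almost holomorphic and defined on a smooth birational model, but Proposition~\ref{prop:WHP_birat} makes the relevant notions birationally invariant on smooth proper models, so this is harmless. Your identification of $(3)\Rightarrow(1)$ as the genuinely open direction, requiring both Campana's potential-density conjecture and a strong fibration principle for WHP, is also correct.

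Two small caveats worth recording. First, when you invoke Conjecture~\ref{conj:WWA} (WWA implies HP) for the rationally connected pieces, recall that this mechanism is specific to number fields, since the proof passes through Lang--Weil; for general finitely generated $K$ one would need a substitute, and it is not at all clear what that substitute should be. Second, the building blocks in Kodaira dimension zero are not quite just ``abelian'' and ``simply connected Calabi--Yau'': even conjecturally, one faces hyperk\"ahler factors and nontrivial products, and the step from potential density to potential WHP is open already for simply connected K3 surfaces of Picard rank one, as the paper itself notes. None of this affects the correctness of your assessment, but it shows that $(3)\Rightarrow(1)$ is open not only in depth but also in breadth.
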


We will not make use of the precise definition, but for completeness we include a definition for projective varieties: If $X$ is projective over a field $K$ of characteristic zero, it is  
special if and only if there is a proper birational surjective morphism $Y \to X$ with $Y$ a smooth projective variety over $K$ with no Bogomolov sheaves,
where a line bundle $\mathcal{L}$ on $Y$ is a \emph{Bogomolov sheaf (for $Y$)} if there is an integer $1 \leq p \leq \dim Y$ such that there is a nonzero morphism $\mathcal{L} \to \Lambda^p \Omega^1_Y$ and the Iitaka dimension $\kappa(\mathcal{L})$ of $\mathcal{L}$,
 which is always at most $p$ \cite[\S 12, Theorem 4]{BogomolovHol},
 is equal to $p$.
This notion is
independent of the base field:  $X$ is special if and only if $X_L$ is special, where $L/K$ is any field extension \cite{BartschJavanpeykar}. 

\begin{example}\label{ex:special}
Examples of special varieties 
include algebraic groups \cite[Lemma~2.11]{Bartsch}, smooth projective (geometrically) rationally connected varieties (e.g., Fano varieties, cf.~Section~\ref{sec:Fano}) \cite[Corollary~2.28]{Ca04},  varieties with Kodaira dimension zero (e.g., hyperk\"ahler varieties and Calabi--Yau varieties) \cite[Theorem~5.1]{Ca04}. In particular, curves of genus at most one are special, but also  K3 surfaces, Enriques surfaces, and abelian varieties are special (as they have Kodaira dimension zero). Finally,  smooth hypersurfaces of degree at most $n+1$ in $\mathbb{P}^n$ are either Fano varieties or have Kodaira dimension zero, and are thus special. 
\end{example}

Campana established several properties of his class of special varieties.    
We record some of  his results here.

\begin{proposition}\label{thm:special} 
    Let $X$ and $Y$ be projective geometrically integral $K$-varieties.
    \begin{enumerate}
        \item If $X$ is special and $Y$ is birationally equivalent to $X$, then $Y$ is special.
        \item  If $X\to Y$ is a surjective morphism and $X$ is special, then $Y$ is special.
        \item   If $X\to Y$ is an \'etale cover, then $X$ is special if and only if $Y$ is special.
        \item If $X$ and $Y$ are special, then $X\times Y$ is special.
    \end{enumerate}
\end{proposition}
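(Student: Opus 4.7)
The plan is to pass, throughout, to smooth projective birational representatives and translate each statement into a claim about the (non-)existence of Bogomolov sheaves. The key tool used repeatedly is the Bogomolov--Sommese bound $\kappa(\mathcal{L})\le p$ for any line subsheaf $\mathcal{L}\hookrightarrow \Lambda^p\Omega^1_Z$ on a smooth projective $Z$, which is built into the definition; combined with a lower bound $\kappa(\mathcal{L})\ge p$ it forces equality and exhibits $\mathcal{L}$ as a Bogomolov sheaf.

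For (1), one first checks that the existence of a Bogomolov sheaf is itself a birational invariant of smooth projective varieties: any birational morphism $f\colon Z'\to Z$ between such varieties induces isomorphisms on the relevant spaces $H^0(Z,\mathrm{Sym}^m(\Lambda^p\Omega^1_Z)\otimes\mathcal{L}^{-m})$ for all $m$, so the criterion ``$\kappa(\mathcal{L})=p$'' is intrinsic to the birational class. Hence specialness depends only on the birational equivalence class of $X$.

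For (2), choose smooth projective models $X',Y'$ and, after resolving indeterminacies, a morphism $\tilde f\colon X'\to Y'$. If $\mathcal{L}\hookrightarrow \Lambda^p\Omega^1_{Y'}$ is a Bogomolov sheaf, generic smoothness makes $\tilde f^*\Omega^1_{Y'}\to \Omega^1_{X'}$ generically injective, so pulling back produces a nonzero map $\tilde f^*\mathcal{L}\to \Lambda^p\Omega^1_{X'}$; surjectivity of $\tilde f$ gives $\kappa(\tilde f^*\mathcal{L})\ge \kappa(\mathcal{L})=p$, and Bogomolov--Sommese forces equality, yielding a Bogomolov sheaf on $X'$ and contradicting specialness of $X$. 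The forward direction of (3) is the special case of (2) in which $\pi$ is already a finite \'etale map. For (4), use the decomposition $\Omega^1_{X\times Y}=p_X^*\Omega^1_X\oplus p_Y^*\Omega^1_Y$ and the induced splitting of $\Lambda^p\Omega^1_{X\times Y}$ into summands $p_X^*\Lambda^a\Omega^1_X\otimes p_Y^*\Lambda^b\Omega^1_Y$ with $a+b=p$: a putative Bogomolov sheaf on $X\times Y$ has a nonzero component in some summand, and restricting to a general fiber of one projection together with an Iitaka-dimension argument for families yields a Bogomolov sheaf on one of the factors.

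The remaining backward direction of (3) is the main technical obstacle. Given a Bogomolov sheaf $\mathcal{L}\hookrightarrow \Lambda^p\Omega^1_X=\pi^*\Lambda^p\Omega^1_Y$ -- where the equality uses \'etaleness crucially -- adjunction produces a nonzero map $\pi_*\mathcal{L}\to \Lambda^p\Omega^1_Y$. Choose a line subsheaf $\mathcal{N}\subseteq \pi_*\mathcal{L}$ on which this map is still nonzero; then $\kappa(\mathcal{N})\le p$ automatically, and the content is to show $\kappa(\mathcal{N})\ge p$. One passes to the Galois closure, uses the decomposition $\pi^*\pi_*\mathcal{L}=\bigoplus_{g\in G}g^*\mathcal{L}$, and exploits the finite-\'etaleness identity $\kappa(\pi^*\mathcal{N})=\kappa(\mathcal{N})$ to transfer Iitaka-dimension information back to $Y$. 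Making this bookkeeping precise -- in particular securing the requisite lower bound on $\kappa(\mathcal{N})$ -- is the delicate part, and is where I would consult Campana's original argument.
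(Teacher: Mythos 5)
The paper itself gives no argument in the sense you are attempting: it passes to a common resolution $Z\to X$, $Z\to Y$ and delegates parts (1)--(3) to \cite[Lemma~2.8]{BJL} and part (4) to \cite[Lemma~2.10]{Bartsch}. Your self-contained argument from the Bogomolov-sheaf definition is therefore a genuinely different route, and for parts (1), (2) and the forward direction of (3) it is essentially correct: birational invariance of $H^0$ of tensor powers of $\Omega^1$ on smooth projective models is the classical fact underlying (1); for (2) the resolution of indeterminacies, the generic-smoothness nonvanishing of the composite $\tilde f^{*}\mathcal{L}\to\Lambda^{p}\Omega^{1}_{X'}$, and the inequality $\kappa(\tilde f^{*}\mathcal{L})\ge\kappa(\mathcal{L})$ for surjective $\tilde f$ together with Bogomolov--Sommese upstairs give the contrapositive; and (3)$\Rightarrow$ is a special case.

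The remaining two claims have genuine gaps. For the backward direction of (3) you correctly flag that you cannot finish, and indeed the sketch does not close: $\pi_{*}\mathcal{L}$ has rank $\deg\pi>1$, there is no canonical line subsheaf $\mathcal{N}\subseteq\pi_{*}\mathcal{L}$ to pick, and nothing so far controls $\kappa(\mathcal{N})$. Even passing to the Galois closure and forming the norm-type bundle $\bigotimes_{g\in G}g^{*}\mathcal{L}$, one must still show that the descended bundle on $Y$ hits the Bogomolov--Sommese bound with respect to $\Lambda^{p}\Omega^{1}_{Y}$ rather than merely a tensor power of it. This direction is a nontrivial theorem in Campana's theory.

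For (4), the last step is not a routine verification: given a Bogomolov sheaf $\mathcal{L}$ on $X\times Y$ whose projection to a K\"unneth summand $p_{X}^{*}\Lambda^{a}\Omega^{1}_{X}\otimes p_{Y}^{*}\Lambda^{b}\Omega^{1}_{Y}$ (with $a+b=p$) is nonzero, restricting to a general $p_{Y}$-fiber $X\times\{y\}$ gives a nonzero map $\mathcal{L}|_{X\times\{y\}}\to\Lambda^{a}\Omega^{1}_{X}\otimes\Lambda^{b}T^{*}_{y}Y$, and hence a nonzero map to $\Lambda^{a}\Omega^{1}_{X}$ after projection. But easy addition for $p_{Y}$ only yields $\kappa(\mathcal{L}|_{X\times\{y\}})\ge p-\dim Y=a-(\dim Y-b)$, which is in general strictly less than $a$; so the restriction need not be a Bogomolov sheaf on $X$, and the argument does not produce the desired contradiction. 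Closedness of specialness under products is in fact one of the nontrivial results in Campana's framework, and the cited \cite[Lemma~2.10]{Bartsch} does not reduce it to this elementary restriction argument.
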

\begin{proof} 
For (1), choose  proper birational surjective morphisms $Z\to X$ and $Z\to Y$ with $Z$ a projective variety. Then $X$ is special if and only if $Z$ is special if and only if $Y$ is special by \cite[Lemma~2.8(2)]{BJL}. For (2) and (3) see \cite[Lemma~2.8(1,3)]{BJL}.  Finally, $(4)$ is well-known and follows from the more general \cite[Lemma~2.10]{Bartsch}. 
\end{proof}

By Conjecture \ref{conj:campana_corvaja_zannier}, the property ``$X$ has potential WHP'' should  behave similarly to the property    ``$X$ is special'' over finitely generated fields of characteristic zero. In particular, they should have the same formal properties (e.g., being closed under products),
and we record in Section \ref{sec:preservation} what is known in this direction. 
We also expect that some of these properties persist over (arbitrary!) Hilbertian fields, and therefore ask the following question about special varieties over Hilbertian fields.

\begin{question}\label{q:special_Hilbertian}
Does every smooth projective special variety $X$ over a Hilbertian field $K$ of characteristic zero
have potential WHP?
\end{question}

\section{Preservation theorems}
\label{sec:preservation}

\noindent
In this section we discuss the preservation of Hilbert properties
under various algebraic and geometric constructions.
We always let $K$ be a field of characteristic zero.

\subsection{Base change}
\label{sec:basechange}

Already Hilbert observed that his theorem for $\mathbb{Q}$
implies the same statement over arbitrary number fields.  In fact, any finite extension of a Hilbertian field is again Hilbertian.
Similarly, HP and WHP are preserved under finite field extensions.  
We now give some examples of what is known regarding infinite extensions (which includes references for the case of finite extensions).
Let 
$X$ be a normal $K$-variety and let
$L/K$ be a field extension.

\begin{theorem}\label{thm:permanence}
If $K$ is Hilbertian, then so is its extension $L$,
whenever one of the following conditions holds:
\begin{enumerate}
    \item $L/K$ is finitely generated.
    \item $L/K$ is small, i.e.~$L/K$ is algebraic and for every $n\in\mathbb{N}$ there are only finitely many intermediate fields $K\subseteq F\subseteq L$ with $[F:K]\leq n$.
    \item There are Galois extensions $M_1/K$ and $M_2/K$ such that
    $L\subseteq M_1M_2$, $L\not\subseteq M_1$, $L\not\subseteq M_2$.
     \item There are $m\in\mathbb{N}$ and fields $K=K_0\subseteq \dots\subseteq K_m$ with $L\subseteq K_m$
    such that for each $i$, $K_i/K_{i-1}$ is Galois with Galois group either abelian 
    or a (possibly infinite) product of finite simple groups.
    \item  
    $L/K$ is algebraic and there is $n\in\mathbb{N}$ and for each prime number $\ell$ a Galois representation (i.e. a continuous homomorphism)
    $\rho_\ell:{\rm Gal}(\overline{K}/K)\rightarrow{\rm GL}_n(\overline{\mathbb{Q}}_\ell)$  such that
    $L$ is fixed by $\bigcap_{\ell}{\rm ker}(\rho_\ell)$.
\end{enumerate}
\end{theorem}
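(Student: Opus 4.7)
The plan is to handle each case by reducing to an established structural theorem on preservation of Hilbertianity; the five items draw on progressively deeper results and no single argument unifies them. A common ingredient is the classical fact that every finite separable extension of a Hilbertian field is Hilbertian (see \cite[Corollary 12.2.3]{FJ}), which I would use freely in combination with the arguments below.

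For (1), I would split $L/K$ into a purely transcendental part followed by a finite part. The transcendental piece is handled by Theorem \ref{thm:Hilbertian}(2), since a finitely generated transcendental extension of any field is itself Hilbertian; the finite piece is then handled by the classical result just cited. For (2), concerning small algebraic extensions, I would appeal to a theorem of Weissauer (see \cite[Chapter 16]{FJ} or \cite{Weissauer}), whose proof is a counting argument bounding, in terms of the hypothesis on $L/K$, the specializations of a given polynomial that become reducible after passing from $K$ to some intermediate field $F$ with $[F:K]\leq n$, and then extracting irreducible specializations that survive over $L$.

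For (3), the technical core of the theorem, I would invoke Haran's diamond theorem \cite[Theorem 13.8.3]{FJ}: its proof constructs a twisted wreath product from an assumed failure of Hilbertianity over $L$ and transports it into an irreducibility failure over $K$, contradicting the Hilbertianity of $K$. Part (4) then follows by induction on $m$, using Kuyk's theorem for the abelian steps and the diamond theorem, together with the fact that a profinite group that is a product of finite simple groups admits many decompositions into complementary closed normal subgroups, for the product-of-finite-simple-groups steps. For (5), I would invoke the framework of $\Theta$-Hilbertian fields developed in \cite{FH}: the hypothesis forces ${\rm Gal}(\overline{K}/L)$ to contain the intersection of the kernels of the given $\ell$-adic representations, which is small enough in the $\Theta$-sense that the diamond-type preservation theorem of that framework applies and yields the Hilbertianity of $L$.

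The main obstacle is (3): Haran's diamond theorem is the one genuinely deep result here, and it also drives both (4) and the diamond-type argument underlying (5). The remaining items are either classical or are combinatorial consequences of (3) once the appropriate reduction to a diamond configuration has been set up.
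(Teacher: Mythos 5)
Your references for (1) and (3) are essentially the paper's own: (1) combines Theorem~\ref{thm:Hilbertian}(2) for the transcendental case with the classical finite-extension preservation, and (3) is Haran's diamond theorem, so those are fine. For (2), note that the paper's definition of ``small'' is strictly more general than ``Galois with finitely generated Galois group'': the intermediate fields $F$ with $[F:K]\le n$ are not required to be normal over $K$. The Weissauer/\cite[Chapter~16]{FJ} results you invoke (e.g.\ \cite[Prop.~16.11.1]{FJ}) handle the Galois case, but the paper derives the general small case from Theorem~\ref{thm:base_change}(2), i.e.\ from \cite{BFP24}; a counting argument over intermediate $F$'s alone will not give you the non-Galois case without extra work.

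The serious gaps are in (4) and (5). For (4), your induction on $m$ does not obviously close: $L$ is an arbitrary intermediate field $K\subseteq L\subseteq K_m$, not one of the $K_i$, and passing to $L\cap K_{m-1}$ neither shortens the tower nor preserves its group-theoretic shape (a closed subgroup of a product of finite simple groups need not again be such a product). The actual result is \cite[Theorem~1.1]{BFW}, whose proof is considerably more delicate than a layer-by-layer Kuyk/diamond argument; only the abelian step and the product-of-simple-groups step over the \emph{bottom} field reduce cleanly to Kuyk and to (3) respectively. For (5), your appeal to the $\Theta$-Hilbertian framework of \cite{FH} is simply the wrong reference: that framework is an unrelated strengthening of Hilbertianity mentioned in Remark~\ref{rem:Hilbertian_def} and plays no role here. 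The result is \cite[Theorem~1.2]{BFW}, and the mechanism is to analyze the image of a continuous $\rho_\ell\colon{\rm Gal}(\overline K/K)\to{\rm GL}_n(\overline{\mathbb Q}_\ell)$ — a compact $\ell$-adic analytic group — and show it sits in a tower of the shape required by (4) (via solvable radicals and composition factors of bounded Lie type), whence (5) reduces to (4). Without this structure theory for $\ell$-adic images, the hypothesis in (5) gives you nothing to feed into a diamond-type argument.
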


\begin{proof}
(1) is classical, see e.g.~\cite[Propositions 12.3.3 and 13.2.1]{FJ}.
(2) is written down in the special case where $L/K$ is Galois in \cite[Proposition 16.11.1]{FJ};
it applies in particular when ${\rm Gal}(L/K)$ is finitely generated as a profinite group;
the general statement is a special case of Theorem \ref{thm:base_change}(2) below.
(3) is \cite[Theorem 4.1]{Haran}; 
it applies in particular when $L$ is a proper finite extension of a Galois extension of $K$, which is \cite[Satz 9.7]{Weissauer}.
(4) is \cite[Theorem 1.1]{BFW}; the special case where $L/K$ is abelian is in \cite{Kuyk},
and the special case where $L/K$ is Galois with Galois group a product of finite simple groups follows from (3)
and is also contained in \cite{Kuyk}.
(5) builds on (4) and is \cite[Theorem 1.2]{BFW}; 
taking the Galois representation on the $\ell$-adic Tate module it immediately applies to the case when $L$ is contained in the division field $K(A_{\rm tor})$ of an abelian variety $A/K$, cf.~\cite[Theorem 1.3]{BFW}; this case was posed as a question in \cite{Jarden}, and partial solutions were given
in \cite{Jarden, FJP,Thornhill}.
\end{proof}

\begin{theorem}\label{thm:base_change}
If $X$ has HP, respectively WHP, then so does the base change $X_L$,
whenever one of the following conditions holds:
\begin{enumerate}
    \item $L/K$ is finitely generated.
    \item $L/K$ is small (cf.~Theorem \ref{thm:permanence}(2)).
\end{enumerate}   
\end{theorem}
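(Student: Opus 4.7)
The plan is to reduce Theorem~\ref{thm:base_change} to the hypothesis that $X$ has HP (resp.\ WHP) over $K$ via three techniques: a spreading-specialization argument for purely transcendental steps, a Weil-restriction argument for finite algebraic steps, and a smallness-bookkeeping argument in case~(2). For (1), any finitely generated extension factors as $K\subseteq K(t_1,\dots,t_n)\subseteq L$ with the second step finite, and the first is an iteration of $K'\subseteq K'(t)$, so by induction on $n$ it suffices to treat the steps $K\to K(t)$ and $L/K$ finite separately.

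For $L=K(t)$, suppose toward contradiction that $X_{K(t)}(K(t))$ is thin (strongly thin) via covers $\pi_i\colon Y_i\to X_{K(t)}$ of degree $>1$ (ramified, for WHP), with the Zariski closure of the exceptional set a proper closed $Z\subsetneq X_{K(t)}$. Spread the $\pi_i$ and $Z$ to covers $\tilde{\pi}_i\colon \tilde{Y}_i\to X\times U$ and a closed $\mathcal{Z}\subsetneq X\times U$ over a dense open $U\subseteq\mathbb{A}^1_K$, with $\tilde{\pi}_i$ having generic fiber $\pi_i$. Since $\dim X\geq 1$ forces $K$ to be Hilbertian by Proposition~\ref{prop:WHP_implies_Hilbertian}, one can choose a Hilbertian $\tau\in U(K)$ outside a proper closed locus so that each $\tilde{Y}_{i,\tau}$ is an integral $K$-variety and each $\tilde{\pi}_{i,\tau}\colon\tilde{Y}_{i,\tau}\to X$ is a cover of degree $>1$ (ramified when $\pi_i$ is). Each $x\in X(K)$ defines a constant $K(t)$-point $P_x\in X_{K(t)}(K(t))$; either $P_x\in Z(K(t))$, in which case $x$ lies in a proper closed subset of $X$ independent of $\tau$, or $P_x$ lifts through some $\pi_i$, in which case the spread lift evaluates at $\tau$ to give $x\in\tilde{\pi}_{i,\tau}(\tilde{Y}_{i,\tau}(K))$. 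This yields a thin (strongly thin) description of $X(K)$, contradicting the hypothesis.

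For $L/K$ finite, use the Weil restriction $R:=R_{L/K}$: the counit is a closed immersion $X\hookrightarrow R(X_L)$ realizing $X(K)\subseteq X_L(L)=R(X_L)(K)$ on $K$-points. For each cover $\pi_i\colon Y_i\to X_L$ of degree $>1$, set $Z_i:=X\times_{R(X_L)}R(Y_i)$. Over $\overline{K}$ the schemes $R(X_L)$ and $R(Y_i)$ become the $[L:K]$-fold products of $X_{\overline{K}}$ and $Y_{i,\overline{K}}$ indexed by embeddings $L\hookrightarrow\overline{K}$, with $R(\pi_i)$ the product of the Galois conjugates of $\pi_i$ and $X\hookrightarrow R(X_L)$ the diagonal; pulling back along this diagonal shows $Z_i\to X$ is a finite map of degree $\deg(\pi_i)^{[L:K]}>1$, ramified whenever $\pi_i$ is. Decomposing $Z_i$ into integral components dominating $X$ yields covers of $X$ whose $K$-points project onto $X(K)\cap\pi_i(Y_i(L))$, so a thin (strongly thin) description of $X_L(L)$ pulls back to one of $X(K)$, contradicting HP (WHP) of $X$.

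For (2), let $L/K$ be small and suppose $X_L(L)$ is thin (strongly thin) via $\pi_i\colon Y_i\to X_L$ ($i=1,\dots,n$). Each $\pi_i$ descends to a finite subextension $F_i/K$, and their compositum $F$ is again a finite subextension with covers $\tilde{\pi}_i\colon\tilde{Y}_i\to X_F$ satisfying $(\tilde{\pi}_i)_L=\pi_i$. An $F$-point of $X_F$ lifting to an $L$-point of $Y_i$ does so over some intermediate field $F\subseteq F'\subseteq L$ with $[F':F]\leq\deg(\pi_i)$; smallness of $L/F$ ensures that only finitely many such $F'$ arise. For each admissible $F'$, applying the Weil-restriction construction from case~(1) to the extension $F'/F$ and the cover $(\tilde{Y}_i)_{F'}\to X_{F'}$ produces covers of $X_F$ whose $F$-points realize the $F$-points of $X_F$ lifting through $\tilde{\pi}_i$ via $F'$. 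Collecting over $i$ and the finitely many $F'$ yields a thin (strongly thin) description of $X_F(F)$, contradicting the HP (WHP) of $X_F$ given by case~(1) applied to $F/K$. The main obstacle is this last step: one must rigorously justify that the admissible $F'$ are finite in number (this is precisely where smallness of $L/K$ is essential), and that the resulting finite union of auxiliary covers genuinely captures all $L$-lifts, with ramification preserved in the WHP case.
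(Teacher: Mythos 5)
The paper does not give a self‑contained proof of this statement: it simply cites \cite[Cor.~3.3, Cor.~4.3]{BFP24} for the general case and \cite[Cor.~3.2.2]{Serre}, \cite[Prop.~3.15]{CDJLZ} for the finite subcase, so a literal comparison of proofs is not possible. Your three-step strategy (purely transcendental step by spreading out and specializing, finite step by Weil restriction, small step by descending the covers and bounding the possible intermediate fields) is the standard direct route, and it does work; the Weil-restriction argument for the finite step is essentially Serre's.

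A few places deserve more care, none of which is a fatal gap. (i) In the $K(t)$ step, the integrality of the specialized covers $\tilde Y_{i,\tau}$ needs a real argument. Either first reduce to the case where the $Y_i$ are geometrically integral over $K(t)$ (if not, $Y_i(K(t))$ already lies in a proper closed subset, by the same observation as Remark~\ref{rem:HP_birat}, so the cover can be discarded), in which case a dense open set of $\tau$ works with no Hilbertian input; or, as you suggest, invoke Hilbert irreducibility for $K$, but then one must apply it to the Stein factorization $U'\to U$ of $\tilde Y_i\to U$ and combine it with generic geometric integrality of the fibers of $\tilde Y_i\to U'$ — the reference to ``a Hilbertian $\tau$'' alone does not make clear which cover of $\mathbb{A}^1_K$ HIT is applied to. Also, the extension of a lift $Q\in Y_i(K(t))$ to a $U$-section uses that $\tilde Y_i\times_{X\times U}(\{x\}\times U)$ is finite (hence proper) over the regular $1$-dimensional $U$, which requires shrinking $U$ once and for all before choosing $\tau$; this should be said. (ii) In the Weil-restriction step, the unit (not counit) of the adjunction gives $X\hookrightarrow R_{L/K}(X_L)$; and the irreducible components of $Z_i$ dominating $X$ must be normalized before they qualify as covers in the WHP definition, with the observation that normalization does not affect ramification or Zariski-density of image $K$-points. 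You should also note explicitly why a dominating component of $Z_i$ necessarily has degree $>1$: it surjects onto one factor $Y_i^\sigma$, which already has degree $>1$ over $X$. (iii) In the small-extension step, the bound $[F':F]\le\deg(\tilde\pi_i)$ requires $\tilde\pi_i$ to be finite flat over the $F$-points considered; arrange this by shrinking $X_F$ to the flat (generically étale) locus and absorbing the complement into the exceptional closed set, and also note that $L/F$ remains small because intermediate fields of bounded degree over $F$ have bounded degree over $K$. With these repairs the argument is complete and correct.
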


\begin{proof}
(1) is \cite[Corollary 3.3]{BFP24} and (2) is \cite[Corollary 4.3]{BFP24}.
The special case (of both (1) and (2)) where $L/K$ is finite is \cite[Corollary 3.2.2]{Serre} for HP
and \cite[Proposition 3.15]{CDJLZ} for WHP (assuming $X$ is smooth and proper).
\end{proof}

\begin{question}
If $X$ has HP, respectively WHP, over $K$,
and $L$ is as in (3)-(5) of Theorem \ref{thm:permanence}, does it follow that
$X_L$ has HP, respectively WHP?
\end{question}

For partial results concerning WHP analogues of (4) and (5) of Theorem \ref{thm:permanence}
for abelian varieties 
see Section \ref{sec:AV}.

\begin{remark}\label{remark:preservation}
The results of Theorem \ref{thm:base_change} have stronger and more precise formulations in terms of thin and strongly thin sets, see \cite{BFP24}.
Similarly, several of the results on Theorem \ref{thm:permanence} have stronger formulations in terms of
so-called Hilbert sets, as defined in \cite[Section 12.1]{FJ}.
The connection between these two notions is that a subset of $\mathbb{A}^n(K)$ is thin if and only if its complement contains a Hilbert set.
\end{remark}

Contrary to these positive results,
 $X_L$ never has HP or WHP in the case $L=\overline{K}$ and ${\rm dim}(X)>0$ (Proposition \ref{prop:WHP_implies_Hilbertian}),
so some condition on $L/K$ as in Theorem \ref{thm:permanence} is necessary.
Also, if $L/K$ is a finite extension and $L$ is Hilbertian, then it is not necessarily the case that $K$ is Hilbertian,
cf.~Example~\ref{ex:PAC}.
In particular, $\mathbb{P}^1_K$ does not have HP in this case, whereas $\mathbb{P}^1_L$ does.

\subsection{Going down morphisms}

Under certain conditions on a morphism, 
HP and WHP transfer from the source to the target.

\begin{theorem}\label{thm:down} 
Let $f\colon X\rightarrow Y$ be a morphism of normal $K$-varieties.
\begin{enumerate}
\item If $X$ has HP
and $f$ is dominant
and has geometrically irreducible generic fiber,
then  $Y$ has HP.
\item  If $X$ has WHP and
$f$ is smooth, surjective, and has geometrically irreducible generic fiber,
then  $Y$ has WHP.  
\item If $X$ has WHP 
and $f$ is smooth and proper,
then $Y$ has WHP.
\end{enumerate}
\end{theorem}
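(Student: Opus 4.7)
All three parts I would prove by the contrapositive: assuming a (strongly) thin decomposition of $Y(K)$, pull it back along $f$ to produce a corresponding decomposition of $X(K)$, contradicting (W)HP of $X$. For (1), suppose $Y(K)\subseteq V(K)\cup\bigcup_i\pi_i(Z_i(K))$ with $V\subsetneq Y$ proper closed and covers $\pi_i\colon Z_i\to Y$ of degree $>1$. Geometric irreducibility of the generic fiber of $f$ means $K(Y)$ is algebraically closed in $K(X)$, so for the finite extension $K(Z_i)/K(Y)$ the tensor product $K(Z_i)\otimes_{K(Y)}K(X)$ is a field; consequently $X\times_Y Z_i$ has a unique irreducible component $W_i$ dominating $X$, and the remaining components map into a proper closed subset $V_i\subsetneq X$. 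After normalization, $W_i\to X$ is a cover of degree $\deg(\pi_i)>1$. Since $f$ is dominant, $f^{-1}(V)\subsetneq X$ is also proper closed, and any $K$-point $x\in X(K)\setminus(f^{-1}(V)\cup\bigcup_i V_i)$ satisfies $f(x)=\pi_i(z)$ for some $z\in Z_i(K)$, so its lift in $X\times_Y Z_i$ must lie on $W_i$. This exhibits $X(K)$ as thin, contradicting HP.

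For (2) the same pullback works, but $W_i:=X\times_Y Z_i\to X$ must be a ramified cover with $W_i$ normal. The projection $W_i\to Z_i$ is a base change of the smooth $f$, hence smooth; since $Z_i$ is normal, $W_i$ is normal, and geometric irreducibility of the generic fiber of $f$ together with flatness forces $W_i$ to be irreducible, so no normalization is needed. Because $f$ is faithfully flat, the non-\'etale locus of $W_i\to X$ is the preimage under $W_i\to Z_i$ of the non-\'etale locus of $\pi_i$, which is nonempty since $\pi_i$ is ramified. Thus $W_i\to X$ is a ramified cover, and the argument of (1) yields strong thinness of $X(K)$, contradicting WHP.

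For (3) I would reduce to (2) via Stein factorization. In characteristic zero a smooth proper morphism has geometrically reduced fibers, so in the Stein factorization $f=g\circ h$ with $h\colon X\to Y':=\mathrm{Spec}_Y(f_\ast\mathcal{O}_X)$, the map $h$ is smooth proper with geometrically connected (hence geometrically integral) fibers and $g\colon Y'\to Y$ is finite \'etale; applying (2) to $h$ gives WHP for $Y'$. It then suffices to show the lemma: if $g\colon Y'\to Y$ is finite \'etale surjective of normal $K$-varieties and $Y'$ has WHP, then so does $Y$. Assume $Y(K)$ is strongly thin via ramified covers $\pi_j\colon Z_j\to Y$, set $W_j:=Y'\times_Y Z_j$, and let $W_j^{(k)}$ be its irreducible components. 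The projection $W_j\to Z_j$ is finite \'etale (base change of $g$), so $W_j$ is normal and each $W_j^{(k)}\to Z_j$ is finite \'etale surjective (its image is clopen in the connected $Z_j$); by a dimension count, $W_j^{(k)}$ also surjects onto $Y'$. If $W_j^{(k)}\to Y'$ were \'etale, composition with $g$ would make $W_j^{(k)}\to Y$ \'etale; but this morphism also factors as $W_j^{(k)}\to Z_j\xrightarrow{\pi_j} Y$ with $W_j^{(k)}\to Z_j$ \'etale and surjective, so descent of \'etaleness would force $\pi_j$ to be \'etale, contradicting ramification. Hence every $W_j^{(k)}\to Y'$ is a ramified cover, and pulling the strongly thin decomposition of $Y(K)$ back via $g$ exhibits $Y'(K)$ as strongly thin, contradicting WHP of $Y'$. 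The main obstacle is this final lemma together with the Stein factorization input (that the finite part is \'etale for smooth proper morphisms in characteristic zero), both of which rest on descent of \'etaleness along faithfully flat morphisms.
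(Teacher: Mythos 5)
The paper gives no argument of its own for this theorem --- it defers to \cite[Proposition 7.13]{CTS} for (1) and to \cite[Lemma 5.3]{BFP24} for (2) and (3) --- so I can only check your proof against the statement. Your pullback argument is the natural one and is essentially correct.

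Two small remarks. In (1) you say ``after normalization, $W_i\to X$ is a cover'' but then argue that the lift $(x,z)$ lies in $W_i(K)$, i.e.\ on the \emph{un}normalized component; for the definition of thin you do not need $Y_i$ normal, so the cleanest fix is simply to drop the normalization and use $W_i$ itself (it is an integral $K$-variety and $W_i\to X$ is finite surjective generically \'etale of degree $\deg\pi_i>1$). Alternatively, if you normalize, the image of the singular locus of $W_i$ is a proper closed subset and can be absorbed into the non-dense remainder. In (2), the three facts you isolate --- $W_i$ is normal (smooth over the normal $Z_i$), $W_i$ is irreducible (flat over the irreducible $Z_i$ with irreducible generic fiber, which uses geometric irreducibility of the generic fiber of $f$), and $W_i\to X$ is ramified (\'etaleness descends along the faithfully flat $f$) --- are all correct and exactly what is needed. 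In (3) the reduction via Stein factorization is sound: for a smooth proper morphism of varieties in characteristic zero the finite part $g\colon Y'\to Y$ is \'etale and $h\colon X\to Y'$ is smooth proper with geometrically integral fibers, so (2) applies to $h$, and your closing lemma (WHP descends along a finite \'etale surjection, proved by pulling back the strongly thin decomposition and descending \'etaleness along the faithfully flat \'etale projections $W_j^{(k)}\to Z_j$) is correct. It is worth noting that this lemma is itself the special case of (3) in which $f$ is finite \'etale, so your structure is really: establish that base case directly, then bootstrap the general case via Stein factorization and part (2).
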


\begin{proof}
(1) is \cite[Proposition 7.13]{CTS}, 
(2) and (3) follow from parts (a) respectively (b) of \cite[Lemma 5.3]
{BFP24}, phrased for strongly thin sets.
In the case where $X$ and $Y$ are smooth and proper, (2) and (3) follow from \cite[Theorem 3.7]{CDJLZ}.
\end{proof}

\begin{remark}
As in Remark \ref{remark:preservation}, there are more precise versions of Theorem \ref{thm:down} for thin and strongly thin sets in the literature,
see \cite[Remark 5.4]{BFP24}.
Related to Theorem \ref{thm:down}(3), Conjecture \ref{conj:campana_corvaja_zannier}
predicts that if
$K$ is finitely generated,
$X\to Y$ is a surjective morphism of smooth projective $K$-varieties 
and $X$ has potential WHP, then $Y$ has potential WHP.
A variety $Y$ for which there exists $f$ as in (1) with $X$ rational is called
{\em Hilbert-unirational} in \cite{DemeioStreeterWinter}, following a suggestion by Colliot-Th\'el\`ene.
\end{remark}
 
As a particular case of ``going down'' a morphism,
HP passes to {\em quotient} varieties in some situations.
We will see more results on quotient varieties in Section \ref{sec:algebraic_groups}.

\begin{theorem}  
Let $X$ be a geometrically integral $K$-variety
and $G$ a finite group acting generically freely on $X$.
Assume there exists a linearly disjoint family of Galois extensions $(L_i)_{i\in\mathbb{N}}$ of $K$
such that for each $i\in \mathbb{N}$ there exists
an isomorphism $\alpha_i\in{\rm Hom}({\rm Gal}(L_i/K),G)$
such that the twist of $X$ by $\alpha_i$ has HP.
Then the quotient variety $X/G$ has HP.
\end{theorem}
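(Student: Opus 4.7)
My plan is to apply the Galois-cover characterization of HP stated earlier in the excerpt: $X/G$ has HP if and only if for every Galois cover $p\colon V\to X/G$, the set of $y\in (X/G)(K)$ with $V_y$ integral is Zariski-dense. So I fix such a $p$. First I would replace $V$ by the normalization of $X/G$ in the compositum $K(V)\cdot K(X)$ inside an algebraic closure of $K(X/G)$, which is still Galois over $X/G$ and now factors through $X$. Writing $\Gamma={\rm Gal}(V/(X/G))$ and $N={\rm Gal}(V/X)\trianglelefteq\Gamma$, one has $\Gamma/N\cong G$. Since an integral fiber of the enlarged cover forces an integral fiber of the original one, this reduction is harmless.

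Next I would choose an appropriate index $i$. Let $K_0$ be the algebraic closure of $K$ inside $K(V)$, a finite Galois extension of $K$. By linear disjointness of $(L_i)_{i\in\mathbb{N}}$, only finitely many $L_i$ can share a nontrivial subextension with $K_0$, so I can pick $i$ with $L_i$ linearly disjoint from $K_0$ over $K$; by hypothesis $X^{\alpha_i}$ then has HP. Form $V^{(i)}:=V\times_{X/G}X^{\alpha_i}$, with its natural map to $X^{\alpha_i}$ and the $\Gamma$-action pulled back from $V\to X/G$. The central claim is that $V^{(i)}$ is an irreducible $K$-variety, so that $V^{(i)}\to X^{\alpha_i}$ is a genuine Galois cover with group $\Gamma$, to which HP of $X^{\alpha_i}$ may be applied.

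To prove this claim, I would base-change to $E:=L_i\cdot K_0$, whose Galois group over $K$ is canonically ${\rm Gal}(L_i/K)\times{\rm Gal}(K_0/K)\cong G\times{\rm Gal}(K_0/K)$ by linear disjointness. The trivialization $(X^{\alpha_i})_{L_i}\cong X_{L_i}$ built into the definition of the twist, combined with the decomposition $X\times_{X/G}X\cong\bigsqcup_{g\in G}X$ coming from the $G$-Galois cover $X\to X/G$, identifies $V^{(i)}_E$ with $\bigsqcup_{g\in G}V_E$; and $V_E$ itself splits into $[K_0:K]$ geometrically integral components indexed by the embeddings of $K_0$ into $E$ fixing $K$. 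Computing the twisted Galois action, $\sigma\in{\rm Gal}(L_i/K)$ sends the $g$-branch to the $\alpha_i(\sigma)g$-branch, so this factor acts transitively on the $|G|$ branches because $\alpha_i$ is an isomorphism; meanwhile ${\rm Gal}(K_0/K)$ acts transitively on the $[K_0:K]$ arithmetic components of $V_E$ by Galois descent. The direct product structure given by linear disjointness then yields a transitive ${\rm Gal}(E/K)$-action on all $|G|\cdot[K_0:K]$ components, proving irreducibility.

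With the claim in hand, applying HP of $X^{\alpha_i}$ to the Galois cover $V^{(i)}\to X^{\alpha_i}$ produces a Zariski-dense $\Sigma_i\subseteq X^{\alpha_i}(K)$ with $V^{(i)}_{\tilde y}$ integral for every $\tilde y\in\Sigma_i$. Since $V^{(i)}_{\tilde y}=V_{\pi_i(\tilde y)}$ by the fibre-product identity and $\pi_i\colon X^{\alpha_i}\to X/G$ is finite and surjective, $\pi_i(\Sigma_i)\subseteq (X/G)(K)$ is Zariski-dense and consists of points with integral $V$-fiber, which finishes the proof. The main obstacle is the irreducibility step: one must carefully unravel the definition of $X^{\alpha_i}$ to compute the twisted Galois action on the components of $V^{(i)}$ after base-change, and the assumption of linear disjointness of $(L_i)$ is used precisely to select $i$ so that the $|G|$ "twist" branches and the $[K_0:K]$ arithmetic components combine into a single ${\rm Gal}(E/K)$-orbit.
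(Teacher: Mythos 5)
Your proof is correct and is, as far as one can tell, essentially the argument of Demeio's Theorem~3.1, which is what the paper cites for this result (the survey does not reproduce a proof, only references Demeio and notes that the hypothesis is stated slightly differently, using linear disjointness in place of his condition~(ii)). The reduction to a single Galois cover $V\to X/G$ via the Galois-cover characterization of HP, the enlargement of $V$ so that it factors through $X$, the use of linear disjointness of the family $(L_i)$ to pick an index with $L_i$ linearly disjoint from the constant field $K_0$ of $V$, and then the irreducibility of the twisted pullback $V\times_{X/G}X^{\alpha_i}$ via the transitive product action of ${\rm Gal}(L_iK_0/K)\cong G\times{\rm Gal}(K_0/K)$ on the $|G|\cdot[K_0:K]$ geometric components, followed by applying HP of $X^{\alpha_i}$ and pushing forward along the finite surjective map $\pi_i$, is exactly the expected route. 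One small technicality to flag: your statement that ``an integral fiber of the enlarged cover forces an integral fiber of the original one'' holds after restricting to the dense open where $V\to X/G$ is \'etale (so the smaller fiber is reduced, not just irreducible); and $V^{(i)}$ should be replaced by its normalization or restricted to a dense open so that it is genuinely integral before invoking the Galois-cover characterization of HP for $X^{\alpha_i}$. These are routine and do not affect the argument.
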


\begin{proof}  
This is \cite[Theorem 3.1]{Demeio}, except that condition (ii) there is replaced 
by the equivalent condition of linear disjointness, cf.~\cite[Remark 3.4]{Demeio}.
\end{proof}

For example, using Shafarevich's theorem,
it follows that $\mathbb{A}^n_\mathbb{Q}/G$ has HP for every 
linear action of a finite solvable group $G$ on $\mathbb{A}^n_\mathbb{Q}$ \cite[Corollary 3.5]{Demeio}\footnote{The proof of \cite[Corollary 3.5]{Demeio} uses \cite[Proposition 1.2]{Demeio}, which misses the assumption that the field is Hilbertian, but this assumption is anyway satisfied in the situation of \cite[Corollary 3.5]{Demeio}.}.

\subsection{Going up morphisms}
In the other direction, potential WHP transfers also from the target to source in some cases:

\begin{theorem}\label{thm:up}
Let $K$ be finitely generated 
and let $f\colon X\rightarrow Y$ be an \'etale cover of smooth proper geometrically integral $K$-varieties.
If $Y$ has potential WHP, then so does $X$.
\end{theorem}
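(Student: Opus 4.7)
The plan is to reduce to a Galois cover, classify $L$-rational points of $Y$ by the torsor class of the fibre via Chevalley--Weil, and identify a twist of $\pi$ that inherits WHP from $Y$.

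First I would replace $\pi\colon X\to Y$ by its Galois closure. Let $\tilde\pi\colon \tilde X\to Y$ be the normalization of $Y$ in the Galois closure of $K(X)/K(Y)$; then $\tilde\pi$ is again étale (Galois closures of étale covers are étale), Galois with some finite group $G$, and factors as $\tilde X\to X\to Y$. Because $\tilde X\to X$ is étale and finite, hence smooth and proper, Theorem~\ref{thm:down}(3) reduces the statement to proving that $\tilde X$ has potential WHP. After enlarging $K$ by a finite extension to make $\tilde X$ geometrically integral (which affects neither the hypothesis on $Y$ nor the potential-WHP conclusion on $X$), I may assume $\pi$ itself is étale Galois with group $G$.

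Choose a finite extension $L/K$ with $Y_L$ enjoying WHP. For each $y\in Y(L)$, the fibre $\pi^{-1}(y)$ is a $G$-torsor over $L$, giving a class $[y]\in H^1(L,G)$. By the generalized Chevalley--Weil theorem over finitely generated fields \cite[Theorem~3.8]{CDJLZ} (the same ingredient used in Theorem~\ref{thm:CZ}), the specialization map $Y(L)\to H^1(L,G)$ has finite image $\{c_1,\dots,c_r\}$, and accordingly $Y(L)=\bigsqcup_{i=1}^r Y_i$ with $Y_i=\{y:[y]=c_i\}$. Since a finite union of strongly thin sets is strongly thin and $Y(L)$ is not strongly thin, some $Y_{i_0}$ is not strongly thin in $Y_L$.

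Let $c=c_{i_0}$ and form the twisted cover $\pi_c\colon X^c\to Y_L$; this is again an étale Galois $G$-cover, and its image on $L$-points is exactly $Y_{i_0}$ (a fibre of $\pi_c$ is the trivial torsor precisely when the corresponding fibre of $\pi$ has class $c$). I claim $X^c$ has WHP over $L$: if instead $X^c(L)\subseteq\bigcup_j\sigma_j(Z_j(L))\cup C$ for ramified covers $\sigma_j\colon Z_j\to X^c$ and a Zariski non-dense $C$, then each composite $\pi_c\circ\sigma_j\colon Z_j\to Y_L$ is again a ramified cover (étale followed by ramified is ramified), and pushing forward the thin cover exhibits $Y_{i_0}$ as strongly thin in $Y_L$, contradicting the choice of $i_0$. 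Finally, pick a finite extension $L^*/L$ over which $c$ becomes trivial in $H^1(L^*,G)$; then $X^c_{L^*}\cong X_{L^*}$, so Theorem~\ref{thm:base_change}(1) gives WHP of $X_{L^*}$, i.e.\ potential WHP of $X$.

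The crux is the Chevalley--Weil finiteness: over number fields it stems from bounded ramification of fibre discriminants plus Hermite, and is extended to finitely generated ground fields in \cite{CDJLZ}. The remaining steps are formal manipulations with $G$-torsors and their twists; a minor additional subtlety is ensuring that the Galois closure $\tilde X$ is geometrically integral, which is handled by the initial finite extension of $K$.
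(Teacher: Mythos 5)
Your argument is correct and uses the same key ingredient the paper points to: the proof of \cite[Theorem 3.16]{CDJLZ}, which the paper cites without further detail, rests precisely on the Chevalley--Weil theorem over finitely generated fields \cite[Theorem 3.8]{CDJLZ}, and your reduction to a Galois cover, sorting of $L$-points by the torsor class of the fibre, and passage to a twist is the standard descent argument this citation encapsulates. (Two small imprecisions worth smoothing out: ``enlarging $K$ to make $\tilde X$ geometrically integral'' should be phrased as passing to the algebraic closure $K'$ of $K$ in $K(\tilde X)$, or as picking a geometrically integral component of $\tilde X_{K'}$ dominating $X_{K'}$; and the composition $\pi_c\circ\sigma_j$ is ``ramified followed by étale,'' which is non-étale by the cancellation lemma for étale morphisms since $\pi_c$ is unramified.)
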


\begin{proof}
This follows from \cite[Theorem 3.16]{CDJLZ},
which uses a Chevalley--Weil theorem over finitely generated fields \cite[Theorem 3.8]{CDJLZ}.
\end{proof}

\begin{remark} 
In Theorem \ref{thm:up}, one can drop neither ``étale'' nor ``potential''. 
For the latter see the following Example \ref{ex:torsor_of_elliptic_curve},
which shows that WHP does not transfer from the target to the source, even if the cover is étale.
For the former take any smooth projective curve $X$ over $\mathbb{Q}$ of genus $g_X>1$ and $f\colon X\rightarrow\mathbb{P}^1_\mathbb{Q}$ a nonconstant rational function on $X$; then  
$\mathbb{P}^1_\mathbb{Q}$ has HP (in particular potential WHP) but $X$ does not have potential WHP, see Theorem~\ref{thm:curves} below.
In particular, HP does not transfer from the target to the source.
However, Theorem~\ref{thm:up} with ``WHP'' replaced by ``HP'' still holds,
since the étale cover $f$ is necessarily trivial in this case by Theorem \ref{thm:CZ}.
\end{remark}
 
\begin{example}\label{ex:torsor_of_elliptic_curve}
If $Y=E$ is an elliptic curve over $\mathbb{Q}$ with nontrivial 2-torsion
and $X$ is a homogeneous space for $E$,
then there exists an \'etale cover $X\rightarrow E$ \cite[Proposition X.4.9]{Silverman}.
However, if $E$ has positive Mordell--Weil rank but $X(\mathbb{Q})=\emptyset$,
then $E$ has WHP (Example \ref{ex:E_WHP}) but $X$ does not
(although it has potential WHP by Example \ref{ex:E_WHP}, in agreement with Theorem \ref{thm:up}).
A concrete example for such $E$ and $X$ can be found in \cite[Example X.4.10]{Silverman}.
\end{example}
 
It is unclear whether the hypothesis that $K$ is finitely generated in Theorem \ref{thm:up} is necessary:

\begin{question} 
Let $K$ be Hilbertian
and let $f\colon X\to Y$ be an \'etale cover of smooth proper geometrically integral $K$-varieties.
Does $X$ have potential WHP whenever $Y$ has potential WHP?
\end{question}

\subsection{Product and fibration theorems}

Serre asked in \cite[p.~20]{Serre} whether the product of two varieties with HP again has HP.
This led to a number of results on products:

\begin{theorem}\label{thm:product}
Let $X$ and $Y$ be normal $K$-varieties.
\begin{enumerate}
    \item If $X$ and $Y$ have HP, then so does $X\times Y$. 
    \item Assume that $K$ is finitely generated and $X$ and $Y$ are proper over $K$. If $X$ and $Y$ have WHP, then so does $X\times Y$.  
\end{enumerate}
\end{theorem}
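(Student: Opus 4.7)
The plan is to reduce both statements to the Hilbert properties of the factors by fibering each candidate cover of $X\times Y$ over $Y$ and analyzing the associated Stein factorization. Let $\pi_i\colon Z_i\to X\times Y$ (with $\deg\pi_i>1$ in (1), ramified in (2)) be the covers given in the definition of (strong) thinness; I may assume each $Z_i$ is geometrically integral, as otherwise $\pi_i(Z_i(K))$ is not Zariski dense. Let $Z_i\xrightarrow{f_i}W_i\xrightarrow{q_i}Y$ be the Stein factorization of the composite $Z_i\to X\times Y\xrightarrow{\mathrm{pr}_Y}Y$, so $f_i$ has geometrically connected generic fiber and $q_i$ is finite. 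Either $\deg(q_i)>1$ (\emph{Type A}), in which case $q_i$ is a nontrivial cover of $Y$ and $\mathrm{pr}_Y(\pi_i(Z_i(K)))\subseteq q_i(W_i(K))$; or $q_i$ is an isomorphism (\emph{Type B}), equivalently $K(Y)$ is algebraically closed in $K(Z_i)$, so that $Z_{i,\eta_Y}$ is geometrically integral. By spreading out, in the latter case there is a dense open $U_i\subseteq Y$ such that $Z_{i,y}\to X$ is a geometrically integral cover of degree $\deg(\pi_i)>1$ for every $y\in U_i$.

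For (1), I apply HP of $Y$ to the Type A family $\{q_i\}$ together with the proper closed subset $\bigcup_i(Y\setminus U_i)$ to obtain a Zariski dense subset $S_Y\subseteq\bigcap_iU_i(K)$ disjoint from each $q_i(W_i(K))$. For every $y\in S_Y$, HP of $X$ applied to the finite family of Type B fibers $\{Z_{i,y}\to X\}_i$ yields a Zariski dense $S_y\subseteq X(K)$ with $(x,y)\notin\pi_i(Z_i(K))$ for all $i$ and all $x\in S_y$. The union $\Sigma=\bigcup_{y\in S_Y}S_y\times\{y\}$ is Zariski dense in $X\times Y$: if some proper closed $V\subsetneq X\times Y$ contained $\Sigma$, then $V_y=X$ for every $y\in S_Y$, forcing $V=X\times\overline{S_Y}=X\times Y$, a contradiction; hence some $y\in S_Y$ has $V_y\subsetneq X$, which cannot contain the Zariski dense $S_y$. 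This establishes HP of $X\times Y$.

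For (2) the same template applies with WHP and ramified covers, and Type B goes through unchanged: since $\pi_i$ has a nonempty branch divisor $B_i\subseteq X\times Y$, its restriction $B_i\cap(X\times\{y\})$ is a nonempty codimension-one subvariety of $X$ for $y$ in a dense open, so $Z_{i,y}\to X$ is a ramified cover of $X$ and WHP of $X$ yields $S_y$ fiberwise. The main obstacle is Type A: the Stein factor $q_i\colon W_i\to Y$ of a ramified $\pi_i$ can itself be \emph{\'etale}, the ramification being absorbed into $f_i\colon Z_i\to W_i$, and in that case WHP of $Y$ does not apply directly to $q_i$. This is precisely where the hypotheses that $Y$ is proper and $K$ is finitely generated enter: by the Chevalley--Weil theorem over finitely generated fields (\cite[Theorem~3.8]{CDJLZ}), for an \'etale $q_i\colon W_i\to Y$ the preimage of $Y(K)$ lies in $W_i(L)$ for a single finite extension $L/K$, so $q_i(W_i(K))$ can be controlled via finitely many $K$-twists $W_i^\sigma$ of $W_i$ (indexed by $H^1(\mathrm{Gal}(L/K),\mathrm{Aut}(W_i/Y))$), each of which is an \'etale $K$-cover of $Y$ to which one recursively applies the argument, combined with WHP of $Y$ for the genuinely ramified Type A covers. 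This bootstrap, carried out in \cite[Lemma~5.3]{BFP24} and \cite[Theorem~3.7]{CDJLZ}, produces $S_Y$ and, together with Type B fiberwise, yields the required Zariski dense subset of $(X\times Y)(K)$ avoiding every $\pi_i(Z_i(K))$.
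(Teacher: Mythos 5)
Your argument for part (1) is correct in outline and is essentially the standard fibration approach used in the literature (compare Theorem~\ref{thm:fibration} and Theorem~\ref{thm:generic_fiber_fibration}(1)): one fibers each cover of $X\times Y$ over $Y$ via Stein factorization, uses HP of $Y$ for the Stein factors of degree greater than one, and HP of $X$ fiberwise for the covers whose Stein factor over $Y$ is trivial; the Zariski density of the set $\Sigma$ you build is correctly established.

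For part (2), however, there is a genuine gap in your handling of the Type B case. You claim that for a ramified cover $\pi_i\colon Z_i\to X\times Y$ with trivial Stein factor over $Y$, the branch divisor $B_i$ meets $X\times\{y\}$ in a nonempty divisor for $y$ in a dense open, so that $Z_{i,y}\to X$ is a \emph{ramified} cover of $X$. This is false: $B_i$ can be contained entirely in $X\times D$ for a proper closed subset $D\subsetneq Y$, and then $Z_{i,y}\to X$ is \'etale for every $y\notin D$. Concretely, take $X=E$ an elliptic curve, $Y=\mathbb{P}^1_K$, fix a degree-$n$ cyclic isogeny $E'\to E$ and a degree-$n$ cyclic cover $V\to\mathbb{P}^1_K$ branched over $\{0,1,\infty\}$, and set $Z=(E'\times V)/H$, where $H\cong\mathbb{Z}/n$ is the antidiagonal subgroup of $\mathrm{Gal}(E'\times V/E\times\mathbb{P}^1_K)\cong(\mathbb{Z}/n)^2$. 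Then $Z\to E\times\mathbb{P}^1_K$ is a connected ramified $\mathbb{Z}/n$-cover whose Stein factor over $\mathbb{P}^1_K$ is trivial (so it is Type B in your classification), yet $Z_y\to E$ is an \emph{unramified} degree-$n$ cover for every $y\notin\{0,1,\infty\}$; WHP of $E$ gives no control over the images $Z_y(K)\to E(K)$. So the subtlety you correctly flag for Type A (\'etale Stein factor) also arises, in a slightly different guise, for Type B, and your argument as written does not close this gap. This is precisely the hard point overcome in \cite[Theorem 1.9]{CDJLZ} and \cite[Theorem 1.4]{Luger2} via the Chevalley--Weil theorem over finitely generated fields together with a careful bookkeeping of the finitely many (up to twist) \'etale covers arising in the fibers; it is also why the fibration Theorem~\ref{thm:generic_fiber_fibration}(2) currently requires the base to have full HP rather than WHP (cf.\ Question~\ref{question:fibrations}).
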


\begin{proof}
(1) was first published in \cite{BFP};
an unpublished manuscript of van den Dries from the 1990's proves this using nonstandard methods;
yet another proof was given in \cite[Corollary 3.5]{BFP24};
for $K$ a number field, (1) also follows from \cite[Lemma 8.12]{HW16}.
(2) follows from \cite[Theorem 1.4]{Luger2}, generalizing the special case of smooth proper varieties in \cite[Theorem 1.9]{CDJLZ} due to the second-named author of this survey and Wittenberg. 
\end{proof}

Note that the converse holds: If $X\times Y$ has HP or WHP, then so do both $X$ and $Y$.
If $X$ and $Y$ are smooth or $X\times Y$ has HP,
this follows from Theorem \ref{thm:down}
applied to the projections
$X\times Y\rightarrow X$ and $X\times Y\rightarrow Y$.
To see that it holds in general note that every ramified cover $X'\rightarrow X$ gives rise to a ramified cover $X'\times Y\rightarrow X\times Y$.

It is not known whether the assumption that $K$ is finitely generated in Theorem \ref{thm:product}(2) can be dropped.  
\begin{question}\label{q:product}
Let $K$ be Hilbertian 
and let $X$ and $Y$ be normal $K$-varieties. 
Does $X\times Y$ have WHP if and only if both $X$ and $Y$ have WHP?
\end{question}
 
Most product theorems for HP and WHP in the literature are deduced 
from fibration theorems, like the following:

\begin{theorem}\label{thm:mixed_fibration}\label{thm:fibration}
Let $f\colon X\rightarrow S$ be a dominant morphism of normal $K$-varieties
and $\Gamma\subseteq X(K)$.
Let $\Sigma\subseteq S(K)$ be such that for every $s\in\Sigma$, the fiber $X_s=f^{-1}(s)$
is  integral and  $\Gamma\cap X_s$ is not thin in $X_s$.
\begin{enumerate}
    \item If $\Sigma$ is not thin in $S$, then $\Gamma$ is not thin in $X$.
    \item If $\Sigma$ is not strongly thin in $S$, then $\Gamma$ is not strongly thin in $X$.
\end{enumerate}
In particular, if there is a dense open subset $U\subseteq S$ such that for every $s\in U(K)$ the fiber $X_s$ has HP,
then HP or WHP for $S$ imply HP respectively WHP for $X$.
\end{theorem}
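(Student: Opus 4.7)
The plan is to prove both parts by contrapositive, via a Stein factorization argument that reduces (strong) thinness in $X$ to (strong) thinness in $S$. So I would assume $\Gamma$ is thin (for (1)) or strongly thin (for (2)) in $X$, witnessed by covers $\pi_i\colon Y_i\to X$ for $i=1,\dots,n$ each of degree $>1$ (in case (1)) or ramified (in case (2)), together with a proper closed $Z\subsetneq X$, so that $\Gamma\subseteq Z(K)\cup\bigcup_{i=1}^n\pi_i(Y_i(K))$. The goal is then to exhibit $\Sigma$ as a (strongly) thin subset of $S$.

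For each $i$, I would factor $f\circ\pi_i\colon Y_i\to S$ through its Stein factorization $Y_i\to W_i\to S$, where $W_i$ is normal, $W_i\to S$ is finite surjective, and $Y_i\to W_i$ has geometrically connected generic fiber. This splits the indices into two types. \textbf{Type A:} $\deg(W_i\to S)>1$, and, in case (2), $W_i\to S$ is ramified; such an $i$ catches the image of $Y_i(K)$ in $S$ via the cover $W_i\to S$. \textbf{Type B:} $W_i\to S$ is birational, so that for $s$ in a dense open $U_i\subseteq S$ the restriction $\pi_i^{-1}(X_s)\to X_s$ is a cover of $X_s$ of degree $\deg(\pi_i)>1$, and, in case (2), ramified.

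The main obstacle is the parenthetical refinement in case (2): starting from a ramified $\pi_i$ on $X$, one must extract either a ramified Stein cover $W_i\to S$ or a ramified restriction to a generic fiber. I would handle this by analyzing the branch divisor $B_i\subseteq X$ of $\pi_i$. If $B_i$ dominates $S$, then $B_i\cap X_s$ is nonempty of pure codimension one in $X_s$ for generic $s$, so $\pi_i^{-1}(X_s)\to X_s$ is ramified (Type B). Otherwise $f(B_i)$ lies in a proper closed subset $D_i\subsetneq S$, and after replacing $W_i$ by the normalization of $S$ in a suitable intermediate extension between $K(S)$ and the Galois closure of $K(Y_i)/K(S)$, the cover $W_i\to S$ becomes ramified above $D_i$ (Type A).

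With the dichotomy in place, choose a dense open $V\subseteq S$ over which $X\to S$ has fibers of the generic dimension and $Z\cap X_s$ is a proper closed subset of $X_s$ (using $\dim Z<\dim X$), and set
\[
\Sigma_0 \;:=\; (S\setminus V)(K) \;\cup\; \bigcup_{i\text{ Type A}}(W_i\to S)(W_i(K)) \;\cup\; \bigcup_{i\text{ Type B}}(S\setminus U_i)(K).
\]
Then $\Sigma_0$ is (strongly) thin in $S(K)$, being a finite union of (strongly) thin sets. For $s\in\Sigma\setminus\Sigma_0$, every point of $\Gamma\cap X_s$ lies either in $(Z\cap X_s)(K)$ (a proper closed subset of $X_s$) or in the image of some $\pi_i|_{Y_{i,s}}$ with $i$ of Type B (since $s$ avoids all Type A images), and this latter map is a cover of $X_s$ of degree $>1$, respectively ramified; hence $\Gamma\cap X_s$ is (strongly) thin in $X_s$, contradicting the hypothesis on $\Sigma$ unless $\Sigma\subseteq\Sigma_0$. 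The ``in particular'' statement then follows by taking $\Gamma=X(K)$ and $\Sigma=U(K)$: the hypothesis on $U$ forces $\Gamma\cap X_s=X_s(K)$ to be non-thin for $s\in\Sigma$, and $S(K)$ non-(strongly-)thin implies $U(K)$ non-(strongly-)thin since $S\setminus U$ is a proper closed subset.
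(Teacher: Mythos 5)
The paper's "proof" of this theorem is only a pointer to [Lug22] for part (1) and to [Lug24b]/[Jav22] for part (2), so there is no in-paper argument to compare against line by line; I assess your proposal on its own terms. Your skeleton — contrapositive, Stein-factorize each $f\circ\pi_i\colon Y_i\to S$ into $Y_i\to W_i\to S$, split by whether $W_i\to S$ is birational or of degree $>1$, and collect a (strongly) thin $\Sigma_0$ — is the right framework and essentially carries part (1).

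Part (2), however, has a genuine gap, located precisely in the paragraph you flag as the "main obstacle''. First, a conceptual point: the hypothesis is that $\Gamma\cap X_s$ is \emph{not thin} in $X_s$ (not ``not strongly thin''), so for $s\notin\Sigma_0$ it suffices to show $\Gamma\cap X_s$ is thin. Hence the Type B restrictions $\pi_i^{-1}(X_s)\to X_s$ need only have degree $>1$; they do \emph{not} need to be ramified, and the branch-divisor dichotomy you introduce is unnecessary on that side. The real constraint is elsewhere: to make $\Sigma_0$ \emph{strongly} thin, every Type A Stein cover $W_i\to S$ that you feed into $\Sigma_0$ must itself be a \emph{ramified} cover of $S$. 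There is simply no reason for this. A ramified $\pi_i\colon Y_i\to X$ can perfectly well have étale Stein factor of degree $>1$: take $S=E$ an elliptic curve, $W_i=E\xrightarrow{[2]}E=S$, $X=E\times\mathbb{P}^1$, $C\to\mathbb{P}^1$ a ramified double cover, and $Y_i=W_i\times C\to X$ given by $[2]\times(C\to\mathbb{P}^1)$. Then $\pi_i$ is ramified while $W_i\to S$ is étale of degree $4$, so $(W_i\to S)(W_i(K))$ is thin but need not be strongly thin, and your $\Sigma_0$ fails to be strongly thin. Moreover, your proposed repair in the ``otherwise'' branch — normalize $S$ in an intermediate extension of ``the Galois closure of $K(Y_i)/K(S)$'' — is not well-defined: $K(Y_i)/K(S)$ has positive transcendence degree and has no Galois closure, and when $W_i\to S$ is birational there is no nontrivial algebraic subextension at all (e.g.\ $S=\mathbb{A}^1_s$, $X=\operatorname{Spec}K[s,t,t^{-1}]$, $Y_i=\operatorname{Spec}K[s,t,t^{-1},u]/(u^2-st)$: here $\pi_i$ is ramified over $s=0$, the branch is vertical, $W_i=S$, and for $s\neq 0$ the restricted cover is the connected étale double cover of $\mathbb{G}_m$, which is fine for thinness but has no ramified surrogate over $S$).

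What is missing is the following extra case in your dichotomy: when $\deg(W_i/S)>1$ and $W_i\to S$ is \emph{étale}, do not throw its image into $\Sigma_0$. Instead, let $X_i'$ be the normalization of $X$ in the compositum $K(X)K(W_i)$; since the generic fiber of $f$ is geometrically integral (a consequence of $\Sigma$ being Zariski-dense and $X_s$ integral for $s\in\Sigma$), this is an étale cover of $X$ of degree $\deg(W_i/S)$ through which $\pi_i$ factors as $Y_i\to X_i'\to X$. As $\pi_i$ is ramified and $X_i'\to X$ is étale, $Y_i\to X_i'$ has degree $>1$. For $s$ outside a proper closed subset and each $w\in W_{i,s}(K)$, the component of $X_{i,s}'$ over $w$ is a copy of $X_s$, and the restriction of $Y_i\to X_i'$ to it is a cover of $X_s$ of degree $>1$; hence $\pi_i(Y_{i,s}(K))$ is thin in $X_s$ with no contribution to $\Sigma_0$ required. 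Without this additional case your $\Sigma_0$ is not strongly thin and the contrapositive does not close in part (2).
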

\begin{proof}
(1) is \cite[Theorem 1.7]{Luger}, improving on a fibration result for HP \cite[Theorem 1.1]{BFP}.    
(2) is \cite[Theorem 2]{Luger3},
building on \cite[Theorem 1.3]{Javanpeykar22}.
The ``in particular'' part follows by setting $\Sigma=U(K)$ and $\Gamma=X(K)$.
\end{proof}

\begin{theorem}\label{thm:generic_fiber_fibration}
Let $f\colon X\rightarrow S$ be a dominant morphism of normal $K$-varieties,
and assume that the generic fiber $Z$ of $f$ is integral.
\begin{enumerate}
\item If $S$ has HP and $Z$ has HP, then $X$ has HP.
\item If $S$ has HP, $f$ is smooth and proper, and $Z$ has WHP, then $X$ has WHP.
\end{enumerate}
\end{theorem}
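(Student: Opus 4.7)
The strategy for both parts is a specialization argument combining the (W)HP of the generic fibre $Z$ over $K(S)$ with HP of $S$: from a $K(S)$-point $P$ of $Z$ avoiding finitely many ``bad'' covers, build a rational section $\sigma_P\colon S\dashrightarrow X$ of $f$, then use HP of $S$ to find $s\in S(K)$ at which $\sigma_P(s)\in X(K)$ still avoids all the bad covers.

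For (1), suppose $X(K)$ is thin, witnessed by covers $\pi_i\colon Y_i\to X$ of degree $>1$ ($i=1,\ldots,n$) and a proper closed $W\subsetneq X$. After shrinking $S$ to a dense open, I may assume $W$ contains no fibre of $f$ and that each base change $\pi_{i,Z}\colon Y_{i,Z}\to Z$ is a cover of $Z$ over $K(S)$ of degree $>1$ with $Y_{i,Z}$ integral. HP of $Z$ yields $P\in Z(K(S))\setminus W_Z$ with $P\notin \pi_{i,Z}(Y_{i,Z}(K(S)))$ for every $i$, which I may additionally arrange to lie in the \'etale locus of each $\pi_{i,Z}$. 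The point $P$ defines a rational section $\sigma_P\colon S\dashrightarrow X$ of $f$ on a dense open $U_P\subseteq S$. For each $i$, the fibre $\pi_{i,Z}^{-1}(P)=\mathrm{Spec}(\prod_j L_{i,j})$ is a finite \'etale $K(S)$-algebra without a $K(S)$-point, so each $L_{i,j}\ne K(S)$; it spreads out (after further shrinking) to finitely many covers $\widetilde T_{i,j}\to U_P$ of degree $[L_{i,j}:K(S)]>1$, with the property that $\sigma_P(s)\in\pi_i(Y_i(K))$ forces $s$ to lie in the image of $\widetilde T_{i,j}(K)\to S(K)$ for some $j$. Applying HP of $S$, the set
\[
S(K)\setminus\left(\sigma_P^{-1}(W)(K)\cup(S\setminus U_P)(K)\cup\bigcup_{i,j}\mathrm{im}\bigl(\widetilde T_{i,j}(K)\to S(K)\bigr)\right)
\]
is non-thin in $S$, hence non-empty; any $s$ in it gives $\sigma_P(s)\in X(K)\setminus(W\cup\bigcup_i\pi_i(Y_i(K)))$, a contradiction.

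For (2), the same template applies using ramified covers $\pi_i$ and WHP of $Z$, with one added subtlety: a ramified cover $\pi_i\colon Y_i\to X$ can have an \emph{\'etale} generic fibre $\pi_{i,Z}$ when its ramification is ``vertical'', meaning its ramification divisor lies in $f^{-1}(V)$ for some proper closed $V\subsetneq S$; in that case WHP of $Z$ is silent about $\pi_{i,Z}$. The hypothesis that $f$ is smooth and proper is exactly what is needed to separate each witnessing $\pi_i$ into a horizontal part (whose generic fibre $\pi_{i,Z}$ is genuinely ramified, and is handled by WHP of $Z$) and a vertical part (which, up to \'etale base change, is pulled back from a ramified cover of $S$ and can be absorbed into the HP-of-$S$ step as another cover $\widetilde T_{i,j}\to S$ of degree $>1$). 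Once this separation is carried out, the argument of~(1) goes through verbatim.

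The main obstacle is precisely this horizontal/vertical decomposition of ramified covers in~(2): proving that, under smoothness and properness of $f$, every ramified cover of $X$ either restricts to a ramified cover of the generic fibre $Z$ or factors, up to \'etale base change, through a ramified cover of $S$. This is the geometric content of the spreading-out arguments underlying \cite[Theorem~3.7]{CDJLZ} and \cite[Lemma~5.3]{BFP24}, where properness is essential to prevent the ramification divisor from escaping into a non-compact boundary.
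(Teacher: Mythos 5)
Your argument for part~(1) is the right one and is essentially correct modulo routine spreading-out details: choose $P\in Z(K(S))$ avoiding the finitely many bad covers (possible by HP of $Z$), spread it out to a rational section $\sigma_P$ of $f$, note that $\pi_{i,Z}^{-1}(P)$ spreads out to covers of $S$ of degree $>1$, and invoke HP of $S$. This is the approach of the cited \cite[Proposition~3.4]{BFP24}.

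Part~(2), however, has a genuine gap, and your claim that ``once this separation is carried out, the argument of (1) goes through verbatim'' is not right. First, the ``horizontal/vertical decomposition'' does not hold in the form you need: a ramified cover $\pi\colon Y\to X$ with $\pi_Z$ \'etale need \emph{not} factor through a nontrivial cover of $S$. For example, take $S=\mathbb{P}^1_K$, $T\in E[2](K)$ nontrivial, $E'=E/\langle T\rangle$, $D\to\mathbb{P}^1_K$ a ramified double cover, $X=E'\times\mathbb{P}^1_K$, and $Y=(E\times D)/\iota$ where $\iota$ acts freely by translation by $T$ on $E$ and by the deck transformation on $D$; then $Y\to X$ is ramified, $\pi_Z$ is \'etale, and the fibres of $Y\to S$ are connected, so the Stein factorization of $Y\to S$ is trivial. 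The hedge ``up to \'etale base change'' does not save this, since a $K$-point of $Y$ need not lift along an \'etale cover of $Y$, so no thin subset of $S(K)$ containing the image of $Y(K)\to S(K)$ is produced. Second, and more fundamentally, when $\pi_Z$ is \'etale, WHP of $Z$ cannot guarantee $P\notin\pi_Z(Y_Z(K(S)))$: for instance if $Z$ is an abelian variety and $\pi_Z$ is multiplication by~$2$, then $\pi_Z(Y_Z(K(S)))$ contains a finite-index subgroup of $Z(K(S))$. And if $P$ does lie in that image, $\sigma_P$ lifts to a rational section of $Y\to S$, so $\sigma_P(s)\in\pi(Y(K))$ for all $s$ in a dense open of $S$, and the specialization step yields nothing. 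This is precisely the difficulty addressed in the cited \cite[Theorem~1.6]{Pet25}; your references to \cite[Theorem~3.7]{CDJLZ} and \cite[Lemma~5.3]{BFP24} are also misplaced, as those underlie Theorem~\ref{thm:down} (going down along a morphism), not this fibration theorem.
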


\begin{proof}
(1) is \cite[Proposition 3.4]{BFP24},
and (2) is \cite[Theorem 1.6]{Pet25}.
\end{proof}

\begin{question}\label{question:fibrations}
Does Theorem \ref{thm:fibration}(2) hold if
we add the assumption that $X_s$ is normal but
$\Gamma\cap X_s$ is only assumed ``not strongly thin'' instead of ``not thin'' (for every $s\in\Sigma$)?  
Does Theorem \ref{thm:generic_fiber_fibration}(2) still hold if $S$ is only assumed to have WHP?
\end{question}

Note that a positive answer to 
any of these two questions
would also imply 
a positive answer to Question \ref{q:product},
by applying it to the projection $X\times Y\rightarrow Y$
(in the case of the second question 
this uses Theorem \ref{thm:base_change}(1)
and requires $X$ to be smooth).

\section{Curves}
\label{sec:curves}
 
\noindent
We summarize what is known regarding HP and WHP for curves.
Let $K$ always be a field of characteristic zero.
By a {\em curve} $X$ over $K$ we mean
a smooth projective geometrically integral variety over $K$ of dimension one.
As usual, the {\em genus} of $X$ is $g_X={\rm dim}_KH^1(X,\mathcal{O}_X)$. 
For general $K$,
only statements about curves of genus zero can be made:

\begin{proposition}\label{prop:curve0}
Let $K$ be Hilbertian
and let $X$ be a 
curve over $K$
of genus $g_X=0$.
The following are equivalent:
\begin{enumerate}
    \item $X$ has HP.
    \item $X$ is rational, i.e.~$X\cong\mathbb{P}_K^1$.
    \item $X(K)\neq\emptyset$.
\end{enumerate}
\end{proposition}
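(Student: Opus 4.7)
The proof plan is to close the cycle $(2) \Rightarrow (1) \Rightarrow (3) \Rightarrow (2)$, each step being essentially a citation of something already in the excerpt or a classical fact.

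First, for $(2) \Rightarrow (1)$: if $X \cong \mathbb{P}^1_K$, then $X$ is rational, so since $K$ is Hilbertian, $X$ has HP directly by Example~\ref{ex:HP_implies_Hilbertian}. No work to do here.

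Next, $(1) \Rightarrow (3)$: if $X$ has HP then $X(K)$ is not thin in $X$, and in particular it is Zariski-dense in $X$, hence nonempty. This is immediate from Definition~\ref{def:HP} (take $n=0$: if $X(K)$ were empty then $X(K) \setminus \emptyset$ would fail to be Zariski-dense, making $X(K)$ thin).

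Finally, $(3) \Rightarrow (2)$: this is the only step requiring any real input, but it is classical. Given $P \in X(K)$, apply Riemann--Roch to the divisor $P$: since $\deg(P) = 1 > -2 = 2g_X - 2$, one has $h^0(X, \mathcal{O}_X(P)) = 1 - g_X + 1 = 2$. The associated linear system gives a nonconstant $K$-morphism $X \to \mathbb{P}^1_K$ of degree $\deg(P) = 1$, which is then an isomorphism since both are smooth projective curves. Alternatively, one can embed $X$ anticanonically as a smooth conic in $\mathbb{P}^2_K$ and use stereographic projection from the $K$-rational point $P$ to obtain the isomorphism with $\mathbb{P}^1_K$.

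There is no real obstacle: the one nontrivial ingredient is the classical fact that a genus-zero curve with a rational point is rational, and everything else follows from definitions together with Example~\ref{ex:HP_implies_Hilbertian}. I would state the three implications in this order since it makes the use of the Hilbertian hypothesis (only needed for $(2) \Rightarrow (1)$) transparent, and makes clear that $(1) \Rightarrow (3) \Rightarrow (2)$ holds over any field of characteristic zero.
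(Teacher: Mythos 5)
Your proof is correct and follows essentially the same route as the paper: the paper likewise obtains $(2)\Rightarrow(1)$ from the Hilbertian hypothesis (citing Proposition~\ref{prop:WHP_implies_Hilbertian} rather than Example~\ref{ex:HP_implies_Hilbertian}, which are closely related), observes $(1)\Rightarrow(3)$ holds by definition, and invokes the classical equivalence $(2)\Leftrightarrow(3)$ for genus-zero curves, for which you supply the standard Riemann--Roch argument.
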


\begin{proof}
The equivalence $(2)\Leftrightarrow(3)$ holds over every $K$,
$(2)\Rightarrow(1)$ holds because $K$ is Hilbertian
(Proposition~\ref{prop:WHP_implies_Hilbertian}),
and $(1)\Rightarrow(3)$ holds by definition.
\end{proof}

However, the case where $K$ is finitely generated is completely understood:
 
\begin{theorem}\label{thm:curves}
Let $K$ be finitely generated
and let $X$ be a 
 curve over $K$.
 \begin{enumerate}
\item $X$ has HP if and only if $X$ is rational. 
\item  $X$ has WHP if and only if $X$ is rational, or $g_X\leq 1$ and  $X(K)$ is Zariski-dense in $X$.  
\item $X$ has potential HP if and only if $g_X=0$.
\item $X$ has potential WHP if and only if   $g_X\leq 1$.
\end{enumerate}
\end{theorem}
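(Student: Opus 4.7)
\smallskip
\noindent
To prove Theorem~\ref{thm:curves} I would attack the four parts in sequence, since genus invariance under base change lets (3) and (4) be deduced from (1) and (2) applied over finite extensions. The whole argument hinges on two external inputs: the simple-connectedness obstruction of Theorem~\ref{thm:CZ}, and Faltings's theorem in its extension to finitely generated fields of characteristic zero.

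For (1), the forward direction combines two facts. HP forces $X(K)$ to be Zariski-dense in $X$, and hence non-empty; and by Theorem~\ref{thm:CZ} it forces $X_{\overline{K}}$ to be simply connected. For a smooth projective curve this forces $g_X = 0$, since positive-genus curves admit non-trivial \'etale covers, for instance via multiplication-by-$n$ on the Jacobian. A genus-zero curve with a $K$-point is $\mathbb{P}^1_K$. The converse is immediate: $K$ is Hilbertian by Theorem~\ref{thm:Hilbertian}, so $\mathbb{P}^1_K$ has HP by Example~\ref{ex:HP_implies_Hilbertian}. For (2), WHP likewise forces $X(K)$ to be Zariski-dense, and if $g_X \geq 2$ this contradicts Faltings's theorem, so $g_X \leq 1$; conversely, if $X$ is rational then (1) gives HP and hence WHP, while if $g_X = 1$ and $X(K)$ is Zariski-dense, then $X$ has a $K$-point and is an elliptic curve of positive Mordell--Weil rank, so Example~\ref{ex:E_WHP} (whose proof uses only Riemann--Hurwitz and Faltings and therefore carries over verbatim to finitely generated $K$) yields WHP.

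Part (3) then follows from (1) applied over any finite extension $L/K$ in which the genus-zero curve $X$ acquires a rational point and hence becomes isomorphic to $\mathbb{P}^1_L$. For (4), the case $g_X \geq 2$ is excluded by applying Faltings's theorem over every finite $L/K$, and the case $g_X = 0$ reduces to (3). In the remaining case $g_X = 1$, I would first pass to a finite extension $L_1/K$ over which $X$ acquires a point, making $X_{L_1}$ an elliptic curve, and then invoke the standard fact that every elliptic curve over a finitely generated field of characteristic zero acquires positive Mordell--Weil rank over some further finite extension. Combining this with the backward direction of (2), now applied over the resulting finite $L/K$, yields WHP for $X_L$ and hence potential WHP for $X$.

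The main obstacle, as indicated, is invoking Faltings's theorem in the generality of an arbitrary finitely generated field $K$ of characteristic zero rather than just a number field. I would use the standard reduction: spread $X$ out over a regular arithmetic model of $K$ and combine Faltings's theorem for number fields with the function-field Mordell theorem of Grauert and Manin, or alternatively cite one of the direct extensions of Faltings to this setting that are available in the literature. A secondary, smaller obstacle is pinning down a reference for the potential positivity of the Mordell--Weil rank of an elliptic curve over a finitely generated field, which again reduces to the number-field case by a spreading-out argument.
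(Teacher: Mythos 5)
Your proof is correct and follows essentially the same route as the paper: genus zero is handled via Hilbertianity (Proposition~\ref{prop:curve0}), genus at least two is excluded by Faltings's theorem over finitely generated fields (which indeed kills even potential WHP), and genus one combines the elliptic-curve argument of Example~\ref{ex:E_WHP} with the Frey--Jarden theorem \cite{FreyJarden} for the potential cases. The only minor deviation is that you rule out HP for positive genus via the simple-connectedness criterion of Theorem~\ref{thm:CZ}, whereas the paper does so via Example~\ref{ex:abelian_var_not_HP} for genus one and Faltings for higher genus; both are valid, and the ``standard fact'' you flag as needing a reference --- that an elliptic curve over a finitely generated field of characteristic zero acquires positive Mordell--Weil rank after a finite extension --- is exactly the Frey--Jarden result the paper cites.
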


\begin{proof}
Since $K$ is Hilbertian (Theorem \ref{thm:Hilbertian}), the case $g_X=0$ is handled by Proposition \ref{prop:curve0}.
By Faltings's theorem \cite{Faltings}, if $g_X>1$, then $X(L)$ is finite for every finite field extension $L/K$.
In particular, 
if $g_X>1$, then $X$ does not even have potential WHP. 
So assume now that $g_X=1$.
Then $X$ does not have HP (cf.~Example \ref{ex:abelian_var_not_HP}),
but if $X(K)$ is Zariski-dense in $X$, then $X$ has WHP,
since every ramified cover of $X$ is a curve of genus $>1$ (cf.~Example \ref{ex:E_WHP} and \cite[\S2.1]{CZ}). For a proof not involving Faltings's theorem see \cite[Theorem 2]{Zannier}.
Finally, there always is a finite extension $L/K$ with $X(L)$ Zariski-dense
\cite{FreyJarden}.  
\end{proof}

\begin{remark}\label{rem:curve_Faltings}
As mentioned already in Example \ref{ex:E_WHP},
the proof using Faltings's theorem 
in the case $g_X=1$
shows in fact more:
Every infinite subset of $X(K)$ is not strongly thin in $X$.
In the notation of Theorem~\ref{thm:curves}, 
the previous proof shows that 
a not necessarily projective curve $X_0\subseteq X$
has HP, respectively WHP, if and only if $X$ does.  
\end{remark}
 
\begin{remark}
Let $K$ be Hilbertian and $X$ a 
curve over $K$.
To see that the statement of Theorem \ref{thm:curves} does not hold 
in this generality,
recall that if $K$ is also PAC,
all such $X$ have HP (Example \ref{ex:PAC}),
so the implication $\Rightarrow$ in (1)-(4) does not hold.
The following Example \ref{ex:E_over_Ct} shows that also the implication $\Leftarrow$ in (2) does not hold.
However, 
Proposition \ref{prop:curve0} implies that $\Leftarrow$ in (1) and (3)
still hold,
and a positive answer to Question \ref{q:special_Hilbertian} would imply
that also $\Leftarrow$ in (4) holds in general.
\end{remark}

\begin{example}\label{ex:E_over_Ct}
Let $E$ be an elliptic curve over $\mathbb{C}$, 
$K=\mathbb{C}(t)$
and $X=E_K$.
Then $K$ is Hilbertian (Theorem~\ref{thm:Hilbertian}),
$X(K)$ is Zariski-dense in $X$,
but $X$ does not have WHP:
As every rational map $\mathbb{P}^1_K\dashrightarrow X$ is constant, we have that
$X(K)=E(\mathbb{C})$. 
If $\pi\colon Y\to E$ is any ramified cover, then so is $\pi_K\colon Y_K\to X$,
and $\pi_K(Y_K(K))\supseteq \pi(Y(\mathbb{C}))=E(\mathbb{C})=X(K)$,
so $X(K)$ is strongly thin in $X$.
\end{example}

For elliptic curves, 
some positive results have been obtained over other Hilbertian fields like $\mathbb{Q}^{\rm ab}$, see Theorem \ref{thm:bfp} below.

\section{Algebraic groups}
\label{sec:AV}
\label{sec:algebraic_groups}  

\noindent
We now turn to algebraic groups
and discuss what is known regarding the (strong) thinness of certain subsets of rational points.
Let again $K$ be a field of characteristic zero.
An \emph{algebraic group} over $K$ is a group scheme of finite type over $K$. Every algebraic group is smooth and quasi-projective over $K$. 
We again first collect what is known regarding HP and WHP
over Hilbertian fields and over finitely generated fields.
 
 \begin{theorem}\label{thm:alg_groups_1}
     Let $K$ be Hilbertian and $G$ a connected  linear algebraic group over $K$.
     Then $G$ has HP.
 \end{theorem}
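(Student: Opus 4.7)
The plan is to combine $K$-Hilbertianity with the classical fact (Chevalley) that in characteristic zero every connected linear algebraic group $G$ is $K$-unirational. I would fix a dominant morphism $\phi\colon U\to G$ from a dense open $U\subseteq\mathbb{A}^N_K$; since $K$ is Hilbertian, Example~\ref{ex:HP_implies_Hilbertian} together with Remark~\ref{rem:HP_birat} gives that $U$ has HP. Using the Galois-cover reformulation of HP (the proposition immediately following Definition~\ref{def:HP}), it then suffices to show that for every Galois cover $\pi\colon Y\to G$ of degree $>1$, the set of $g\in G(K)$ with $\pi^{-1}(g)$ integral is Zariski dense in $G$.

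The naive attempt is to pull $\pi$ back along $\phi$ and invoke HP of $U$, but this can fail precisely when $Y\times_GU\to U$ splits, i.e.\ when $\phi$ lifts along $\pi$; in that case $\phi(U(K))\subseteq\pi(Y(K))$ yields no new information. To bypass this, I would exploit the group law on $G$ and work instead with the ``universal translate''
\[
\Psi\colon U\times G\to G,\qquad (u,h)\mapsto h\cdot\phi(u),
\]
which is surjective with every fiber isomorphic to $U$. Hence $\Psi$ has geometrically integral generic fiber, so $\Psi^{*}$ makes $K(U\times G)$ a purely transcendental extension of $K(G)$ and $K(Y)\otimes_{K(G),\,\Psi^{*}}K(U\times G)$ is a field; consequently the pullback $\Psi^{*}Y\to U\times G$ is an irreducible cover of degree $>1$. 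Writing $\phi_h:=h\cdot\phi(-)$, the fiber of the composite $\Psi^{*}Y\to U\times G\xrightarrow{p_{2}}G$ over $h\in G$ is precisely $\phi_{h}^{*}Y\to U$, and by upper semicontinuity of the number of irreducible components in a flat family, $\phi_{h}^{*}Y$ remains an irreducible degree-$>1$ cover of $U$ for every $h$ in a Zariski dense open $V\subseteq G$. For any such $h\in V(K)$, HP for $U$ applied to the single cover $\phi_{h}^{*}Y\to U$ yields Zariski dense $u\in U(K)$ with $\pi^{-1}(h\phi(u))$ integral; letting the pair $(u,h)$ vary then produces the desired Zariski dense supply of good $K$-points in $G$.

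The main obstacle is the Bertini-type step, namely ensuring that $V$ actually contains enough $K$-rational points to run the argument above. This requires a second use of Hilbertianity and can be packaged through Theorem~\ref{thm:fibration}(1) applied to the composite $\Psi^{*}Y\to G$: non-thinness of the set of good $u\in U(K)$ in each fiber $\phi_{h}^{*}Y\to U$, combined with non-thinness in $G(K)$ of a suitable $\Sigma\subseteq V(K)$ coming from HP of a $K$-rational open of $G$ provided by $\phi$, reduces the whole argument to HP for $\mathbb{A}^N_K$, which is exactly Hilbertianity of~$K$.
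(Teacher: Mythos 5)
The overall strategy---exploit unirationality of $G$ together with translation to sidestep the obstruction of $\phi$ lifting along a cover---is a reasonable one, and the observation that $\Psi^{*}Y\to U\times G$ is an irreducible cover (because $K(U\times G)/\Psi^{*}K(G)$ is purely transcendental) is correct. The argument breaks at the claim that ``$\phi_{h}^{*}Y$ remains an irreducible degree-$>1$ cover of $U$ for every $h$ in a Zariski-dense open $V\subseteq G$''. Irreducibility of fibers is not a constructible condition (only \emph{geometric} irreducibility is, cf.\ EGA IV, 9.7.7), and the generic fiber of $\Psi^{*}Y\to G$ need not be geometrically irreducible. Concretely, take $G=\mathbb{G}_m$, $U=\mathbb{G}_m\subseteq\mathbb{A}^{1}_K$, $\phi(u)=u^{2}$, and $\pi\colon Y=\mathbb{G}_m\to G$, $y\mapsto y^{2}$. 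Then $\Psi^{*}Y=\{(u,h,y): y^{2}=hu^{2}\}$ is irreducible (even rational), but the fiber over a $K$-point $h$ is irreducible if and only if $h\notin(K^{\times})^{2}$, which is not an open condition: over a Hilbertian $K$ both squares and non-squares are Zariski-dense, so no such dense open $V$ exists. (Flatness of $\Psi^{*}Y\to G$ is also unavailable in general, so the appeal to semicontinuity in a flat family is misplaced regardless.) The proposed repair in the last paragraph does not close the gap: to run Theorem~\ref{thm:fibration}(1) on $\Psi^{*}Y\to G$ you need as input that the set $\Sigma$ of ``good'' $h\in G(K)$ is not thin \emph{in} $G$, which is essentially the statement you are trying to prove; and $\phi(U)$ is a constructible dense subset of $G$ but not birationally a rational open of $G$ unless $G$ itself is rational.

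What makes the theorem go through is a more careful choice of the unirational parametrization: one wants a dominant morphism $\psi\colon V\to G$ with $V$ rational over $K$ \emph{and} with geometrically integral generic fiber, so that no translation is needed and Theorem~\ref{thm:down}(1) applies directly (this is exactly the notion of ``Hilbert-unirational'' recalled after Theorem~\ref{thm:down}). For a torus $T$ such a $\psi$ comes from a quasi-trivial resolution $1\to S\to P\to T\to 1$: $P$ is a quasi-trivial torus, hence $K$-rational, and $P\to T$ has $S$-torsors as fibers, which are geometrically connected. The general connected linear group is then reduced to this case via the Levi decomposition and the big cell, together with the product/fibration results of Section~\ref{sec:preservation}; this is the route taken in~\cite{BFP}, building on~\cite[Cor.~7.15]{CTS}. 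The distinction between ``unirational'' and ``Hilbert-unirational'' is precisely the gap your argument falls into.
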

 
 \begin{proof}
This is \cite[Theorem 4.2]{BFP}, building on the case of reductive $G$ in \cite[Cor.~7.15]{CTS};
in the case where $K$ is a number field,
\cite[Cor.~3.5(ii)]{Sansuc} proves that $G$ even has WWA (cf.~Theorem~\ref{thm:WWA}).
 \end{proof}

\begin{theorem}\label{thm:alg_groups_2}
Let $K$ be finitely generated 
and let $G$ be a connected algebraic group over $K$.
\begin{enumerate}
  \item $G$ has HP if and only if $G$ is linear.
  \item $G$ has WHP if and only if $G(K)$ is Zariski-dense in $G$.
  \item $G$ has potential WHP.
\end{enumerate}
\end{theorem}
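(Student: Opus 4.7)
My plan is to apply Chevalley's structure theorem to write $1\to H\to G\to A\to 1$ with $H$ a connected linear algebraic group and $A$ an abelian variety, and then dispatch each assertion by combining this decomposition with results already established. Since $K$ is finitely generated it is Hilbertian (Theorem~\ref{thm:Hilbertian}), so by Theorem~\ref{thm:alg_groups_1} both $H$ and every $K$-twist of $H$ have HP.

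For (1), the ``if'' direction is Theorem~\ref{thm:alg_groups_1}. For the converse I would argue by contraposition: if $G$ is not linear, then $A$ has positive dimension, and the quotient $\pi\colon G\to A$ is dominant with geometrically integral fibres; so Theorem~\ref{thm:down}(1) would force HP for $A$, contradicting Example~\ref{ex:abelian_var_not_HP}.

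For (2), only the ``if'' direction requires work. Assuming $G(K)$ is Zariski-dense, I set $\Sigma:=\pi(G(K))\subseteq A(K)$, which is then Zariski-dense in $A$. For each $a\in\Sigma$ the fibre $G_a$ is a trivial $H$-torsor, hence $K$-isomorphic to $H$ and so has HP, giving that $G(K)\cap G_a = G_a(K)$ is not thin in $G_a$. The key step will be to show that $\Sigma$ is not strongly thin in $A$; I would deduce this from the abelian-variety analogue of Example~\ref{ex:E_WHP}, asserting that any ramified cover $Z\to A$ maps $Z(K)$ into a proper closed subset of $A$---a Mordell--Lang/Faltings-type consequence over finitely generated fields of characteristic zero. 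Theorem~\ref{thm:fibration}(2), applied with $\Gamma=G(K)$ and this $\Sigma$, then yields WHP for $G$.

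For (3), by (2) it suffices to find a finite extension $L/K$ with $G(L)$ Zariski-dense in $G$. The theorem of Frey--Jarden \cite{FreyJarden} provides a finite $L_1/K$ with $A(L_1)$ Zariski-dense in $A$. I would then pick finitely many $a_1,\dots,a_n\in A(L_1)$ generating a Zariski-dense subgroup, and pass to a finite extension $L/L_1$ that trivializes each $H_{L_1}$-torsor $\pi^{-1}(a_i)$; then $\pi(G(L))\supseteq\{a_1,\dots,a_n\}$ is Zariski-dense in $A$, and combined with Zariski-density of $H(L)$ in each fibre this forces $G(L)$ to be Zariski-dense in $G$. The main obstacle throughout is the Mordell--Lang input invoked in (2) that upgrades Zariski-density of $\Sigma$ to non-strong-thinness in $A$; everything else is bookkeeping with Chevalley's decomposition and the fibration/base-change toolkit from Section~\ref{sec:preservation}.
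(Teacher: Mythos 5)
Parts (1) and (3) of your proposal are correct and match the paper's argument in substance: Chevalley's decomposition, Theorem~\ref{thm:alg_groups_1} plus Theorem~\ref{thm:down}(1) and Example~\ref{ex:abelian_var_not_HP} for (1), and Frey--Jarden plus finite generation of $A(L)$ (Mordell--Weil/Lang--N\'eron) to find $L'$ with $G(L')$ dense for (3).

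The gap is in your proof of (2). You propose to establish that $\Sigma=\pi(G(K))\subseteq A(K)$ is not strongly thin by invoking the claim that for every ramified cover $Z\to A$ of an abelian variety over a finitely generated field of characteristic zero, $\pi(Z(K))$ lies in a proper closed subset. This is \emph{not} a known unconditional theorem; it is not analogous to the genus-one case. For curves, Riemann--Hurwitz forces a ramified cover to have genus $\geq 2$, and Faltings gives \emph{finiteness} of points. In higher dimension, a ramified cover $Z\to A$ is a variety of positive Kodaira dimension (by Kawamata's structure theorem), and non-density of $Z(K)$ is exactly a case of the Bombieri--Lang conjecture -- the paper itself records this as conditional: see the discussion around Question~\ref{q:abelian_dense_strongly_thin}, whose proof under Conjecture~\ref{conjecture:lang_mordellic} is precisely your claimed step. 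What is actually available unconditionally is the much harder Theorem~\ref{thm:alg_groups_fingen}(2) from \cite{CDJLZ}, which does not show that $\pi(Z(K))$ is non-dense; it only produces a coset of a finite-index subgroup of a finitely generated Zariski-dense $\Gamma\leq A(K)$ on which the fibers of each $\pi_i$ are irreducible, hence avoid $\pi_i(Z_i(K))$. Your fibration strategy would go through if you replaced the bogus "ramified covers have non-dense points" input by an appeal to Theorem~\ref{thm:alg_groups_fingen}(2), applied to the finitely generated group $\pi(G(K))\leq A(K)$. The paper instead cites \cite[Theorem~1.6]{Luger2} directly, which handles the general algebraic group in one stroke and in turn builds on \cite{Liu} and \cite{CDJLZ}.
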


\begin{proof}
There is a connected linear algebraic normal subgroup $H\subseteq G$ such that $A=G/H$ is an abelian variety (see \cite[Thm.~1.1]{Conrad}). 
Since the morphism $G\to G/H$ is dominant with geometrically irreducible generic fibre,  we see that  (1) follows from Theorem \ref{thm:alg_groups_1} and Theorem \ref{thm:down}(1) combined with Example \ref{ex:abelian_var_not_HP}, see \cite[Corollary 4.7]{BFP} for the case where $K$ is a number field.    

Next, (2) follows from  \cite[Theorem 1.6]{Luger2}, 
which builds
on \cite{Liu},
and on \cite{CDJLZ},
which in particular proves the special case where $G$ is an abelian variety;
this special case was posed as a question in \cite{CZ}, and partial results had been obtained in 
\cite{Zannier} and \cite{Javanpeykar}.
 
Finally, (3) follows from (2): 
$H(K)$ is Zariski-dense in $H$ (as follows e.g.~from (1))
and $A(L)$ is Zariski-dense in $A$ for some finite extension $L/K$ \cite{FreyJarden},
so since $A(L)$ is finitely generated by the theorems 
 of Mordell--Weil and Lang--N\'eron, there is a finite extension $L'/L$ such that the image of $G(L')$ in $A(L')$ contains $A(L)$,
 and then $G(L')$ is Zariski-dense in $G$.
\end{proof}

\begin{remark}
Since connected algebraic groups are special (Example \ref{ex:special}), 
Theorem \ref{thm:alg_groups_2}(3) confirms a case of
Conjecture \ref{conj:campana_corvaja_zannier}.
A positive answer to Question \ref{q:special_Hilbertian}
would imply that (3) holds for arbitrary Hilbertian $K$,
which in the case where $G$ is an abelian variety was confirmed for function fields $K$ in \cite[Theorem A]{Jav25}.
However, Example \ref{ex:E_over_Ct}
shows that already in the case of elliptic curves,
(2) does not hold even for function fields $K$. 
\end{remark}

In some cases, HP for linear algebraic groups carries over to homogeneous spaces for these groups:

\begin{theorem} \label{thm:homogeneous}
Let $G$ be a connected linear algebraic group over $K$,
and $H\leq G$ an algebraic subgroup.
\begin{enumerate}
\item The quotient variety $G/H$ has HP in each of the following cases:
\begin{enumerate}
\item $K$ is Hilbertian and $H$ is connected.    
\item $K$ is a number field, $G$ is simply connected, and $H$ is abelian.
\end{enumerate}
\item Moreover, $G/H$ has WHP whenever $K$ is Hilbertian.
\end{enumerate}
\end{theorem}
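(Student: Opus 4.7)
For (1)(a), it suffices to apply Theorem~\ref{thm:down}(1) to the quotient morphism $q\colon G\to G/H$. By Theorem~\ref{thm:alg_groups_1}, $G$ has HP; the map $q$ is faithfully flat and surjective, and in characteristic zero a connected algebraic group is geometrically integral, so the generic fiber of $q$, being a torsor under $H$, is geometrically irreducible. Theorem~\ref{thm:down}(1) then yields HP for $G/H$.

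For (2), the plan is to combine (1)(a) with Theorem~\ref{thm:down}(3). Since $H/H^0$ is finite and acts freely by right translation on $G/H^0$ in characteristic zero, the natural map $\phi\colon G/H^0\to G/H$ is an $H/H^0$-torsor, hence a finite \'etale surjection of smooth $K$-varieties; in particular $\phi$ is both smooth and proper. Applying (1)(a) to the connected subgroup $H^0\subseteq G$ (using that $K$ is Hilbertian), $G/H^0$ has HP and hence WHP, and Theorem~\ref{thm:down}(3) transfers WHP along $\phi$ to $G/H$.

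The bulk of the work lies in (1)(b), for which the plan is to invoke the Demeio-type quotient theorem stated just before, taking $X:=G/H^0$ and letting the finite abelian group $\Gamma:=H/H^0$ act freely on $X$ by right translation, so that $G/H\cong X/\Gamma$. Since $K$ is a number field and $\Gamma$ is finite solvable, Shafarevich's theorem supplies an infinite linearly disjoint family $(L_i)_{i\in\mathbb{N}}$ of Galois extensions of $K$ with $\mathrm{Gal}(L_i/K)\cong\Gamma$, together with isomorphisms $\alpha_i$. The principal obstacle is to show that each $\alpha_i$-twist of $X=G/H^0$ again has HP. Here I would exploit that $G$ is simply connected over the number field $K$: the vanishing $H^1(K,G)=1$ (Kneser--Harder--Chernousov) ensures that the twist is again of the form $G/(H^0)^\sigma$ with $(H^0)^\sigma$ a connected $K$-subgroup of $G$, to which case (1)(a) directly applies. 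The Demeio theorem then delivers HP for $G/H$.
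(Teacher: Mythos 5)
Your arguments for (1a) and (2) are correct. For (1a) you apply Theorem~\ref{thm:down}(1) to $G\to G/H$, with $G$ having HP by Theorem~\ref{thm:alg_groups_1} and the generic fibre being a torsor under the connected group $H$ (hence geometrically irreducible); this is essentially the argument behind the cited reference. For (2) you factor through $G/H^0\to G/H$, note it is a finite \'etale $H/H^0$-torsor, and combine (1a) with Theorem~\ref{thm:down}(3); this is exactly the paper's proof.

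For (1b), your route via the Demeio twist theorem is genuinely different from the paper, which simply cites Borovoi's Theorem~1.2 (whose proof goes through a careful analysis, in one version via WWA). Unfortunately your argument has a gap. The key step asserts that ``the vanishing $H^1(K,G)=1$ (Kneser--Harder--Chernousov) ensures that the twist is again of the form $G/(H^0)^\sigma$''. But $H^1(K,G)=1$ is \emph{not} a consequence of $G$ being simply connected over a number field $K$: Kneser--Harder--Chernousov gives $H^1(K_v,G)=1$ only for \emph{non-archimedean} completions $K_v$, and the Hasse principle for simply connected groups then yields $H^1(K,G)\cong\prod_{v\ \mathrm{real}}H^1(K_v,G)$, which is non-trivial whenever $K$ has a real place with $H^1(K_v,G)\neq 1$ (e.g.\ already for suitable $G=\mathrm{Spin}_n$ over $K=\mathbb{Q}$). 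Moreover, even if $H^1(K,G)=1$, to conclude that $X^{\alpha_i}$ has a $K$-point (and hence is of the form $G/H'$ with $H'$ connected) you first need $\alpha_i^{-1}\in H^1(K,H/H^0)$ to lift to a cocycle valued in $H$; the obstruction lies in $H^2(K,H^0)$, and since $H^0$ may contain a torus this group can be large. Your argument does not address this second obstruction either. So the claim that each twist has HP is not established, and the proof of (1b) is incomplete. (It may be repairable in special cases, e.g.\ $K$ totally imaginary and the extension $1\to H^0\to H\to H/H^0\to 1$ split, but not as written.)
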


\begin{proof}
(1a) is \cite[Corollary 4.6]{BFP}, see also \cite[Proposition 1.1]{Borovoi}.
(1b) is \cite[Theorem 1.2]{Borovoi}; in fact, \S4 of that paper contains a second proof, which shows that $G/H$ has WWA.
For (2), note that if $H^\circ$ denotes the connected component of the identity of $H$, then $G/H^\circ$ has HP by (1a), in particular WHP,
and since $G/H^\circ\rightarrow G/H$ is an \'etale cover,
Theorem \ref{thm:down}(3) implies that also $G/H$ has WHP;
the case where $K$ is finitely generated is also proven in
\cite[Theorem 1.8]{Liu}.
\end{proof}

Finally, as already alluded to in Section \ref{sec:curves}, 
the weak Hilbert property for abelian varieties is in some cases preserved under base change in extensions beyond those of Theorem \ref{thm:base_change}: 

\begin{theorem}\label{thm:bfp}   
Let $K$ be a number field and $A$ an abelian variety over $K$.
\begin{enumerate}
    \item If $L=K^{\rm ab}$, then $A_{L}$ has WHP in each of the following cases:
    \begin{enumerate}[$(a)$]
     \item ${\rm dim}(A)=1$
     \item $A$ is geometrically simple and $A(L)$
    has infinite rank (i.e.~${\rm dim}_\mathbb{Q}(A(L)\otimes_\mathbb{Z}\mathbb{Q})=\infty$).
    \item The Frey--Jarden conjecture holds: Every abelian variety over $\mathbb{Q}^{\rm ab}$ has infinite rank.
    \end{enumerate}
    \item If $L=K(B_{\rm tor})$ for an abelian variety $B$ over $K$, then $A_L$ has WHP in the following cases:
    \begin{enumerate}[$(a)$]
       \item $K=\mathbb{Q}$ and ${\rm dim}(A)=1$ 
       \item $A$ is geometrically simple and $A(K\mathbb{Q}^{\rm ab})$
    has infinite rank.
       \item The Frey--Jarden conjecture holds.
    \end{enumerate}
\end{enumerate}
\end{theorem}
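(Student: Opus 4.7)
The plan is to reduce all six subcases to one central implication ($\ast$): \emph{if $A$ is a geometrically simple abelian variety over a field $L$ of characteristic zero with $A(L)$ of infinite Mordell--Weil rank, then $A_L$ has WHP}. With ($\ast$) in hand, each subcase is either an instance of ($\ast$) or reduces to it via an $L$-isogeny decomposition once infinite rank over $L$ is verified.

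To prove ($\ast$), I would fix a ramified cover $\pi\colon Y\to A_L$ and factor it as $Y\to A'\xrightarrow{\alpha}A_L$, where $\alpha$ is the maximal \'etale subcover (an isogeny between abelian varieties, since $A$ is geometrically simple) and $Y\to A'$ has no \'etale subcover. The key technical input is a Chevalley--Weil-type bound: the image of $Y(L)$ in $A'(L)$ is contained in $nA'(L)+F'$ for some integer $n\ge 2$ and some finite set $F'\subseteq A'(L)$ depending only on $\pi$. Pushing forward via $\alpha$ and using $\alpha(nA'(L))\subseteq nA(L)$, one gets $\pi(Y(L))\subseteq nA(L)+F$ for a finite $F\subseteq A(L)$. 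Running this for finitely many covers $\pi_i$ and taking $m=\operatorname{lcm}(n_i)\ge 2$, the union $\bigcup_i\pi_i(Y_i(L))$ lies in a finite union of cosets of $mA(L)$. Since $A(L)$ has infinite rank, $A(L)/mA(L)$ is infinite, so infinitely many cosets of $mA(L)$ lie outside this union; each such coset is Zariski-dense in $A_L$ (being a translate of the finite-index subgroup $mA(L)$ of the Zariski-dense group $A(L)$), so the complement is Zariski-dense, yielding WHP.

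For the subcases: (1b) is exactly ($\ast$); (1a) follows because an elliptic curve is automatically geometrically simple, and $E(K^{\rm ab})$ has infinite rank by the theorem of Rosen--Wong (for $K=\mathbb{Q}$) and Im. For (1c), under Frey--Jarden every abelian variety over $\mathbb{Q}^{\rm ab}\subseteq K^{\rm ab}$ has infinite rank; decomposing $A$ up to $L$-isogeny into $L$-simple factors and further into geometrically simple pieces (with inherited infinite rank, using Galois-invariance), ($\ast$) applies to each piece, and WHP for the whole $A$ is reassembled via isogeny-invariance of WHP (an isogeny $\alpha\colon A\to A'$ is smooth and proper, and applying Theorem~\ref{thm:down}(3) to both $\alpha$ and its dual gives equivalence of WHP between $A$ and $A'$) together with a product argument (Theorem~\ref{thm:product}(2) when $K$ is finitely generated, or directly using the coset structure from ($\ast$) over $L=K^{\rm ab}$). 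Part (2) is parallel: the Weil pairing $B[n]\times B[n]\to\mu_n$ gives $K\mathbb{Q}^{\rm ab}\subseteq K(B_{\rm tor})$, so (2b) and (2c) inherit infinite rank from $K\mathbb{Q}^{\rm ab}$ and from Frey--Jarden respectively, while (2a) invokes the known infinite rank of elliptic curves over $\mathbb{Q}(B_{\rm tor})$ (Rosen--Wong and later refinements).

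The hard part is the Chevalley--Weil step in ($\ast$), that is, the uniform containment of the image of $Y(L)$ in $A'(L)$ inside $nA'(L)+F'$ for suitable $n\ge 2$ and finite $F'$ depending only on the cover. The naive curve-based argument of Example~\ref{ex:E_WHP} via Faltings's theorem is unavailable because $L=K^{\rm ab}$ is not finitely generated, so $Y(L)$ need not be finite even when $g_Y\ge 2$. One must instead exploit the abelian variety structure, pushing the ramification of $Y\to A'$ into a Kummer-theoretic framework that bounds the image modulo multiples uniformly in $L$. A secondary obstacle is the absence of a ``going-down'' theorem for WHP under finite extensions, which forces the decomposition/descent arguments to be carried out directly over $L$ via the $L$-isogeny decomposition rather than after a convenient base change.
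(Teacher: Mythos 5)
Your central reduction ($\ast$)---that a geometrically simple abelian variety over \emph{any} field $L$ of characteristic zero with $A(L)$ of infinite rank has WHP---is false as stated, and the paper itself contains the counterexample: take $L=\mathbb{C}(t)$ and $A=E_L$ for an elliptic curve $E$ over $\mathbb{C}$ (Example~\ref{ex:E_over_Ct}). Then $A$ is geometrically simple, $A(L)=E(\mathbb{C})$ has ${\rm dim}_{\mathbb{Q}}(A(L)\otimes\mathbb{Q})=\infty$, yet $A$ does not have WHP. This shows that the special Galois-theoretic structure of $L=K^{\rm ab}$ or $L=K(B_{\rm tor})$ is essential and cannot be absorbed into a rank hypothesis.

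The specific step that goes wrong in your mechanism for ($\ast$) is the deduction ``Since $A(L)$ has infinite rank, $A(L)/mA(L)$ is infinite.'' This implication is simply false: infinite rank in the sense ${\rm dim}_{\mathbb{Q}}(A(L)\otimes\mathbb{Q})=\infty$ is perfectly compatible with $A(L)$ being divisible, in which case $A(L)/mA(L)=0$. (Again $E(\mathbb{C})$ is the counterexample: it is divisible of infinite rank.) Without infinitude of $A(L)/mA(L)$, your coset-counting argument collapses, because the containment $\bigcup_i\pi_i(Y_i(L))\subseteq F+mA(L)$ becomes vacuous. Establishing that the bad cosets do not exhaust $A(L)$ is precisely the substance of the theorem; it is not a formal consequence of infinite rank. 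The actual proofs in \cite{BFP23} proceed differently: they apply the finitely-generated-field result of \cite{CDJLZ} (Theorem~\ref{thm:alg_groups_fingen}(2)) over finite subextensions $K'$ of $L/K$ to find cosets of finite-index subgroups of $A(K')$ avoiding the covers, and then use a Galois-theoretic argument exploiting that $L/K'$ is \emph{abelian}---roughly, that the Galois closure of a ramified cover of an abelian variety cannot be dominated by an abelian extension of the base field---to ensure irreducibility of fibers survives base change to $L$. This is where the hypothesis $L=K^{\rm ab}$ or $L=K(B_{\rm tor})$ enters irreplaceably. Finally, your reassembly of WHP for non-simple $A$ via a product theorem is also unavailable: Theorem~\ref{thm:product}(2) requires the ground field to be finitely generated, $K^{\rm ab}$ is not, and whether WHP is closed under products over arbitrary Hilbertian fields is exactly the open Question~\ref{q:product}.
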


\begin{proof}
$(1c)$ is \cite[Corollary 1.5]{BFP23}, 
$(1b)$ follows from \cite[Theorem~1.4]{BFP23},
and $(1a)$ follows from $(1b)$ and e.g.~\cite[Theorem 1.1]{Petersen};
all of these use the stronger results from \cite{CDJLZ}, 
see Theorem \ref{thm:alg_groups_fingen} below.
$(2b)$ is \cite[Theorem 5.1]{GP},
which uses \cite{BFP23} and \cite{BFP24};
basically the same proof also shows $(2c)$;
$(2a)$ follows from $(2b)$ and was first proven in \cite[Theorem 1.7]{BFP23}.
\end{proof}

The results up to here
all said that if $G$ is an algebraic group over a field $K$,
then under certain assumptions on $K$ and $G$, $G(K)$ is not thin or not strongly thin.
We now discuss results for subsets of $G(K)$.

\begin{theorem}\label{thm:alg_groups_fingen}
Let $K$ be finitely generated, let $G$ be a connected algebraic group over $K$,
and let $\Gamma\leq G(K)$ be a Zariski-dense finitely generated subgroup.
For $i=1,\dots,r$ let $\pi_i\colon Y_i\rightarrow G$ be a cover that does not dominate a nontrivial \'etale cover of $G$.
Assume that
\begin{enumerate}
    \item $G$ is linear, or
    \item $G$ is an abelian variety.
\end{enumerate}
Then there exists a coset $\Omega$ of a finite index subgroup of $\Gamma$ such that the fiber $\pi_i^{-1}(x)$ is irreducible for each $i=1,\dots,r$ and each $x\in\Omega$.
In particular, $\Gamma$ is not strongly thin in $G$.
\end{theorem}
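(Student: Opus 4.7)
The plan is to reduce to a single Galois cover, translate the hypothesis into a group-theoretic condition, and then invoke deep Diophantine results to control the lifts of $\Gamma$ through finitely many ramified intermediate covers.

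First I would form $\tilde\pi\colon\tilde Y\to G$, the normalization of $G$ in the Galois closure over $K(G)$ of the compositum of the function fields $K(Y_i)$. Set $H=\mathrm{Gal}(K(\tilde Y)/K(G))$ and let $H_i\leq H$ be the subgroup corresponding to $Y_i$. For $x\in G(K)$ unramified in $\tilde\pi$, the fiber $\pi_i^{-1}(x)$ is irreducible if and only if the image $D_x$ of the specialization $\rho_x\colon\mathrm{Gal}(\overline K/K)\to H$ (defined up to conjugation) satisfies $D_xH_i=H$; this fails exactly when $x$ lies in $\pi'(Y'(K))$ for some intermediate cover $Y'=\tilde Y/H_2\to G$ with $H_2<H$ and $H_2H_i\neq H$ (take $H_2=D_x$ if necessary). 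Letting $N\trianglelefteq H$ be the normal subgroup generated by all inertia subgroups of $\tilde\pi$, the cover $\tilde Y/H_2\to G$ is étale precisely when $H_2\supseteq N$. A short check shows that the hypothesis ``no $\pi_i$ dominates a nontrivial étale cover of $G$'' is equivalent to $H_iN=H$ for each $i$; then any proper $H_2$ with $H_2H_i\neq H$ automatically satisfies $H_2\not\supseteq N$, so the corresponding cover $\tilde Y/H_2\to G$ is ramified.

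It therefore suffices to prove: for each of the finitely many ramified intermediate covers $Y'=\tilde Y/H_2\to G$, the set $\pi'(Y'(K))\cap\Gamma$ is contained in a finite union of cosets of subgroups of $\Gamma$ of infinite index. Granted this, a standard combinatorial argument (using that $\Gamma$ is finitely generated, the exceptional cosets are finitely many, and each exceptional subgroup has infinite index) produces $\Gamma_0\leq\Gamma$ of finite index and $\gamma\in\Gamma$ with $\Omega=\gamma\Gamma_0$ disjoint from all such cosets. For every $x\in\Omega$ away from the branch loci of the $\pi_i$ (a proper closed subset of $G$), the fibers $\pi_i^{-1}(x)$ are then irreducible and reduced, hence $\mathrm{Spec}(L)$ for a proper extension $L/K$, containing no $K$-rational point; since $\Omega$ is Zariski-dense (as a coset of a finite-index subgroup of the Zariski-dense group $\Gamma$), this gives the ``in particular'' clause.

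The main obstacle is the Diophantine bound on $\pi'(Y'(K))\cap\Gamma$ for ramified $Y'\to G$, and here the two cases require genuinely different deep inputs. In case (2), with $G$ an abelian variety, this is the content of \cite[Theorem~4]{Zannier}: one applies the Mordell--Lang conjecture (Faltings, Vojta) to see that points of $\Gamma$ lifting to $Y'(K)$ are confined to finitely many translates of proper abelian subvarieties of $G$, whose intersections with the finitely generated group $\Gamma$ are precisely cosets of infinite-index subgroups. In case (1), with $G$ a connected linear algebraic group, one proceeds after decomposing $G$ along its unipotent radical and reductive quotient: the subgroup theorem for algebraic tori (Evertse--Schlickewei--Schmidt, Laurent) plays the role of Mordell--Lang in the reductive part, while the unipotent part is handled by elementary means. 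Throughout, the finite generation of $K$ is used to ensure $\Gamma$ is itself finitely generated so these subgroup-theoretic inputs apply, and the Zariski-density of $\Gamma$ converts the resulting bounds into the final non-thinness statement. The group-theoretic bookkeeping above is routine by comparison; essentially all the substance of the theorem is concentrated in these Diophantine inputs.
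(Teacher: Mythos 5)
Your overall architecture — pass to the Galois closure $\tilde Y\to G$, translate irreducibility of fibers into a condition on decomposition subgroups, note that the ``no nontrivial dominated \'etale cover'' hypothesis forces each exceptional intermediate cover $\tilde Y/H_2\to G$ to be ramified, and then appeal to a Diophantine statement about ramified covers — is the right shape and matches the broad structure of the cited proofs. But the two places where you locate ``essentially all the substance of the theorem'' are precisely where your sketch breaks down, and more seriously than a gloss.

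For case (2), you write that Mordell--Lang shows $\pi'(Y'(K))\cap\Gamma$ lies in finitely many translates of proper abelian subvarieties, but Mordell--Lang constrains $Z(\overline K)\cap\Gamma$ for $Z$ a \emph{subvariety} of $G$; here $\pi'(Y'(K))$ is a priori just a set of $K$-points and can be Zariski-dense (in which case $Z=G$ and Mordell--Lang gives nothing). Bridging this requires the real work of \cite[Theorem 1.4]{CDJLZ}: Kummer theory on $A$, Serre's results on the (almost) independence of the mod-$\ell$ Galois representations, a structure analysis of ramified covers of abelian varieties, and unlikely-intersection inputs, with Mordell--Lang appearing as one ingredient, not as the deduction you describe. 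Citing \cite[Theorem~4]{Zannier} for the general abelian case is also wrong: that result only treats cyclic $\Gamma$ in products of elliptic curves. For case (1), a Levi decomposition gives unipotent radical and a reductive Levi factor, but the latter is a torus \emph{times a semisimple group}, and the subgroup theorem of Evertse--Schlickewei--Schmidt/Laurent applies only to the torus. The semisimple part is where most of the difficulty of Corvaja's theorem and of \cite[Theorem 1.3]{Liu} lies, and you do not address it. Finally, the ``standard combinatorial argument'' deducing a good coset of a finite-index subgroup from finitely many infinite-index cosets is delicate for non-abelian $\Gamma$: one needs, for each exceptional subgroup $H_j$, a finite quotient of $\Gamma$ in which $H_j$ has proper image (i.e.\ $H_j$ must be non-dense in the profinite topology), which is not automatic for a subgroup of infinite index in a finitely generated linear group, and is not the form the cited arguments actually take. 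In short, the reduction steps are sound, but the claimed Diophantine inputs do not deliver what you need without substantial additional argument that is exactly the content of the papers the survey cites.
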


\begin{proof}
(1) is \cite[Theorem 1.3]{Liu}, and 
(2) is \cite[Theorem 1.4]{CDJLZ}.
For cyclic $\Gamma$ and $K$ a number field,
(1) in the case $G=(\mathbb{G}_m)^r$ and (2) in the case $G=E^r$ for an elliptic curve $E$ was proven in \cite{Zannier},
which for $G=(\mathbb{G}_m)^r$ built on results from \cite{DZ} over $\mathbb{Q}^{\rm ab}$, and for $G=E^r$ on ideas from \cite{Zannier_Pisot}.
For a quantitative version in case (1) phrased in terms of random walks see \cite{BSG}.

The ``in particular'' part
in case (1) is \cite[Corollary 1.4]{Liu},
with the special case $G=(\mathbb{G}_m)^r$ and $\Gamma$ cyclic
already in \cite[Theorem 4]{Zannier}.
For case (2),   
see \cite[Proposition 4.17]{CDJLZ} and \cite[Theorem~1.3]{CDJLZ}.
\end{proof}

For subsets of integral points (see Section \ref{sec:integral}), the following question arises:

\begin{question}\label{q:alg_groups_integral}
Let $K$ be a number field,
$S$ a finite set of finite places of $K$,
and $\mathcal{G}$ a connected group scheme over $\mathcal{O}_{K,S}$ of finite type.
Is $\mathcal{G}(\mathcal{O}_{K,S})$
strongly thin in $\mathcal{G}_K$ if and only if it is not Zariski-dense?
\end{question}

Note that Theorem \ref{thm:alg_groups_fingen} gives a positive answer to Question \ref{q:alg_groups_integral} whenever $\mathcal{G}_K$ is linear (or an abelian variety, but in that case $\mathcal{G}(\mathcal{O}_{K,S})=\mathcal{G}(K)$).
The current best result towards a general answer is the following:

\begin{theorem}\label{thm:potential_WHP_algebraic_groups}
    Let $G$ be a connected algebraic group over a number field $K$ with $G(K)$ dense.  Then there is a finite set of finite places $S$ of $K$ and a model $\mathcal{G}$ for $G$ over $\mathcal{O}_{K,S}$ such that $\mathcal{G}(\mathcal{O}_{K,S})$ is not strongly thin in $G$. 
\end{theorem}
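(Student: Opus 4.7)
The plan is to apply Chevalley's structure theorem to reduce to the linear and abelian cases handled by Theorem~\ref{thm:alg_groups_fingen}, and to combine them via a fibration argument over the abelian quotient.

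First I would write $G$ as an extension $1\to H\to G\to A\to 1$ with $H$ a connected linear algebraic group and $A$ an abelian variety. By spreading out, for a sufficiently large finite set of finite places $S$ there is a smooth group scheme model $\mathcal{G}$ over $\mathcal{O}_{K,S}$ fitting in an exact sequence $1\to\mathcal{H}\to\mathcal{G}\to\mathcal{A}\to 1$, with $\mathcal{A}$ an abelian scheme and $\mathcal{H}$ smooth affine. Since $\mathcal{A}$ is proper, $\mathcal{A}(\mathcal{O}_{K,S})=A(K)$ is finitely generated by Mordell--Weil, and it is Zariski-dense in $A$ because $G(K)$ is Zariski-dense in $G$ by hypothesis. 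By Theorem~\ref{thm:alg_groups_1}, $H(K)$ is Zariski-dense in $H$; since $H(K)=\bigcup_S\mathcal{H}(\mathcal{O}_{K,S})$ and $H$ is noetherian, after enlarging $S$ the group $\mathcal{H}(\mathcal{O}_{K,S})$ is itself Zariski-dense in $H$, and it is finitely generated by the Borel--Harish-Chandra theorem. A final enlargement of $S$ guarantees that lifts of a finite generating set of $A(K)$ lie in $\mathcal{G}(\mathcal{O}_{K,S})$, so $\Gamma:=\mathcal{G}(\mathcal{O}_{K,S})$ is finitely generated, Zariski-dense in $G$, and projects onto a finite-index subgroup $\Sigma\leq A(K)$.

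To show $\Gamma$ is not strongly thin, let $\pi_i\colon Y_i\to G$, $i=1,\dots,r$, be ramified covers, and take the Stein factorizations $Y_i\to Y_i^{st}\to A$ of the composed morphisms $Y_i\to G\to A$. If $Y_i^{st}\to A$ is a nontrivial ramified cover, I apply Theorem~\ref{thm:alg_groups_fingen}(2) to $A$ with $\Sigma$ to obtain a coset of a finite-index subgroup of $\Sigma$ over which the fibers of $Y_i^{st}\to A$ are all irreducible, so that $\pi_i(Y_i(K))\cap f^{-1}(a)=\emptyset$ for every such $a$. If $Y_i^{st}\to A$ is a nontrivial \'etale isogeny, I reduce to the previous situation by pulling $\mathcal{G}\to\mathcal{A}$ back along the isogeny, leaving only the case $Y_i^{st}=A$, in which the restriction of $\pi_i$ to a generic fiber is a ramified cover of $G_a\cong H$; applying Theorem~\ref{thm:alg_groups_fingen}(1) to the Zariski-dense finitely generated subgroup $\mathcal{H}(\mathcal{O}_{K,S})\leq H(K)$ provides, in each fiber, a coset of a finite-index subgroup of $\mathcal{H}(\mathcal{O}_{K,S})$ avoiding $(\pi_i)_a((Y_i)_a(K))$. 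Intersecting the finitely many finite-index cosets arising from the two cases yields a subset of $\Gamma$ that is Zariski-dense in $G$ and disjoint from $\bigcup_i\pi_i(Y_i(K))$.

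The main obstacle is the fibered argument in the last case: Theorem~\ref{thm:mixed_fibration}(2) cannot be applied as a black box, since its hypothesis requires $\Gamma\cap G_a$ to be \emph{not thin} in $G_a$, whereas $\mathcal{H}(\mathcal{O}_{K,S})$ is only guaranteed to be \emph{not strongly thin} in $H$ (and can genuinely be thin, for instance when $H=\mathbb{G}_m$) --- precisely the gap flagged by Question~\ref{question:fibrations}. Overcoming this calls for a uniform analysis of the finite-index cosets produced by Theorem~\ref{thm:alg_groups_fingen}(1) as the base point $a\in\Sigma$ varies, exploiting the group-scheme structure of $\mathcal{G}\to\mathcal{A}$ so that the cosets can be chosen uniformly in $a$, and then combining them with the finite-index coset on the base supplied by Theorem~\ref{thm:alg_groups_fingen}(2).
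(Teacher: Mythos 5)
Your overall plan---Chevalley decomposition $1 \to H \to G \to A \to 1$, spreading out to an integral group-scheme model, Stein factorization over $A$, and combining Theorem~\ref{thm:alg_groups_fingen}(1) (linear case) with Theorem~\ref{thm:alg_groups_fingen}(2) (abelian case)---is the right skeleton and consistent with the strategy underlying the result cited by the paper (Luger, building on Liu and Corvaja--Demeio--Javanpeykar--Lombardo--Zannier). You have also correctly and honestly identified where the argument breaks as a black-box reduction: the mixed fibration theorem (Theorem~\ref{thm:mixed_fibration}(2)) requires the fibers to carry not-thin sets, but $\mathcal{H}(\mathcal{O}_{K,S})$ can genuinely be thin in $H$ (the unit group $\mathcal{O}_{K,S}^\times$ inside $\mathbb{G}_m$ being the basic example), so the fibration theorem does not apply, and the variant with ``not strongly thin'' fibers is exactly the open Question~\ref{question:fibrations}.

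Filling that gap is the entire technical content of the cited theorem: one must show that the finite-index cosets in $H$ produced by Theorem~\ref{thm:alg_groups_fingen}(1) can be chosen \emph{uniformly} as the base point varies over the coset in $A(K)$ produced by Theorem~\ref{thm:alg_groups_fingen}(2). The group-extension structure of $\mathcal{G}\to\mathcal{A}$ is what makes this possible---the restrictions of a fixed cover $\pi_i\colon Y_i\to G$ to the various fibers $G_a$ are related by translation, hence have bounded ``type'' as $a$ ranges over a suitable coset---and this uniformization over group schemes is the heart of Corvaja's, Liu's, and Luger's arguments, none of which is supplied in your sketch. There is also a secondary issue you do not address: in the case $Y_i^{st}=A$, the restriction of $\pi_i$ to a generic fiber may be a nontrivial connected \emph{\'etale} cover of $H$ (this happens when the branch locus of $\pi_i$ fails to dominate $A$), in which case it violates the hypothesis of Theorem~\ref{thm:alg_groups_fingen}(1) (``does not dominate a nontrivial \'etale cover''), so one has to pass through the ``in particular'' part of that theorem and perform a further uniform-in-$a$ reduction. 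In short: right approach, accurate and honest identification of the central obstacle, but the key technical step is not carried out---which is precisely where the paper itself delegates to Luger's theorem rather than proving anything.
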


\begin{proof}
This is
\cite[Theorem~1.6]{Luger2},
using \cite{Liu} (which itself builds on \cite{CorvajaAlgebraicGroups}) and \cite{CDJLZ}.   
\end{proof}

\begin{remark} 
In the case where $G$ is linear, \cite[Theorem~1.3]{Luger4} proves
that the statement of Theorem~\ref{thm:potential_WHP_algebraic_groups} still holds
with $G$ replaced by the complement of a closed subset of codimension at least two, after a finite extension of the base field $K$.
As the complement of a closed subset of codimension at least two in a smooth special variety is still special \cite[Theorem~G]{BJL},
this is in line with the quasi-projective extension of Conjecture~\ref{conj:campana_corvaja_zannier} (cf.~\cite[Conjecture 1.12]{BJL}),
which predicts in particular that
if the rational points on a  smooth projective variety are not thin (resp.~not strongly thin), then the integral points on any open subset with complement of codimension at least two are also not thin  (resp.~not strongly thin), after possibly extending the base field and choosing an appropriate model.
This would follow in certain cases from a positive answer to Wittenberg's question on varieties with strong approximation \cite[Question~2.11]{Wittenberg}.  
Similarly, the fact that  potential density of integral points of a smooth variety should be inherited by open subsets with complement of codimension at least two is also predicted by Hassett--Tschinkel's puncturing problems \cite[Problem~2.11 and Problem~2.14]{HT01}. 
\end{remark}

Motivated by the case of elliptic curves over number fields (Remark \ref{ex:E_WHP}), in the situation of Theorem~\ref{thm:alg_groups_fingen}(2), even smaller subsets should be not strongly thin:

\begin{question}\label{q:abelian_dense_strongly_thin}
Let $A$ be an abelian variety over a number field $K$.
Is every Zariski-dense subset of $A(K)$ not strongly thin in $A$?
\end{question}

\begin{proposition}
Lang's conjecture (Conjecture \ref{conjecture:lang_mordellic})
implies a positive answer to Question \ref{q:abelian_dense_strongly_thin}
\end{proposition}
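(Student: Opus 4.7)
The plan is to show that, assuming Lang's conjecture, for every ramified cover $\pi\colon Y \to A$ with $Y$ normal, the image $\pi(Y(K))$ lies in a proper closed subset of $A$. Granting this for each member of any finite collection $\pi_1,\dots,\pi_n$ of ramified covers, the finite union $\bigcup_i \pi_i(Y_i(K))$ lies in a proper closed subset $T\subsetneq A$. For any Zariski-dense $\Sigma\subseteq A(K)$, we have $\Sigma\setminus\bigcup_i\pi_i(Y_i(K))\supseteq\Sigma\setminus T$, which is Zariski-dense in the irreducible variety $A$; hence $\Sigma$ is not strongly thin.

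To prove the claim, I would fix $\pi\colon Y\to A$ ramified with $Y$ normal (and projective, being finite over the projective $A$), and let $r\colon\tilde Y\to Y$ be a resolution of singularities, with $\tilde\pi := \pi\circ r\colon \tilde Y\to A$. Points in the smooth locus of $Y$ lift to $\tilde Y(K)$, while the singular locus has codimension $\geq 2$ in $Y$ and its image in $A$ has dimension $<\dim A$. So it suffices to show $\tilde\pi(\tilde Y(K))$ lies in a proper closed subset of $A$.

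The crucial geometric step is to show $\kappa(\tilde Y)\geq 1$. Pullback of global $1$-forms from $A$ injects $H^0(A,\Omega^1_A)$ into $H^0(\tilde Y,\Omega^1_{\tilde Y})$, so $q(\tilde Y)\geq\dim A=\dim\tilde Y$, giving $q(\tilde Y)=\dim\tilde Y$. If $\kappa(\tilde Y)=0$, Kawamata's characterization of abelian varieties implies $\tilde Y$ is birational to its Albanese $B$, an abelian variety. The universal property factors $\tilde\pi$ as $\tilde Y\to B\to A$, and the second map is a surjective morphism of abelian varieties of equal dimension, hence an \'etale isogeny. But $B$ is normal with function field $K(Y)$ and finite \'etale over $A$, so it is the normalization of $A$ in $K(Y)$; by uniqueness of normalization, $Y\cong B$ and $\pi$ is \'etale, contradicting ramification. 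Hence $\kappa(\tilde Y)\geq 1$.

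Now invoke the Iitaka fibration: there exist a dense open $U\subseteq\tilde Y$ and a morphism $U\to I$ to a smooth projective variety $I$ of general type with $\dim I=\kappa(\tilde Y)\geq 1$. By Lang's conjecture applied to $I$, there is a proper closed subset $\Delta_I\subsetneq I_{\overline K}$ with $I(K)\setminus\Delta_I$ finite. Every $K$-point of $\tilde Y$ either maps into $\Delta_I\cup\text{(finite)}$ (whose preimage in $U$ is proper closed, and whose Zariski closure in $\tilde Y$ is a proper closed subset) or lies in the proper closed subset $\tilde Y\setminus U$. Thus $\tilde Y(K)$ lies in a proper closed subset of $\tilde Y$, whose image under $\tilde\pi$ lies in a proper closed subset of $A$ (using $\dim\tilde Y=\dim A$). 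The main obstacle is the step $\kappa(\tilde Y)\geq 1$, which hinges on Kawamata's deep characterization of abelian varieties together with purity of the branch locus and uniqueness of normalization; the remaining combination of the Iitaka fibration with Lang's conjecture is standard.
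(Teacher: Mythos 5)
Your reduction to showing that $\pi(Y(K))$ is not Zariski--dense in $A$ for each ramified cover, your resolution and pull-back-of-forms argument giving $q(\tilde Y)=\dim\tilde Y$, and your derivation of $\kappa(\tilde Y)\geq 1$ via Kawamata's characterization of abelian varieties plus uniqueness of normalization are all correct and in fact flesh out a step the paper only asserts. The gap is in the next step: you claim that the Iitaka fibration $U\to I$ of $\tilde Y$ has base $I$ \emph{of general type}, and you label this ``standard.'' It is not. The Iitaka fibration only guarantees $\dim I=\kappa(\tilde Y)$ and that the general fiber has Kodaira dimension zero; there is no general theorem saying $\kappa(I)=\dim I$ (subadditivity $C_{n,m}$ gives $\kappa(I)\leq\dim I$, and $C^+_{n,m}$ gives nothing sharper here). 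The fact that, for a variety of maximal Albanese dimension, one can reach a base of general type is exactly the content of Kawamata's structure theorem (his Theorems~26--27, which the paper cites), and that theorem produces a general-type target only \emph{after passing to an \'etale cover} $Z'\to Z$. That \'etale cover is not cosmetic: it forces the paper to invoke the Chevalley--Weil theorem to descend non-density of $Z'(L)$ to non-density of $Z(K)$, and you have omitted both steps. In short, you correctly used Kawamata's ``$\kappa=0$ implies abelian'' theorem for the first half, but then silently assumed a much stronger form of Kawamata's structure theorem, applied to $\tilde Y$ itself rather than to an \'etale cover, without justification and without the Chevalley--Weil descent that the honest statement requires.
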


\begin{proof}  
It suffices to show that $\pi(Z(K))$ is not Zariski-dense in $A$
for any ramified cover $\pi\colon Z\to A$ (with $Z$ normal). 
By \cite[Theorem~26~and~Theorem~27]{Kawamata}, replacing $K$ by a finite field extension if necessary, there is an \'etale cover $Z'\to Z$ and a surjective morphism $Z'\to X$, where $X$ is a normal projective variety 
with $\kappa(X)={\rm dim}(X)=\kappa(Z)>0$.
In particular, $X$ is positive-dimensional and of general type. 
Conjecture \ref{conjecture:lang_mordellic} implies that 
$X(L)$ is not Zariski-dense in $X$ for every finite extension $L/K$,
hence $Z'(L)$ is not Zariski-dense in $Z'$.
By the Chevalley--Weil theorem \cite[Theorem~3.8]{CDJLZ}, it follows that $Z(K)$ is not Zariski-dense in $Z$, hence $\pi(Z(K))$ is not Zariski-dense in $A$. 
\end{proof}

\section{Surfaces}   
\label{sec:surfaces}

\noindent
Let $K$ be a field of characteristic zero
and $X$ a {\em surface} over $K$, by which we always mean
a smooth projective geometrically integral variety over $K$ of dimension two.
We discuss what is known regarding HP and WHP for $X$
depending on the Kodaira dimension $\kappa(X)\in\{-\infty,0,1,2\}$.
We recall that 
the Kodaira dimension is invariant
under birational equivalence and base change \cite[Proposition 9.1.2]{Poonen}
and
$X_{\overline{K}}$ is birationally equivalent to a minimal surface \cite[Theorem~6.3]{Badescu},
to which the Enriques--Kodaira classification applies,
cf.~\cite{Badescu, Beauville, Liedtke}.
 
\subsection{Surfaces with $\kappa(X)=-\infty$}
\label{sec:kappaminusinfty}
   By a theorem of Enriques \cite[Theorem~13.2]{Badescu},  
  $\kappa(X)=-\infty$ if and only if $X_{\overline{K}}$ is {\em birationally ruled},
i.e.~$X_{\overline{K}}\sim\mathbb{P}^1_{\overline{K}}\times C$ 
for a (smooth projective) curve $C$.  
  
The {\em irregularity} $q(X)=\dim_K \mathrm{H}^1(X,\mathcal{O}_X)$ of $X$ is a birational invariant \cite[Remark 6.4]{Badescu} and invariant under base change,
so if $X_{\overline{K}}\sim\mathbb{P}^1_{\overline{K}}\times C$,
then $q(X)=g_C$ \cite[Corollary V.2.5]{Hartshorne}.
 
We start with the most simple observation regarding $q(X)=0$:
\begin{proposition}
If $K$ is Hilbertian, $\kappa(X)=-\infty$ and $q(X)=0$, then $X$ has potential HP. 
\end{proposition}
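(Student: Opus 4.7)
The plan is to reduce to the case where $X$ becomes rational after a finite field extension, and then invoke Example~\ref{ex:HP_implies_Hilbertian}.

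First I would unwind the geometric hypotheses. Since $\kappa(X)=-\infty$, the Enriques theorem cited in the paragraph preceding the proposition gives a birational equivalence $X_{\overline K}\sim \mathbb{P}^1_{\overline K}\times C$ for some smooth projective curve $C$ over $\overline K$. Because the irregularity is a birational invariant that is also invariant under base change, and since $q(\mathbb{P}^1_{\overline K}\times C)=g_C$ by the K\"unneth-type computation already quoted, the hypothesis $q(X)=0$ forces $g_C=0$, hence $C\cong\mathbb{P}^1_{\overline K}$. Thus $X_{\overline K}\sim\mathbb{P}^2_{\overline K}$, i.e.\ $X$ is geometrically rational.

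Next I would descend the birational equivalence to a finite extension of $K$. A birational map $X_{\overline K}\dashrightarrow\mathbb{P}^2_{\overline K}$ (together with an inverse) is defined by finitely many rational functions with finitely many coefficients in $\overline K$, so it is already defined over some finite subextension $L/K$ inside $\overline K$. Consequently $X_L\sim\mathbb{P}^2_L$, that is, $X_L$ is a rational $L$-variety.

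Now I apply Hilbertian hypotheses. By Theorem~\ref{thm:permanence}(1) (the finite case, which is classical and recalled there), any finite extension of the Hilbertian field $K$ is again Hilbertian, so $L$ is Hilbertian. Example~\ref{ex:HP_implies_Hilbertian} then says that every rational variety over a Hilbertian field has HP, so $X_L$ has HP. By Definition of potential HP, $X$ therefore has potential HP.

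The only mildly nontrivial step is the descent of the birational equivalence to a finite extension; everything else is a direct invocation of results already stated in the excerpt. No step seems to present a genuine obstacle.
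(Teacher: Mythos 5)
Your proof is correct and follows essentially the same route as the paper: identify $X$ as geometrically rational via the Enriques classification and $q(X)=0$, descend to a finite extension $L/K$ where $X_L$ becomes rational, note that $L$ is still Hilbertian, and conclude HP for $X_L$. The paper merely cites Proposition~\ref{prop:WHP_implies_Hilbertian} instead of Example~\ref{ex:HP_implies_Hilbertian} for the final step, but the content is the same.
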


\begin{proof}
Since $X_{\overline{K}}\sim\mathbb{P}^1_{\overline{K}}\times\mathbb{P}^1_{\overline{K}}$ is rational, there exists $L/K$ finite with $X_L$ rational, and $L$ is Hilbertian,
so $X_L$ has HP (Proposition \ref{prop:WHP_implies_Hilbertian}).    
\end{proof}

Similarly easy, the question whether $X$ has HP or WHP
is well-understood if already $X$ is birationally ruled (and not only $X_{\overline{K}}$):

\begin{proposition}\label{prop:uniruled}
    Let $K$ be finitely generated and let $X$ be a surface that is birationally ruled over $K$.
    \begin{enumerate}
        \item $X$ has HP if and only if $q(X)=0$ and $X(K)$ is Zariski-dense in $X$.
        \item $X$ has WHP if and only if $q(X)\leq 1$ and $X(K)$ is Zariski-dense in $X$.
        \item $X$ has potential HP if and only if $q(X)=0$.
        \item $X$ has potential WHP if and only if $q(X)\leq 1$.
    \end{enumerate}
\end{proposition}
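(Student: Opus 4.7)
The plan is to reduce to the case $X = \mathbb{P}^1_K \times C$ for a smooth projective geometrically integral curve $C$ over $K$, and then combine Theorem \ref{thm:curves} on curves with the product and down-morphism results from Section \ref{sec:preservation}. Since $X$ is birationally ruled over $K$, such a $C$ exists with $X \sim \mathbb{P}^1_K \times C$; both sides are smooth proper geometrically integral, so the birational invariance of HP (Remark \ref{rem:HP_birat}) and of WHP (Proposition \ref{prop:WHP_birat}) lets me replace $X$ by the product. The irregularity $q$ is a birational and base-change invariant, so $q(X) = g_C$; and since $\mathbb{P}^1(K)$ is Zariski-dense in $\mathbb{P}^1_K$, the set $X(K)$ is Zariski-dense in $X$ if and only if $C(K)$ is Zariski-dense in $C$.

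For parts (1) and (2), I work with the second projection $\pi \colon X \to C$, which is smooth and proper, and dominant with geometrically irreducible generic fiber $\mathbb{P}^1_{K(C)}$. For the forward directions, Theorem \ref{thm:down}(1) (respectively Theorem \ref{thm:down}(3)) transfers HP (resp.\ WHP) from $X$ to $C$, and then Theorem \ref{thm:curves}(1) (resp.\ (2)) yields $g_C = 0$ (resp.\ $g_C \leq 1$) together with the density of $C(K)$. For the reverse in (1), $q(X) = 0$ and $C(K) \neq \emptyset$ force $C \cong \mathbb{P}^1_K$, so $X \sim \mathbb{A}^2_K$ is rational and has HP by Example \ref{ex:HP_implies_Hilbertian} (using that $K$ is Hilbertian). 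For the reverse in (2), $C$ has WHP by Theorem \ref{thm:curves}(2), $\mathbb{P}^1_K$ has HP hence WHP, and the product theorem \ref{thm:product}(2) then delivers WHP for $X$.

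Parts (3) and (4) follow by the same arguments carried out over a suitable finite extension $L/K$. For the forward implications, I apply (1) or (2) to $X_L$ for any $L/K$ over which $X_L$ has HP or WHP, using that $q$ is invariant under base change. For the reverse in (3), $g_C = 0$ lets me choose a finite $L/K$ with $C(L) \neq \emptyset$ (e.g.\ the residue field at a closed point of $C$), giving $C_L \cong \mathbb{P}^1_L$ and thus $X_L$ rational, with HP since $L$ is finitely generated and hence Hilbertian. For the reverse in (4), Theorem \ref{thm:curves}(4) supplies a finite extension $L/K$ over which $C_L$ has WHP, and Theorem \ref{thm:product}(2) then gives WHP for $X_L = \mathbb{P}^1_L \times C_L$. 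The only real bookkeeping obstacle is verifying the smooth-proper and finitely-generated hypotheses of each cited result, but these hold throughout; I do not anticipate any substantive difficulty beyond assembling these pieces.
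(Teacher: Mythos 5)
Your proof is correct and follows essentially the same strategy as the paper: reduce to $X = \mathbb{P}^1_K \times C$ by birational invariance, transfer (W)HP between $X$ and $C$ via the preservation results of Section~\ref{sec:preservation}, and then apply Theorem~\ref{thm:curves}. The only small difference is that the paper handles the reverse directions uniformly via the fibration theorem (Theorem~\ref{thm:mixed_fibration}), whereas you handle the HP reverse direction by noting $C\cong\mathbb{P}^1_K$ directly and the WHP reverse direction via the product theorem (Theorem~\ref{thm:product}(2)); both routes are valid and lead to the same conclusion.
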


\begin{proof}
Since HP and WHP are birational invariants (of smooth projective varieties, see Remark \ref{rem:HP_birat} and Proposition \ref{prop:WHP_birat}),
we can assume that $X=\mathbb{P}^1_K\times C$ for a curve $C$ over $K$ of genus $g_C=q(X)$.
By Theorem \ref{thm:mixed_fibration} and Theorem \ref{thm:down}
applied to the projection $X\rightarrow C$,
$X$ has HP or WHP if and only if $C$ has.
Moreover, $X(K)$ is Zariski-dense in $X$ if and only if $C(K)$ is Zariski-dense in $C$.
Therefore, all claims follow from Theorem \ref{thm:curves} for the curve $C$.
\end{proof}

\begin{corollary}
Let $K$ be finitely generated and let $X$ be a surface over $K$ with $\kappa(X)=-\infty$. Then $X$ has potential WHP if and only if $q(X)\leq 1$.
\end{corollary}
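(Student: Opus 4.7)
The plan is to reduce the corollary to Proposition~\ref{prop:uniruled} by first passing to a finite extension $L/K$ over which $X$ is birationally ruled (not only after base change to $\overline{K}$). Since potential WHP is, by definition, insensitive to finite extensions, the corollary will then follow directly from the characterization for rationally ruled surfaces already proven in Proposition~\ref{prop:uniruled}(4).

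The heart of the argument is a descent step. By the theorem of Enriques cited in Section~\ref{sec:kappaminusinfty}, the hypothesis $\kappa(X)=-\infty$ provides a smooth projective curve $C$ over $\overline{K}$ and a birational equivalence $\varphi\colon X_{\overline{K}}\dashrightarrow\mathbb{P}^1_{\overline{K}}\times C$. The curve $C$ is defined over $\overline{K}$ by finitely many equations, and the rational maps realizing $\varphi$ and its inverse involve only finitely many coefficients in $\overline{K}$. All these coefficients therefore lie in a finitely generated subextension of $\overline{K}/K$; since $\overline{K}/K$ is algebraic, this subextension is a finite extension $L/K$. Over $L$ one then obtains a model $C_L$ of $C$ together with a birational equivalence $X_L\sim \mathbb{P}^1_L\times C_L$, so that $X_L$ is birationally ruled over $L$ in the sense of Proposition~\ref{prop:uniruled}.

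Since $K$ is finitely generated and $L/K$ is finite, $L$ itself is finitely generated, so Proposition~\ref{prop:uniruled}(4) applies to $X_L$ and gives: $X_L$ has potential WHP over $L$ if and only if $q(X_L)\leq 1$. The irregularity is invariant under base change (as recalled at the start of Section~\ref{sec:kappaminusinfty}), so $q(X_L)=q(X)$, and the condition reads $q(X)\leq 1$. It remains to observe that ``$X_L$ has potential WHP over $L$'' is equivalent to ``$X$ has potential WHP over $K$''. One direction is immediate: any finite $N/L$ witnessing WHP for $X_N$ is also a finite extension of $K$. For the converse, if $X_M$ has WHP for some finite $M/K$, then $LM/M$ is a finite (hence finitely generated) extension, so by Theorem~\ref{thm:base_change}(1) the variety $X_{LM}=(X_M)_{LM}$ still has WHP, and $LM/L$ is finite, witnessing potential WHP of $X_L$ over $L$.

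The only genuine obstacle is the descent step, but this is a routine spreading-out: $C$ and $\varphi$ are of finite type over $\overline{K}$, involving finitely many coefficients, and the algebraicity of $\overline{K}/K$ forces these coefficients to lie in a finite subextension. Everything else is a direct invocation of Proposition~\ref{prop:uniruled}, the base change result Theorem~\ref{thm:base_change}(1), and the very definition of potential WHP.
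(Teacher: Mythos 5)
Your proof is correct and follows the same route as the paper's: descend the ruling $X_{\overline{K}}\sim\mathbb{P}^1_{\overline{K}}\times C$ to a finite extension $L/K$, then invoke Proposition~\ref{prop:uniruled}(4) together with the invariance of $q$ and of potential WHP under passing to $L$. You supply more detail than the paper (the spreading-out argument for the existence of $L$, and the use of Theorem~\ref{thm:base_change}(1) to show potential WHP over $K$ and over $L$ agree), but these are exactly the steps the paper leaves implicit.
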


\begin{proof}
There is a finite extension $L/K$ such that $X_L$ is birationally ruled over $L$,
and $X$ has potential WHP if and only if $X_L$ does,
so the claim follows from Proposition \ref{prop:uniruled}(4).
\end{proof}
 
If $X_{\overline{K}}$ is birationally ruled, but $X$ itself is not,
then (3) and (4) of Proposition \ref{prop:uniruled} still hold,
as do $\Rightarrow$ in (1) and (2),
and we now discuss what can be said regarding $\Leftarrow$ in (1) and (2).

\begin{proposition}\label{prop:q_is_one} 
Let $K$ be finitely generated and $X$ a surface over $K$ with $\kappa(X)=-\infty$ and $q(X)=1$.
If $X(K)$ is Zariski-dense in $X$, then $X$ has WHP.
\end{proposition}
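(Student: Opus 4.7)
The plan is to exhibit a conic-bundle structure $f\colon X\to E$ over an elliptic curve $E$ and then to apply the mixed fibration theorem for strongly thin sets (Theorem~\ref{thm:fibration}(2)).

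Since $X(K)\neq\emptyset$, fix a base point in $X(K)$ and consider the Albanese morphism $f\colon X\to E:=\mathrm{Alb}(X)$. As $\dim E=q(X)=1$ and $E$ has the origin as a $K$-rational point, $E$ is an elliptic curve over $K$; and because $q(X)>0$, $f$ is nonconstant, hence surjective. I will use the classical fact that the Albanese morphism of a smooth projective variety has geometrically connected fibers. Since $X_{\overline{K}}$ is birationally ruled and $q(X_{\overline{K}})=1$, we have $X_{\overline{K}}\sim \mathbb{P}^1_{\overline{K}}\times C$ for a smooth projective curve $C$ of genus $1$; under the Albanese the curve $C$ is identified (up to isogeny) with $E_{\overline{K}}$, and the generic fiber of $f_{\overline{K}}$ is $\mathbb{P}^1$. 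Hence the generic fiber of $f$ is a smooth projective geometrically integral curve of genus zero over $K(E)$, i.e., a conic. By generic smoothness in characteristic zero, there is a dense open $U\subseteq E$ over which $f$ is smooth, so that $X_e$ is a smooth projective geometrically integral curve of genus $0$ over $K$ for every $e\in U(K)$.

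Set $\Sigma:=f(X(K))\cap U(K)$. Since $X(K)$ is Zariski-dense in $X$ and $f$ is surjective, $f(X(K))$ is Zariski-dense in $E$; as $(E\setminus U)(K)$ is finite, $\Sigma$ is infinite. By Remark~\ref{rem:curve_Faltings}, over a finitely generated field of characteristic zero every infinite subset of $E(K)$ is not strongly thin in $E$, so $\Sigma$ is not strongly thin in $E$. For each $e\in\Sigma$ we can pick $x\in X(K)$ with $f(x)=e$; then $X_e$ is a smooth geometrically integral genus-zero curve over $K$ with a $K$-rational point, so $X_e\cong \mathbb{P}^1_K$. As $K$ is Hilbertian by Theorem~\ref{thm:Hilbertian}, $X_e$ has HP (Example~\ref{ex:HP_implies_Hilbertian}), and in particular $X(K)\cap X_e = X_e(K)$ is not thin in $X_e$. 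Applying Theorem~\ref{thm:fibration}(2) to $f$ with $\Gamma=X(K)$ and the $\Sigma$ just constructed shows that $X(K)$ is not strongly thin in $X$, i.e.\ $X$ has WHP.

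The main obstacle is the geometric setup in the first paragraph: one must verify that the Albanese morphism has geometrically connected fibers (so that fibers over $U(K)$ are integral, as required by Theorem~\ref{thm:fibration}) and that the geometric generic fiber $\mathbb{P}^1_{\overline{K(E)}}$ descends to an honest conic over $K(E)$. Once this structure is in place, the rest is a straightforward application of the fibration theorem, exploiting the very strong form of WHP for elliptic curves provided by Faltings' theorem over finitely generated fields of characteristic zero.
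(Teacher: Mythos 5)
Your proof is essentially the same as the paper's: both pass to the Albanese fibration $f\colon X\to E$ onto the elliptic curve $E=\mathrm{Alb}(X)$, observe that general fibers are $\mathbb{P}^1_K$ once a rational point is present, use that infinite subsets of $E(K)$ are not strongly thin (Remark~\ref{rem:curve_Faltings}), and conclude via Theorem~\ref{thm:fibration}(2). So the structure of the argument matches the paper's.

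The one genuine gap — which you flag honestly at the end — is the assertion that ``the Albanese morphism of a smooth projective variety has geometrically connected fibers'' as a ``classical fact.'' Stated that baldly it is not correct in general (even when the Albanese map is surjective, its Stein factorization can be a nontrivial finite cover of the Albanese variety), so it cannot simply be invoked. The paper supplies exactly the missing step: take the Stein factorization $X\xrightarrow{g}C\xrightarrow{h}E$, deduce $g_C\geq g_E=1$ from surjectivity of $h$ and $q(X)\geq g_C$ from surjectivity of $g$, hence $g_C=1$, so $C$ is an elliptic curve and the universal property of the Albanese forces $h=\mathrm{id}_E$, giving connected fibers. You should replace the appeal to a ``classical fact'' by this short Stein-factorization argument, which uses precisely the hypothesis $q(X)=1$; with that inserted, the rest of your proof goes through as written.
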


\begin{proof} 
    The Albanese variety $E$ of $X$ (cf.~\cite[Definition~5.2, Theorem~5.3]{Badescu}) is an elliptic curve, since ${\rm dim}(E)=q(X)=1$, 
    and so the Albanese map $f\colon X\to E$ (induced by the choice of a base point in $X(K)$) is surjective. 
    If $X\stackrel{g}{\to} C\stackrel{h}{\to} E$ is the Stein factorization of $f$,
    then $C$ is again an elliptic curve,
    as follows from $g_C\geq g_E$ and $q(X)\geq g_C$,
    and so $C=E$ and $h={\rm id}_E$ by the universal property of the Albanese variety.
    In particular, $f=g$ has geometrically connected fibers. 
    By generic smoothness, almost all fibers of $f$ are also smooth. 
    So since (a dense open subset of) $X_{\overline{K}}$ is covered by rational curves, 
    which are contracted by the Albanese map,
    almost all fibers of $f$ are (smooth projective) curves of genus zero. 
    As $X(K)$ is Zariski-dense in $X$,
    the set $\Sigma$ of $s\in E(K)$ such that $X_s(K)\neq\emptyset$ 
    is infinite
    and therefore not strongly thin in $E$ (see Remark \ref{rem:curve_Faltings}).
    Since $X_s\cong\mathbb{P}^1_K$ has HP for every $s\in\Sigma$ (Proposition \ref{prop:curve0}),
    it follows 
    from Theorem~\ref{thm:mixed_fibration}(2) 
    that $X$ has WHP.
\end{proof}

If $\kappa(X)=-\infty$ and $q(X)=0$, then 
$X_{\overline{K}}$ is rational and so 
$X\sim X'$ where
\begin{enumerate}
 \item $X'$ admits a conic fibration,
 i.e.~a surjective morphism $f\colon X'\rightarrow C$ with 
 both $C$ and the generic fiber of $f$ a (smooth projective) curve of genus zero, or
 \item $X'$ is a del Pezzo surface of degree $d\in\{1,\dots,9\}$, i.e.~the anticanonical divisor $-K_{X'}$ is ample and its self-intersection $K_{X'}^2 $ equals $d$,  
\end{enumerate}
cf.~\cite[Corollary on p.~27]{Iskovskih}.  

\begin{theorem}\label{thm:delPezzosurfaces}
Let $K$ be Hilbertian and $X$ a del Pezzo surface over $K$
of degree $d$
with $X(K)$ Zariski-dense in $X$.
Then $X$ has HP in each of the following cases:
\begin{enumerate}
    \item $d\in\{2,\dots,9\}$
    \item $d=1$ and $X$ admits a conic fibration.
\end{enumerate}
\end{theorem}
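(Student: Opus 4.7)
The plan is to treat each degree separately and, in each case, reduce HP for $X$ either to $K$-rationality (so that HP follows from Example~\ref{ex:HP_implies_Hilbertian}) or to a conic fibration over $\mathbb{P}^1_K$ to which Theorem~\ref{thm:generic_fiber_fibration}(1) can be applied. The global ingredients are the birational invariance of HP (Remark~\ref{rem:HP_birat}), the HP of $\mathbb{P}^n_K$ for Hilbertian $K$ (Example~\ref{ex:HP_implies_Hilbertian}), and the Hilbertianity of $K(t)$ (Theorem~\ref{thm:Hilbertian}(2)).

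For $d\in\{5,6,7,8,9\}$, the classical theorems of Enriques, Manin, Iskovskikh, and (for $d=9$) the Brauer--Severi characterization imply that a del Pezzo surface of degree $d \geq 5$ over $K$ with a $K$-rational point is $K$-birational to $\mathbb{P}^2_K$. Since $X(K)$ is Zariski-dense, it is in particular nonempty, so $X\sim\mathbb{P}^2_K$ and HP follows at once.

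For the remaining cases $d\in\{2,3,4\}$ and case (2), the strategy is to construct a birational model $X'\sim X$ admitting a conic fibration $f\colon X'\to\mathbb{P}^1_K$ with a $K$-rational section; equivalently, whose generic fiber $Z$ has a $K(t)$-rational point. Granting such a section, $Z\cong\mathbb{P}^1_{K(t)}$ has HP (as $K(t)$ is Hilbertian by Theorem~\ref{thm:Hilbertian}(2)), and Theorem~\ref{thm:generic_fiber_fibration}(1) gives HP for $X'$, hence for $X$ by birational invariance. For $d=3,4$ the conic bundle structure would be obtained from the Segre--Manin--Iskovskikh theory of minimal rational surfaces, with the Zariski density of $X(K)$ being used to locate the relevant configurations (e.g., Galois-stable skew lines on a cubic surface, or appropriate pairs of points on a del Pezzo of degree $4$) that give both the fibration and its section. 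For $d=2$, one exploits the double cover structure $X\to\mathbb{P}^2_K$ via Manin's construction involving tangent lines to the branch quartic. For case (2), one begins with the given conic fibration and uses the canonical $K$-point given by the base point of the anticanonical pencil $|-K_X|$ on a del Pezzo of degree $1$ to produce a $K$-rational section.

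The main obstacle throughout is the construction of the section (equivalently, the production of a $K(t)$-rational point on the generic fiber of the conic fibration), especially for $d=2$ and case (2), where rationality is not automatic from the mere existence of a dense set of rational points. This step combines the geometry specific to each degree with the hypothesis that $X(K)$ is Zariski-dense, and is the crux of the theorem.
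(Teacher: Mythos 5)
Your treatment of the degrees $d \geq 5$ is correct: a del Pezzo surface of degree $\geq 5$ with a $K$-rational point is $K$-rational, and HP follows from Example~\ref{ex:HP_implies_Hilbertian}. (The paper's proof of the theorem is purely by citation, to Streeter and Demeio--Streeter--Winter, so you are going beyond it in sketching an argument.)

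For $d \in \{2,3,4\}$ and case (2), however, the strategy has a gap that cannot be repaired along the same lines. You propose to find a birational model $X' \sim X$ with a conic fibration $f\colon X' \to \mathbb{P}^1_K$ whose generic fiber $Z$ has a $K(t)$-rational point. But then $Z \cong \mathbb{P}^1_{K(t)}$, so $K(X') = K(t)(u)$ is purely transcendental of transcendence degree $2$ over $K$, i.e., $X'$ — hence $X$ — is $K$-rational. In other words, successfully constructing the section would prove $K$-rationality of $X$. This is false in general: by the Segre--Manin--Iskovskikh theory of rational surfaces, there exist minimal del Pezzo surfaces of degrees $2$, $3$, $4$ over $\mathbb{Q}$ that are unirational (hence have Zariski-dense rational points) but not $\mathbb{Q}$-rational, and likewise non-rational minimal conic bundle surfaces relevant to case (2). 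So the section you want typically does not exist, and Theorem~\ref{thm:generic_fiber_fibration}(1) with a single conic fibration cannot give the result. Note also that a single $K$-rational point on $X$ — such as the anticanonical base point for $d = 1$ — does not produce a section of a conic fibration: a section is a $K$-rational \emph{curve} mapping isomorphically onto the base, a much stronger piece of data.

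The proofs cited in the paper go around the lack of a section by a genuinely different mechanism: Streeter's ``double conic bundle'' technique, building on the multiple-fibration argument of Corvaja--Zannier (Fermat quartic) and Demeio (elliptic K3 surfaces), further developed by Demeio--Streeter--Winter for degrees $2$ and $3$. One works with \emph{two} transversal conic fibrations, \emph{neither} of which need admit a section. Given a putative family of covers witnessing thinness, one starts from a rational point, passes along the two fibers through it (each a $\mathbb{P}^1_K$ with a $K$-point, hence with HP since $K$ is Hilbertian), and alternates between the two fibrations to produce a Zariski-dense set of rational points escaping all the covers. Zariski-density of $X(K)$ is used to make the starting choices sufficiently generic, not to build a section. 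This multiple-fibration idea is the key ingredient missing from your proposal.
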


\begin{proof}
(1) is \cite[Theorem 1.4(a)]{Streeter} for $d\geq 4$,
\cite[Corollary 3.7]{DemeioStreeterWinter}
for $d=3$
(as every del Pezzo surface of degree 3
is a smooth cubic hypersurface
\cite[Ch.~III, Thm.~3.5]{Kollar_book}), 
and \cite[Theorem~1.3]{DemeioStreeterWinter}  for $d=2$;
if $K$ is a number field, then 
even WWA for $X$ is known
in case $d=4$ 
\cite{SalbergerSkorobogatov}
(see also \cite[Corollary 1.6]{DemeioStreeter}),
$d=3$ \cite{SwinnertonDyer}
and $d=2$ \cite[Theorem 1.1]{DemeioStreeterWinter}.
(2) is \cite[Theorem 1.4(c)]{Streeter}.
\end{proof}

\begin{remark}
As it is believed that every surface $X$ over a number field $K$ with $X_{\overline{K}}$ rational and $X(K)\neq\emptyset$ is unirational
(see \cite[Problem 5.1(b)]{CT_problems}), 
Conjecture \ref{conj:WWA} suggests that such $X$ should satisfy WWA and therefore have HP.
 However, the case where $X$ is a del Pezzo surface of degree $d=1$ without a conic fibration, as well as the case where $X$ admits a conic fibration but is not a del Pezzo surface,
is open,
but in the first case, \cite[Theorem 1.5]{DSW25} 
proves the conjecture for an infinite family.
 \end{remark}

\subsection{Surfaces with $\kappa(X)=0$}
\label{sec:kappa0}

By the Enriques--Kodaira classification,  
if $\kappa(X)=0$,
then $X_{\overline{K}}\sim X'$
with a surface $X'$ over $\overline{K}$
that falls into one of the following three cases :
 
\begin{enumerate}
    \item\label{k0_case1} $X'$ is a K3 surface (i.e., $X'$ is simply connected and $K_{X'}=0\in{\rm Cl}(X')$).
    \item\label{k0_case2} $X'$ is an Enriques surface (i.e.,  there is a finite \'etale cover $Y\to X'$ of degree two with $Y$  a K3 surface over $\overline{K}$).  
  \item\label{k0_case3}  There is a finite \'etale cover $Y\to X'$ with $Y$ an abelian surface over $\overline{K}$. 
\end{enumerate}   
We briefly explain how to deduce this from \cite[Section 10]{Badescu} in the notation used there:
By \cite[Section~10.1, Theorem~10.2]{Badescu}, if $X'$ is minimal and $\kappa(X')=0$, then $(K^2)=0$, $p_g\leq 1$ and $b_2\in\{ 2,6,10,22\}$. 
If $b_2=22$, then $X'$ falls under case (1) \cite[Theorem~10.3]{Badescu}; 
if $b_2=10$, then $X'$ falls under case (2) \cite[Section~10.10, Proposition 10.14]{Badescu};
if $b_2=6$, then $X'$ is an abelian surface and thus falls under case (3) \cite[Theorem~10.19]{Badescu};
and if $b_2=2$, then $X'$ is a hyperelliptic surface \cite[Section~10.20]{Badescu} and thus 
also falls under case (3) \cite[Theorem~10.25]{Badescu}.

\begin{theorem}
Let $K$ be finitely generated and $\kappa(X)=0$.
Then $X$ has potential WHP in cases $(\ref{k0_case2})$ and~$(\ref{k0_case3})$.
\end{theorem}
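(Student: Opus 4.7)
My plan is to treat cases (3) and (2) separately, since they call for rather different arguments.

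For case (3), I would descend WHP along the étale cover by an abelian surface. After a finite extension $L/K$ over which the birational equivalence $X_L \sim X'$ and the étale cover $\pi\colon Y \to X'$ (with $Y$ an abelian surface) are all defined, Theorem~\ref{thm:alg_groups_2}(3) produces a further finite extension $L'/L$ such that $Y_{L'}$ has WHP. Since $\pi_{L'}$ is finite étale, it is both smooth and proper, so Theorem~\ref{thm:down}(3) transfers WHP to $X'_{L'}$. As $X_{L'} \sim X'_{L'}$ and both are smooth projective and geometrically integral, Proposition~\ref{prop:WHP_birat} further transfers WHP to $X_{L'}$, establishing potential WHP for $X$. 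Note that this argument also subsumes the sub-case in which $X'$ is itself an abelian surface (take $\pi = \mathrm{id}$) and the hyperelliptic case.

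For case (2), the direct analogue would be to descend from the K3 double cover $Y \to X'$, but this fails, since potential WHP is not known for general K3 surfaces. Instead I would exploit the fact that every Enriques surface carries a genus-one fibration $f\colon X \to \mathbb{P}^1$. After a finite extension $L/K$, such a fibration is defined over $L$ and, by the potential density of rational points on Enriques surfaces, one may arrange that $X(L')$ is Zariski-dense for some finite $L'/L$. The plan is then to apply Theorem~\ref{thm:mixed_fibration}(2) to $f$: if the set of $s \in \mathbb{P}^1(L')$ for which $X_s$ is a smooth elliptic curve of positive Mordell--Weil rank over $L'$ is not strongly thin in $\mathbb{P}^1_{L'}$, then each such $X_s$ has WHP by Example~\ref{ex:E_WHP} (extended to finitely generated fields via Theorem~\ref{thm:curves}(2)), and the fibration theorem yields WHP for $X_{L'}$, hence potential WHP for $X$.

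The main obstacle lies in case (2): securing a non-strongly-thin set of $s$ with positive-rank smooth fibers. This is delicate because an Enriques elliptic fibration has two multiple fibers that obstruct the existence of a global section, so the generic fiber of $f$ need not even be an elliptic curve over $L(\mathbb{P}^1)$. A natural workaround is to pass to the degree-two base change of $\mathbb{P}^1$ ramified over the two points supporting the multiple fibers---this is precisely the K3 double cover of $X$, now equipped with a Jacobian elliptic fibration---and to combine a Silverman-style specialization argument (in the spirit of \cite{Neron}) on this K3 with a descent of the resulting rank-positive fibers back to $X$ through the étale quotient.
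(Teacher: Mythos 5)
Your case (3) argument is exactly the paper's proof: the paper deduces that case from Theorem~\ref{thm:down}(3) together with Theorem~\ref{thm:alg_groups_2}(3), which is the same descent through the \'etale cover from an abelian surface (after a finite extension making everything defined and the abelian surface point-dense). This part is correct.

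For case (2), the paper simply cites \cite[Theorem A]{GCM} and does not give an argument, so your attempt to sketch one is doing genuinely new work. Unfortunately it has a gap more serious than the one you flag. You correctly notice that an Enriques elliptic fibration has two multiple fibers (so no section, and the generic fiber is a genus-one curve that may not even be an elliptic curve over the function field), but the more fundamental problem is in how you want to use the fibration theorem. Theorem~\ref{thm:mixed_fibration}(2) requires that for $s \in \Sigma$ the intersection $\Gamma \cap X_s$ be \emph{not thin} in $X_s$ --- that is, you need the fibers to carry (a restriction of) HP, not WHP. But positive-rank elliptic curves never have HP (Example~\ref{ex:abelian_var_not_HP}); the best you can get from Example~\ref{ex:E_WHP} or Theorem~\ref{thm:curves}(2) is WHP, i.e.\ that $X_s(K)$ is not \emph{strongly} thin. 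The ``non-thin base, non-strongly-thin fibers'' variant you would need is precisely the open Question~\ref{question:fibrations} in the paper, so this step cannot be invoked. Your final paragraph gestures toward a workaround via the Jacobian fibration on the K3 double cover and a N\'eron/Silverman-type specialization to produce positive-rank fibers, which is indeed closer in spirit to what a real proof must do, but it neither resolves the fibration-theorem mismatch nor explains how WHP on the K3 fibers descends to the Enriques quotient without running into the same obstruction; in fact Theorem~\ref{thm:down} transfers WHP from source to target of a smooth proper morphism, whereas you would want to go from the K3 to its quotient only after first establishing something on the K3 cover, and potential WHP for K3 surfaces is itself open in general. So case (2) remains unproven in your proposal.
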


\begin{proof}
 In case $(\ref{k0_case2})$ this is \cite[Theorem A]{GCM},
which proves the stronger statement that in this case $X$ has WHP whenever $X(K)$ is Zariski-dense
and ${\rm Pic}(X)={\rm Pic}(X_{\overline{K}})$.
  In case $(\ref{k0_case3})$, it follows from Theorem~\ref{thm:down}(3) and Theorem \ref{thm:alg_groups_2}(3).
\end{proof} 

A surface $X$ over $K$ is an \emph{elliptic surface} if there is a curve $C$ over $K$ and a surjective morphism $X\to C$ whose generic fibre is a curve of genus one; we refer to $X\to C$ as an {\em elliptic fibration}.
 
\begin{theorem}
Let $K$ be a number field and $\kappa(X)=0$. 
In case $(\ref{k0_case1})$,
if $X$ admits two distinct elliptic fibrations, then $X$ has potential HP.
\end{theorem}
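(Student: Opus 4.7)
My plan is to reduce the statement to one about potential WHP. Since $X$ is a K3 surface (case $(\ref{k0_case1})$), $X_{\overline K}$ is simply connected, so by the remark following the definition of the weak Hilbert property, WHP implies HP for any base change of $X$. Hence it suffices to show that $X$ has potential WHP.

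After replacing $K$ by a sufficiently large finite extension $L/K$, I may assume that both elliptic fibrations $f_1, f_2 \colon X \to \mathbb{P}^1_L$ are defined over $L$, that each of them admits a section (so their generic fibers are elliptic curves over $L(\mathbb{P}^1)$), and that there is a smooth fiber $F = f_1^{-1}(t_0)$, for some $t_0 \in \mathbb{P}^1(L)$, which is an elliptic curve of positive Mordell--Weil rank over $L$. The existence of such an $F$ after a finite extension is the Bogomolov--Tschinkel rank-elevation step for elliptic K3 surfaces: produce a non-torsion multi-section of $f_1$, then invoke Silverman's specialization theorem to conclude that positive rank holds on a Zariski-dense set of smooth fibers, and extract one such fiber. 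Because $f_1 \neq f_2$, the curve $F \subseteq X$ is not contained in any fiber of $f_2$, so $f_2|_F \colon F \to \mathbb{P}^1_L$ is a non-constant finite morphism, making $F$ a multi-section of $f_2$. Pulling back the generic fiber $\mathcal{E}$ of $f_2$ along $f_2|_F$ produces an elliptic curve $\mathcal{E}_{L(F)}$ with a distinguished $L(F)$-rational point $Q$, namely the image of the generic point of $F$ under $F \hookrightarrow X$.

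Granting that $Q$ is non-torsion in $\mathcal{E}_{L(F)}(L(F))$ (see below), Silverman's specialization theorem furnishes a thin set $T \subseteq F(L)$ such that the specialization $Q_t \in X_{f_2(t)}(L)$ is of infinite order for every $t \in F(L) \setminus T$. Since $F$ has positive rank, $F(L)$ is Zariski-dense in $F$, so $F(L) \setminus T$ is also Zariski-dense and in particular infinite; hence $\Sigma := f_2(F(L) \setminus T) \subseteq \mathbb{P}^1(L)$ is an infinite subset, which by Remark \ref{rem:curve_Faltings} is not strongly thin in $\mathbb{P}^1_L$. For each $s \in \Sigma$, the fiber $X_s$ is a smooth elliptic curve containing an $L$-point of infinite order, so $X_s(L)$ is Zariski-dense in $X_s$ and $X_s$ has WHP by Example \ref{ex:E_WHP}. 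Applying Theorem \ref{thm:mixed_fibration}(2) to $f_2$ with this $\Sigma$ and $\Gamma = X(L)$ then yields WHP for $X_L$, completing the reduction.

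The main obstacle I expect is verifying that $Q$ is non-torsion on the generic fiber of $f_2$ pulled back to $L(F)$, which is the Bogomolov--Tschinkel rank-propagation from $f_1$ to $f_2$ specific to the K3 setting. The idea is that the torsion multi-sections of $f_2$ of any fixed degree form a proper closed subset of the relevant Hilbert scheme, while the smooth fibers $\{f_1^{-1}(t_0)\}_{t_0 \in \mathbb{P}^1(L)}$ sweep out all of $X$, so a Hilbertian choice of $t_0$ avoids this bad locus and delivers a non-torsion $Q$. Equivalently, one can argue via canonical heights that if $Q$ were torsion then the points of $F$ would lie on finitely many torsion horizontal curves on $X$, contradicting the fact that $F$ itself is a fiber of a distinct elliptic fibration of $X$.
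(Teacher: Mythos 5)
Your reduction from HP to potential WHP via simple connectedness of K3 surfaces is correct, and the opening moves (enlarge $L$, find a positive-rank fiber $F$ of $f_1$, use that $f_1 \neq f_2$ to make $F$ a multi-section of $f_2$) are all sound. But the argument that is supposed to close the proof has two genuine gaps, and the second of them is fatal to the overall strategy.

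First, you invoke Remark~\ref{rem:curve_Faltings} to conclude that the infinite set $\Sigma = f_2(F(L)\setminus T)\subseteq\mathbb{P}^1(L)$ is not strongly thin. That remark is for curves of genus one, where it rests on Faltings's theorem applied to ramified covers; it is false for $\mathbb{P}^1$. For example, the image of $\mathbb{P}^1(L)$ under the ramified double cover $z\mapsto z^2$ is infinite and strongly thin. Infinitude tells you nothing about strong thinness on a rational base, and indeed nothing in your construction of $\Sigma$ prevents it from lying in the image of a ramified cover of $\mathbb{P}^1_L$.

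Second, and more fundamentally, Theorem~\ref{thm:mixed_fibration}(2) requires that $\Gamma\cap X_s = X_s(L)$ be \emph{not thin} in $X_s$ for every $s\in\Sigma$, i.e.\ that each fiber have HP, not merely WHP. Your fibers over $\Sigma$ are elliptic curves, which never have HP (Example~\ref{ex:abelian_var_not_HP}): $X_s(L)$ is always thin in $X_s$. So the hypothesis of the fibration theorem is simply not satisfied. Whether one can weaken ``not thin'' to ``not strongly thin'' in Theorem~\ref{thm:mixed_fibration}(2) is precisely the open Question~\ref{question:fibrations}, so you cannot appeal to that either.

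This second point is the structural reason why two elliptic fibrations are needed in the first place: a single fibration with elliptic fibers cannot yield HP, because the fibers themselves fail HP. You use $f_1$ only as a source of auxiliary points $\Sigma$ and then try to run a one-fibration argument through $f_2$; that discards exactly the leverage that the second fibration is supposed to provide. The proof the paper cites, \cite[Theorem~1.1]{GCM2} (formalizing the mechanism already in \cite[Theorem~1.1]{Demeio2} and \cite[Theorem~1.6]{CZ}), genuinely interleaves the two fibrations: given covers $\pi_i\colon Y_i\to X$, which are ramified because $X$ is simply connected, one plays $f_1$ and $f_2$ against each other so that points escaping the covers that restrict to \'etale covers on $f_1$-fibers are produced via $f_2$-fibers and vice versa. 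Your acknowledged further gap (non-torsion of $Q$) is real but secondary compared to these two issues.
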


\begin{proof}
This is  \cite[Theorem 1.1]{GCM2}. 
Prior to that, in  \cite[Prop.~4.4]{Demeio2}, potential HP  was proven for  Kummer  surfaces associated to a product of two elliptic curves, generalizing the example of the Fermat quartic $x^4+y^4 = z^4+ w^4$ in $\mathbb{P}^3_{\mathbb{Q}}$ 
which was shown to have HP over $\mathbb{Q}$ in \cite[Theorem~1.6]{CZ} using two distinct elliptic fibrations; 
this approach of using multiple elliptic fibrations  was 
already used in \cite[Theorem 46.1]{Manin} and 
formalized  
in \cite[Theorem~1.1]{Demeio2}.  In the more general case of finitely generated $K$,
\cite[Theorem B]{GCM} proves the claim if $X$ has Picard rank at most 9, as well as in some other cases.
\end{proof}
 
\begin{remark}
Since all smooth projective varieties with Kodaira dimension zero are special 
(Example~\ref{ex:special}),
Conjecture \ref{conj:campana_corvaja_zannier} predicts that
if $K$ is finitely generated and $\kappa(X)=0$, then $X$ has potential WHP,
and in case $(\ref{k0_case1})$ in fact potential HP,
as K3 surfaces over $\mathbb{C}$ are simply connected.
\end{remark}

\begin{remark}
Another class of K3 surfaces (case $(\ref{k0_case1})$)
is treated in \cite[Theorem~1.3]{GCH},
and we refer to Section \ref{sec:Kumvar} for a higher dimensional generalization.
We summarize the open problems on the arithmetic of a K3 surface $X$ over a number field $K$
that are relevant questions regarding potential HP. 
\begin{enumerate}
 \item Does there exist a finite field extension $L/K$ such that $X(L)$ is Zariski-dense in $X$?  This is not known if $X$ has (geometric)  Picard rank one. 
    \item Suppose that $X$ admits an elliptic fibration or has infinitely many automorphisms. Then, does $X$ have potential WHP? (In this case it is known that there is a finite field extension $L/K$ such that $X(L)$ is Zariski-dense in $X$ \cite{BT, HT00}.)
   \item     Does $X$ satisfy WWA whenever $X(K)\neq\emptyset$ (see \cite[p.~484]{SkoZar})? 
\end{enumerate}
\end{remark} 

\begin{remark}\label{remark:integral_points} Let $K$ be a number field, let $C$ be a smooth plane cubic in $\mathbb{P}^2_K$ and let $X=\mathbb{P}_K^2\setminus C$.
Using conic fibrations on $X$, \cite[Theorem~1.14]{Coccia} shows that there is a finite field extension $L/K$, a finite set of finite places $S$ of $L$, and a model $\mathcal{X}$ for $X_L$ over $\mathcal{O}_{L,S}$ such that $\mathcal{X}(\mathcal{O}_{L,S})$ is not strongly thin in $X$
(i.e., $X_L$ has WIHP, cf.~Section~\ref{sec:integral}). 
 We note that this (non-projective) special surface is an example of a ``log-K3 surface''. 
 In a similar vein, if $D$ is a curve of degree at most three in $\mathbb{P}^2_K$, then  \cite[Theorem~5.2]{Coccia} shows that WIHP holds for the (special) surface $\mathbb{P}^2_K\setminus D$.  
 More results on such surfaces are proven in \cite[Theorem~1.6]{Coccia2} and \cite[Theorem~1.4]{AlessandriLoughran}, 
 and extensions to ``Campana points'' (or ``semi-integral points'') on orbifold  special surfaces are obtained in  \cite{NakaharaStreeter}.  
\end{remark}

\subsection{Surfaces with $\kappa(X)=1$} 
If $X$ has Kodaira dimension one,
then the canonical divisor $K_X$  defines  an elliptic fibration $f\colon X\to C$ where $C$ is a curve of genus $q(X)$; see \cite[Theorem~6.3]{Liedtke}. 
As in the proof of Proposition \ref{prop:uniruled}, one can show that if 
$K$ is finitely generated and
$X$ has potential WHP, then $q(X)\leq 1$.  
For every closed point $c$ of $C$, the fibre $X_c$ is divisor on $X$, and so we may write $X_c = \sum_{i=1}^{r_c} a_i F_i$, where the $F_i$ are the distinct irreducible components of $X_c$,
and we define $m_c:= \mathrm{gcd}(a_1,\dots,a_{r_c})$. 
The elliptic fibration $X\to C$ has \emph{general type orbifold base} if 
\[
2 g_C -2 + \sum_{c\in C \textrm{ closed}} \Big(1-\frac{1}{m_c}\Big) >0.
\] 
Moreover, a surface $X$ of Kodaira dimension one is special if and only if its elliptic fibration does not have a general type orbifold base \cite[Lemma~3.34]{Ca04}.
Therefore, Conjecture \ref{conj:campana_corvaja_zannier}
predicts that if $K$ is finitely generated, then $X$ has potential WHP if and only if its elliptic fibration does not have a general type orbifold base. 
We prove the forward direction of this:
   
\begin{proposition}\label{thm:general_type_orbifold_base}
 Assume that
$K$ is finitely generated, $\kappa(X)=1$
and the canonical elliptic fibration $f\colon X\to C$ has general type orbifold base.
Then $X(L)$ is not Zariski-dense in $X$ for any finitely generated field extension $L/K$.
 In particular, $X$ does not have potential WHP. 
\end{proposition}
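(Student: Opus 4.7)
The plan is to reduce the statement to Faltings's theorem via a ramified base change $\pi\colon D\to C$ chosen to kill the orbifold multiplicities of the elliptic fibration $f\colon X\to C$. Concretely, a cover ramified with index exactly $m_c$ over every multiple-fibre point $c$ will, by Riemann--Hurwitz, have genus at least two precisely because the orbifold base is of general type; the Chevalley--Weil theorem will then force rational points on $X$ to project, up to finitely many fibres, onto $\pi$-images of $D(L')$-points for a controlled finite extension $L'/L$, after which Faltings finishes the job.

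First, since replacing $L$ by a finite extension only enlarges $X(L)$, I may freely enlarge $L$ by a finite extension of $K$ and then construct a finite surjective morphism $\pi\colon D\to C_L$ of smooth projective curves over $L$ with ramification index equal to $m_c$ at every preimage of every $c$ with $m_c>1$, and unramified elsewhere. In characteristic zero such a cover exists by the Riemann existence theorem applied to the orbifold $(C,\sum_c(1-1/m_c)[c])$ over $\overline{K}$, followed by Galois descent to a finite extension of $K$. The Riemann--Hurwitz formula then yields
\[
2g_D-2 \;=\; \deg(\pi)\Bigl(2g_C-2+\sum_{c\in C\textrm{ closed}}\Bigl(1-\tfrac{1}{m_c}\Bigr)\Bigr),
\]
and since the right-hand side is an integer and positive by the general type orbifold assumption, we obtain $g_D\geq 2$.

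Next, let $C^\circ\subseteq C_L$ be the open complement of the multiple-fibre points and set $D^\circ=\pi^{-1}(C^\circ)$, so that $\pi\colon D^\circ\to C^\circ$ is a finite étale cover of $L$-varieties. By the Chevalley--Weil theorem over finitely generated fields \cite[Theorem~3.8]{CDJLZ}, there exists a single finite extension $L'/L$ with $C^\circ(L)\subseteq \pi(D^\circ(L'))$. Faltings's theorem, in its extension to finitely generated fields of characteristic zero, gives $|D(L')|<\infty$, hence $C^\circ(L)$ is finite, and so the image $f(X(L))\subseteq C^\circ(L)\cup\{c\in C:m_c>1\}$ is a finite set. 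Consequently $X(L)$ is contained in the preimage under $f$ of a finite subset of $C$, that is in a finite union of fibres of $f$, which is a proper closed subset of the surface $X_L$. The ``in particular'' part is immediate since potential WHP for $X$ requires $X(L)$ to be Zariski-dense in $X_L$ for some finite extension $L/K$.

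The main technical obstacle is the construction of $\pi\colon D\to C$ with the exact ramification profile prescribed by the $m_c$'s and the verification that such a cover descends to a finite extension of $K$. In characteristic zero the existence over $\overline{K}$ is classical (and may equivalently be obtained via Kawamata's covering trick), but the arithmetic descent and its compatibility with the Chevalley--Weil input require some bookkeeping. A secondary point is the availability of Faltings's theorem over finitely generated fields of characteristic zero, which follows from the number field case by standard specialisation arguments.
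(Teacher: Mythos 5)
Your construction of the ramified cover $\pi\colon D\to C$ and the Riemann--Hurwitz computation giving $g_D\geq 2$ are correct, and the overall plan (reduce to Faltings on a genus-$\geq 2$ curve via Chevalley--Weil) is the right one. However, there is a genuine gap at the Chevalley--Weil step. You apply \cite[Theorem~3.8]{CDJLZ} to the finite \'etale cover $D^\circ\to C^\circ$ to conclude that $C^\circ(L)\subseteq\pi(D^\circ(L'))$ for a single finite extension $L'/L$. This is false: the Chevalley--Weil theorem for \emph{rational} points requires the base to be proper, and $C^\circ$ is an open curve as soon as there is at least one multiple fibre (the only interesting case, since otherwise the orbifold condition forces $g_C\geq 2$ and Faltings applies directly). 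The standard counterexample is $\mathbb{G}_m\xrightarrow{z\mapsto z^n}\mathbb{G}_m$: for no finite extension $L'$ of $\mathbb{Q}$ does $(L'^\times)^n\cap\mathbb{Q}^\times$ contain $\mathbb{Q}^\times$. For non-proper bases Chevalley--Weil only controls $S$-integral points, and you do not know that $f(X(L))$ lands in the $S$-integral points of $C^\circ$. Indeed your intermediate claim that $C^\circ(L)$ is finite is simply wrong in general: if $g_C=1$ and there is one multiple fibre, $C^\circ(L)$ is cofinite in the possibly infinite group $C(L)$, and if $g_C=0$ with three multiple fibres, $C^\circ(L)$ is cofinite in $\mathbb{P}^1(L)$.

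The paper avoids this by not working at the curve level at all. Campana's Lemma~3.34 produces, after a finite base extension, a finite \'etale cover $X'\to X_L$ of \emph{proper surfaces} such that $X'$ fibres over a smooth projective curve $D$ of genus $\geq 2$ (your curve $D$, in effect, is the orbifold base of $X'\to D$); the multiple fibres of $f$ get resolved in the normalised base change $X\times_C D$, and it is a local computation near the multiple fibres that this normalisation is \'etale over $X$. Chevalley--Weil then applies to the proper \'etale cover $X'\to X_L$ to control $X(M)$ via $X'(M')$, and Faltings on $D$ shows $X'(M')$ is not dense. In short, the extra piece you are missing is precisely the upgrade from a ramified cover of curves to an \'etale cover of surfaces; without it, the orbifold general type condition does not give you any finiteness of rational (as opposed to integral) points on the open base curve.
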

\begin{proof}
     Since $f$ has general type orbifold base, it follows from the theory of elliptic fibrations \cite[Lemma~3.34]{Ca04} that there is  a finite field extension $L/K$,  a curve $D$ of genus at least two over $L$,   an \'etale cover $X'\to X_L$ from a surface $X'$ over $L$ with $\kappa(X')=1$ and $q(X') \geq 2$, and an elliptic fibration $X'\to D$. 
     Since $D$ has genus at least two, it follows from Faltings's theorem \cite{Faltings} that $X'(M)$ is not dense for any finitely generated field extension $M/L$. Therefore, by the Chevalley--Weil theorem \cite[Theorem~3.8]{CDJLZ}, it follows that 
       also $X(M)$ is not Zariski-dense in $X$ for such $M$.
\end{proof}

\subsection{Surfaces with  $\kappa(X)=2$}

Finally, if the Kodaira dimension of $X$ is two (hence maximal), then $X$ is of general type (by definition). In particular,   
Lang's conjecture (Conjecture \ref{conjecture:lang_mordellic}) implies that
if $K$ is a number field, then $X$ does not have potential WHP.

\section{Some higher-dimensional varieties}
\label{sec:higher_dim}

\noindent
We conclude this survey with some results on varieties of arbitrary dimension (excluding algebraic groups, which are discussed in Section \ref{sec:algebraic_groups}).
As before, let $K$ always be a field of characteristic zero
and $X$ a variety over $K$.

\subsection{Closed subvarieties of abelian varieties}

 Work of Faltings \cite{Faltings2} 
  implies the following characterization of subvarieties of abelian varieties with potential WHP.

\begin{theorem}\label{thm:closed_subvarieties_of_ab_vars}
Assume that $K$ is finitely generated,
let $A$ be an abelian variety over $K$
and let $X\subseteq  A$ be a closed subvariety
which is geometrically integral. 
Then the normalization $X'$ of $X$ has potential WHP if and only if $X_{\overline{K}}$ is the translate of an abelian subvariety of $A_{\overline{K}}$.
\end{theorem}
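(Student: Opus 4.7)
I would prove the two implications separately. The ``if'' direction is a direct application of Theorem~\ref{thm:alg_groups_2}(3); the ``only if'' direction, which is the substantive one, will be proved by contrapositive using Faltings's theorem \cite{Faltings2}.

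For the ``if'' direction, if $X_{\overline{K}}=a+B$ with $B\subseteq A_{\overline{K}}$ an abelian subvariety, I would choose a finite extension $L/K$ over which both $a$ and $B$ are defined. Then $a+B_L$ is a closed $L$-subvariety of $A_L$ whose base change to $\overline{K}$ equals $X_{\overline{K}}$, so $X_L=a+B_L$, which is isomorphic to the abelian variety $B_L$ via translation by $-a$. In particular $X_L$ is smooth and therefore $X'_L=X_L$, so Theorem~\ref{thm:alg_groups_2}(3) produces a further finite extension over which this abelian variety, and hence $X'$, has WHP.

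For the ``only if'' direction, suppose $X_{\overline{K}}$ is not a translate of an abelian subvariety of $A_{\overline{K}}$; the plan is to show that $X'(L)$ fails to be Zariski-dense in $X'$ for every finite extension $L/K$, which already precludes potential WHP. The key ingredient is the Mordell--Lang theorem of Faltings \cite{Faltings2} applied to $X\subseteq A$ and the finitely generated subgroup $\Gamma=A(L)\subseteq A(\overline{K})$ (finite generation coming from Lang--N\'eron, since $L$ is itself finitely generated), which yields
\[
X(L)\;\subseteq\;X(\overline{K})\cap \Gamma\;\subseteq\;\bigcup_{i=1}^n(x_i+B_i)
\]
with each $x_i+B_i\subseteq X_{\overline{K}}$ a translate of an abelian subvariety of $A_{\overline{K}}$. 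Under our assumption, together with the irreducibility of $X_{\overline{K}}$, each such translate is a \emph{proper} closed subset, so $X(L)$ is not Zariski-dense in $X$; since the normalization $X'\to X$ is finite and surjective, the preimage of a proper closed subset is again proper, and $X'(L)$ inherits the failure of Zariski-density.

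The only delicate step is invoking Faltings's theorem in the correct generality --- for closed subvarieties of abelian varieties over finitely generated fields of characteristic zero, and for finitely generated subgroups of the $\overline{K}$-points --- which is exactly the form established in \cite{Faltings2}. Granted this black box, the rest of the argument is essentially bookkeeping.
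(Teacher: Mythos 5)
Your proposal is correct and takes essentially the same approach as the paper: both directions hinge on translating to a finite extension and invoking Faltings's Mordell--Lang theorem, with the ``if'' direction reducing to Theorem~\ref{thm:alg_groups_2}(3). The only cosmetic difference is that you phrase the ``only if'' direction as a contrapositive (not a translate $\Rightarrow$ rational points not dense) whereas the paper argues directly (WHP $\Rightarrow$ density $\Rightarrow$ translate), and you spell out the bookkeeping around the normalization map and the choice of $L$ a bit more explicitly.
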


\begin{proof}  
If $X$ is the translate of an abelian subvariety of $A$, then $X=X'$ has potential WHP by Theorem~\ref{thm:alg_groups_2}.
Conversely, assume that $X'_L$ has WHP for a finite extension $L$ of $K$. 
Then $X(L)$ is dense in $X$
and therefore $X_L$ is the translate of an abelian subvariety of $A_L$ by 
the special case of the Mordell--Lang conjecture proved in
\cite{Faltings2}.  
\end{proof} 
 
\begin{remark} 
If $X$ as in Theorem \ref{thm:closed_subvarieties_of_ab_vars} has positive dimension, then it does not have potential HP. 
Indeed, otherwise by
Theorem \ref{thm:closed_subvarieties_of_ab_vars}
there exists a finite extension $L/K$
such that $X_L$ has HP and can be endowed with the structure of an abelian variety, contradicting Example \ref{ex:abelian_var_not_HP}.  
\end{remark}
 
\subsection{Fano varieties}  
\label{sec:Fano}
A smooth projective geometrically integral $K$-variety $X$
is a {\em Fano variety} if 
the anticanonical divisor $-K_X$ is ample.
A Fano variety $X$ is special (as it is geometrically rationally connected, cf.~Example \ref{ex:special}), and $X_{\overline{K}}$ is  simply connected \cite[Corollary~4.29]{DebarreBook}.
Thus,   
if $K$ is finitely generated, then
Conjecture \ref{conj:campana_corvaja_zannier} predicts that $X$ has potential HP.
This is currently not known, unless $\dim X \leq 2$ (in which case $X$ is geometrically rational, cf.~Section~\ref{sec:kappaminusinfty}),
and for the special case
of del Pezzo varieties of degree at least three (see \cite[Definition 4.1]{Streeter} for the definition used here):

\begin{theorem}
Let $X$ be a smooth del Pezzo variety over $K$ of degree $d\geq 3$ and dimension $n\geq2$ with $X(K)\neq\emptyset$. Then $X$ has HP in each of the following cases:
\begin{enumerate}
\item $K$ is Hilbertian and $d\geq 4$.
\item $K$ is Hilbertian, $d=3$, and there exists a line on $X$ (under its anticanonical embedding).
\item $K$ is a number field and $d=3$. 
\end{enumerate}
\end{theorem}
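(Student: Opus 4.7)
The plan is to prove all three cases uniformly by induction on $n = \dim X$, taking Theorem~\ref{thm:delPezzosurfaces} as the base case $n=2$ and running the inductive step via a pencil of hyperplane sections combined with Theorem~\ref{thm:fibration}(1). One first reduces to the case where $X(K)$ is Zariski-dense in $X$ (otherwise $X$ trivially lacks HP); in all three cases this follows from $X(K) \neq \emptyset$ by the known $K$-unirationality of del Pezzo varieties of degree $d \geq 3$ with a rational point.

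For the inductive step with $n \geq 3$, embed $X$ in projective space by its fundamental divisor $H$ (so that $-K_X = (n-1)H$), and choose a generic codimension-two linear subspace $L$ through $p \in X(K)$ (and through the line $\ell$ in case (2)). Blowing up $X$ along $Z = X \cap L$ gives $\tilde X \sim X$ with a dominant morphism $f\colon \tilde X \to \mathbb{P}^1_K$ whose fibres are the members $X \cap H'$ of the pencil of hyperplanes through $L$. By adjunction, each smooth member is again a del Pezzo variety of dimension $n-1$ and the same degree $d$, and by construction every member contains $p$ (and $\ell$ in case (2)). Let $\Sigma \subset \mathbb{P}^1(K)$ be the cofinite subset of $s$ for which $X_s$ is smooth; then $\Sigma$ is not thin in $\mathbb{P}^1_K$. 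For every such $s$, the fibre $X_s$ is a smooth del Pezzo variety over $K$ satisfying the same case's hypothesis, so by the inductive hypothesis $X_s$ has HP, i.e.\ $X_s(K)$ is not thin in $X_s$. Applying Theorem~\ref{thm:fibration}(1) with $\Gamma = \tilde X(K)$ then yields that $\tilde X(K)$ is not thin in $\tilde X$, so $\tilde X$ has HP, and hence so does $X$ by Remark~\ref{rem:HP_birat}. The essential feature is that the argument stays over the original field $K$ throughout, so case (3) closes without the ``$K$ is a number field'' hypothesis ever being lost; by contrast, using Theorem~\ref{thm:generic_fiber_fibration}(1) would force a passage to $K(t)$ and break the induction in case (3).

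The main obstacle is the Bertini-type genericity required at the inductive step: that for a generic codimension-two subspace $L$ with the prescribed incidence constraints, $Z = X \cap L$ is smooth and the pencil has smooth members off a finite subset of $\mathbb{P}^1(K)$. For $L$ generic through $p$ this is standard Bertini-with-a-base-point, given smoothness of $X$. In case (2), the sharper requirement $\ell \subset L$ restricts $L$ to a sub-Grassmannian, and one must check that within this sub-family a generic member still meets $X$ transversely along $\ell$; this follows from the fact that for a del Pezzo variety of degree $d \geq 3$ in its $|H|$-embedding, the restriction to $\ell$ of sections of $H$ vanishing on $\ell$ generates the normal bundle of $\ell$ in $X$, which is the input required for Bertini with base locus $\ell$.
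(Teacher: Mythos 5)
The paper's proof is essentially a citation: cases (1) and (2) to Streeter's Theorem 1.5, case (3) to Demeio's Theorem 3.1, together with the observation (via Kuznetsov--Prokhorov) that a smooth del Pezzo variety of degree $3$ is a smooth cubic hypersurface in $\mathbb{P}^{n+1}$. Your inductive pencil argument is therefore a genuinely different route, and the idea of applying Theorem~\ref{thm:fibration}(1) over $K$ (rather than Theorem~\ref{thm:generic_fiber_fibration}(1), which would push the generic fibre into $K(t)$) is the right instinct. However, there are real gaps.

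First, the base locus $Z=X\cap L$ is problematic precisely in case (2). For $d=3$ you want $\ell\subset L$, and $Z$ is then a \emph{plane cubic containing a line}, so $Z=\ell\cup C$ with $C$ a conic: $Z$ is reducible, hence singular. Blowing up this singular $Z$ need not produce a normal variety, and more seriously the resulting fibration over $\mathbb{P}^1$ is not a Lefschetz pencil -- its fibres degenerate in a controlled way along $\ell\cap C$, and you cannot simply assert a ``cofinite $\Sigma$'' of smooth members as if this were the unconstrained Bertini situation. Note also that this is not merely a blemish at $n\geq 4$: already for $n=3$ the plane $L\supset\ell$ cuts out $\ell\cup C$. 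The ``Bertini with base locus $\ell$'' you invoke at the end is exactly the hard step, and the heuristic about the normal bundle of $\ell$ being generated by sections of $H$ vanishing on $\ell$ is not justified and not obviously true for all cubics (the Gauss map of $X$ restricted to $\ell$ already causes trouble for cubic surfaces). Streeter avoids this entirely by projecting from $\ell$ to get a conic bundle, not by sectioning through $\ell$.

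Second, and more tellingly, your argument for case (3) never uses that $K$ is a number field except as an instance of a Hilbertian field: you fibrate over $\mathbb{P}^1_K$, apply the inductive hypothesis to cubic $(n-1)$-folds over the same $K$ with a $K$-point (coming from $p$), and terminate at $n=2$ via Theorem~\ref{thm:delPezzosurfaces}(1), which already works over any Hilbertian field. If that chain were airtight, you would have proved that every smooth cubic hypersurface of dimension $\geq 2$ over a Hilbertian field of characteristic zero with a rational point has HP -- subsuming both (2) and (3) and rendering the line hypothesis superfluous. This is not what the theorem claims, and it is not in the cited literature; Demeio's argument for number fields goes through elliptic fibrations, which use number-field-specific input. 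You should treat this ``proving too much'' as a signal that the $K$-unirationality reduction and the constrained Bertini steps are hiding genuine difficulties, and trace carefully where the number-field hypothesis is actually doing work in the known proofs before claiming the inductive step closes uniformly.
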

 
\begin{proof}
(1) is \cite[Theorem~1.5(a)]{Streeter}.
(2) is \cite[Theorem~1.5(b)]{Streeter}.
(3) is \cite[Theorem~3.1]{Demeio2},
as $X$ is a smooth cubic hypersurface in this case.
Indeed,
the fundamental class $A_X$ is very ample and $\dim |A_X|=d+n-2=n+1$   \cite[Proposition~2.2(i,iv)]{Kuznetsov},
hence
the embedding $X\to \mathbb{P}^{n+1}$ associated to $|A_X|$ realizes $X$ as a smooth hypersurface of degree $A_X^n=d=3$.  
The special case $n=2$ 
was obtained by a similar argument in \cite[Theorem 46.1]{Manin}, as Colliot-Th\'el\`ene pointed out to us,
and also follows from \cite{SwinnertonDyer}, cf.~Theorem~\ref{thm:delPezzosurfaces}.
\end{proof}

\subsection{Smooth projective varieties with nef tangent bundle} 
Let $K$ be finitely generated and
$X$ smooth and projective.
Specialness is a ``positivity'' condition on the tangent bundle $T_X$ of $X$. For example, if $T_X$ is ample, trivial, or nef, then $X$ is special. In the two extreme cases that $T_X$ is trivial or ample, the situation is well-understood:

\begin{enumerate}
    \item If $T_X$ is ample, then $X$ has potential HP. Indeed, by Mori's celebrated proof of Hartshorne's conjecture \cite{Mori}, we have that $X_{\overline{K}}$ is isomorphic to $\mathbb{P}^n_{\overline{K}}$, so that it follows from Theorem \ref{thm:Hilbertian} that $X$ has potential HP.
    \item If $T_X$ is trivial, then $X_{\overline{K}}$ has the structure of an abelian variety. In particular, $X$ has potential WHP by Theorem \ref{thm:alg_groups_2}.
\end{enumerate}

If $T_X$ is nef, then the geometry of $X$ is not fully understood yet,
but \cite{DPS} shows that
the Albanese map $X\to \mathrm{Alb}(X)$ is smooth proper surjective and its fibres are Fano varieties with nef tangent bundle.  
     
 \begin{theorem}
     Assume that Fano varieties with nef tangent bundle are homogeneous spaces
     (the Campana--Peternell conjecture \cite[p.~170]{CampanaPeternell}). 
     If $K$ is a number field and $X$ is a smooth projective variety over $K$ with $T_X$ nef, then $X$ has potential WHP.
 \end{theorem}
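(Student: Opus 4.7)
The plan is to use the Albanese structure of \cite{DPS} together with the assumed Campana--Peternell conjecture (CP) to reduce, after a finite base extension and a finite étale cover, to a product of an abelian variety with a rational homogeneous variety; the product theorem (Theorem~\ref{thm:product}(2)) and descent along finite étale morphisms (Theorem~\ref{thm:down}(3)) then yield the result.

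By \cite{DPS}, the Albanese morphism $f\colon X\to A:=\mathrm{Alb}(X)$ is smooth, proper and surjective, with geometric fibres Fano varieties with nef tangent bundle. Under CP, each such fibre is a rational homogeneous variety; since rational homogeneous varieties are rigid (that is, $H^1(F,T_F)=0$), all geometric fibres of $f$ are isomorphic to a single $F=G/P$ with $G$ semisimple and $P$ a parabolic subgroup. In particular $f$ is étale-isotrivial, classified by an étale torsor under the linear algebraic group $\mathrm{Aut}(F)$ over $A$.

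The first, and hardest, step is to trivialise this torsor on a suitable cover, namely to find a finite extension $L/K$ and a finite étale cover $\pi\colon\widetilde{A}\to A_L$ such that
\[
X_L\times_{A_L}\widetilde{A}\;\cong\;F\times\widetilde{A}
\]
over $\widetilde{A}$, with $F=G/P$ the split form over $L$. Granted this, enlarge $L$ further so that $\widetilde{A}(L)\neq\emptyset$, whereupon $\widetilde{A}_L$ acquires the structure of an abelian variety over $L$. By Theorem~\ref{thm:alg_groups_2}(3) combined with Theorem~\ref{thm:base_change}(1), after a final enlargement of $L$ the variety $\widetilde{A}_L$ has WHP. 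Since $L$ is a number field it is Hilbertian (Theorem~\ref{thm:Hilbertian}(1)), and the rational variety $F\cong G/P$ has HP over $L$ by Proposition~\ref{prop:WHP_implies_Hilbertian}. Applying the product theorem (Theorem~\ref{thm:product}(2)) yields that $F\times\widetilde{A}_L\cong X_L\times_{A_L}\widetilde{A}_L$ has WHP. The projection $X_L\times_{A_L}\widetilde{A}_L\to X_L$ is finite étale as the base change of $\pi$, hence smooth and proper, so Theorem~\ref{thm:down}(3) descends WHP to $X_L$, giving that $X$ has potential WHP.

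The principal obstacle is the trivialisation step: étale torsors under an arbitrary linear algebraic group are not in general trivialised by finite étale covers. For instance, in the case $F=\mathbb{P}^n$ the fibration is a Brauer--Severi variety over $A$ and its classifying class lies in $\mathrm{Br}(A)$; here torsion classes are killed by pullback along suitable isogenies, which provide the required finite étale cover after enlarging $K$ to split the associated division algebras. The general case requires analogous isogeny-theoretic arguments on $A$ combined with the reductive structure of $\mathrm{Aut}(G/P)$, and is where the bulk of the technical work lies.
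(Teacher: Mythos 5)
Your route diverges from the paper's and founders at exactly the step you flag as the ``principal obstacle'': the trivialization of the Albanese fibration on a finite \'etale cover simply does not hold. Consider $E$ an elliptic curve over $\mathbb{Q}$ of positive rank, $L\in\mathrm{Pic}^0(E)(\mathbb{Q})$ a non-torsion line bundle, and $X=\mathbb{P}(\mathcal{O}_E\oplus L)$. Then $T_X$ is nef (here $\mathcal{O}_E\oplus L$ is semistable of degree zero, so the relative tangent bundle $T_{X/E}$ is numerically twice the nef tautological class, and $T_X$ is an extension of nef bundles), the Albanese map is $X\to E$ with fibre $\mathbb{P}^1$, and for any finite \'etale cover $\pi\colon\widetilde E\to E_{L'}$ (after a finite base extension and a translation, an isogeny) the pullback is $\mathbb{P}(\mathcal{O}\oplus\pi^*L)$ with $\pi^*L$ still non-torsion, because the dual isogeny has finite kernel -- so it is never $\mathbb{P}^1\times\widetilde E$. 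Your appeal to the Brauer group in the case $F=\mathbb{P}^n$ rests on a misconception: vanishing of the image of the $\mathrm{PGL}_{n+1}$-torsor class in $\mathrm{Br}(A)$ only says the fibration is $\mathbb{P}(\mathcal{E})$ for some vector bundle $\mathcal{E}$, not that it is a product; the torsor has continuous moduli invisible to the Brauer group, and isogenies do not kill them.

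The paper's argument (via \cite[Theorem 1.8]{Javanpeykar22}) avoids trivialization entirely. It applies the mixed fibration theorem (Theorem~\ref{thm:mixed_fibration}(2)) directly to $f\colon X\to\mathrm{Alb}(X)$: after a finite extension one produces, using Theorem~\ref{thm:alg_groups_fingen}(2), a not-strongly-thin set $\Sigma\subseteq\mathrm{Alb}(X)(L)$ over which the fibres have $L$-rational points and are therefore homogeneous spaces with HP by Theorem~\ref{thm:homogeneous}. The example above is harmless to this approach -- $\mathbb{P}(\mathcal{O}_E\oplus L)\to E$ has two sections, so every fibre over an $L$-point is already a split $\mathbb{P}^1$ -- and indeed this is precisely the mechanism of Proposition~\ref{prop:q_is_one}. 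The second half of your argument (the product theorem for $F\times\widetilde A$ followed by descent along \'etale covers via Theorem~\ref{thm:down}(3)) is sound reasoning that would apply if the splitting existed, but the splitting is the entire content and it fails.
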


 \begin{proof}
This is \cite[Theorem 1.8]{Javanpeykar22}, which uses
Theorem \ref{thm:mixed_fibration}(2) and Theorem \ref{thm:alg_groups_fingen}(2).
 \end{proof}

\subsection{Kummer varieties}
\label{sec:Kumvar}

If $A$ is an abelian variety, then the Kummer variety $\mathrm{Kum}(A)$ associated to $A$ is the quotient of $A$ by the natural involution on $A$ (given by multiplication with $-1$). 
Such varieties are special (Example \ref{ex:special} and Proposition \ref{thm:special}(2)) and simply connected, and should therefore have potential HP if $K$ is finitely generated,
according to Conjecture \ref{conj:campana_corvaja_zannier}.
In this direction we have the following result:

\begin{theorem} 
If $K$ is finitely generated,
$H$ is a (smooth projective) $K$-curve which admits a degree two morphism to $\mathbb{P}^1_K$,
and $\mathrm{Alb}^1(H)$ denotes the Albanese torsor of $H$,
then $\mathrm{Kum}(\mathrm{Alb}^1(H))$ has HP.
\end{theorem}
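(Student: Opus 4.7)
The plan is to exploit the $K$-rational structure that the degree-two morphism $\phi\colon H\to\mathbb{P}^{1}_{K}$ confers on $\mathrm{Alb}^{1}(H)=\mathrm{Pic}^{1}(H)$. Set $\mathcal{K}=\phi^{*}\mathcal{O}_{\mathbb{P}^{1}}(1)\in\mathrm{Pic}^{2}(H)(K)$; then the Kummer involution on $\mathrm{Pic}^{1}(H)$ takes the explicit form $\iota\colon D\mapsto\mathcal{K}-D$. Since the hyperelliptic involution $\iota_{H}$ acts as $-1$ on the Jacobian, the Abel--Jacobi map $H\to\mathrm{Pic}^{1}(H)$, $p\mapsto[p]$ (which needs no base point), satisfies $\iota([p])=[\iota_{H}(p)]$, and therefore descends to a canonical $K$-embedding $\mathbb{P}^{1}=H/\iota_{H}\hookrightarrow\mathrm{Kum}(\mathrm{Alb}^{1}(H))$. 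This provides a distinguished $K$-rational curve inside the Kummer.

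The heart of the argument is to construct a dominant rational map from a rational $K$-variety to $\mathrm{Kum}(\mathrm{Alb}^{1}(H))$ with geometrically irreducible generic fibre, so that HP transfers via Theorem~\ref{thm:down}(1). A crucial observation is that $2\cdot[\mathrm{Pic}^{1}(H)]=[\mathrm{Pic}^{2}(H)]=0$ in $H^{1}(K,\mathrm{Jac}(H))$ (since $\mathcal{K}\in\mathrm{Pic}^{2}(H)(K)$), so the torsor class $[\mathrm{Pic}^{1}(H)]$ is $2$-torsion. Combined with the birational parametrization $H^{(g)}\to\mathrm{Pic}^{g}(H)$ and with the $(\mathbb{Z}/2)^{g}$-Galois cover $H^{(g)}\to\mathbb{P}^{g}$ coming from $\phi$ coordinatewise, this exhibits $\mathrm{Kum}(\mathrm{Alb}^{1}(H))$ birationally as a $(\mathbb{Z}/2)^{g-1}$-Galois cover of $\mathbb{P}^{g}$ whose $K$-structure is governed by a specific class in $H^{1}(K,\mathrm{Jac}(H)[2])$. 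The embedded $\mathbb{P}^{1}$ from the first step provides $K$-rational sections of intermediate subcovers, and is to be used to identify a rational birational model for this cover by trivializing enough of the torsor obstructions.

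The main obstacle is that the finite $(\mathbb{Z}/2)^{g-1}$-cover $\mathrm{Kum}(\mathrm{Alb}^{1}(H))\to\mathbb{P}^{g}$ does not allow a direct transfer of HP from $\mathbb{P}^{g}$. Rather, one must use the $K$-rational $\mathbb{P}^{1}\subseteq\mathrm{Kum}(\mathrm{Alb}^{1}(H))$ to construct a dominant rational map from a higher-dimensional rational variety (for instance, a rational $\mathbb{P}^{1}$-bundle over $\mathbb{P}^{g-1}$) to $\mathrm{Kum}(\mathrm{Alb}^{1}(H))$ with geometrically irreducible positive-dimensional fibre. Once such a dominant map exists, HP follows by Theorem~\ref{thm:down}(1) since the rational source has HP over the Hilbertian field $K$ by Example~\ref{ex:HP_implies_Hilbertian}. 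The specific $2$-torsion twist distinguishing $\mathrm{Kum}(\mathrm{Alb}^{1}(H))$ from the Kummer of a generic twist such as $\mathrm{Pic}^{g}(H)$---for which a direct Hilbert-irreducibility argument applied to the induced double cover $w^{2}=f(x_{1})\cdots f(x_{g})$ of $\mathbb{P}^{g}$ would defeat HP---is the decisive input that allows this construction.
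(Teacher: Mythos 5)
Your opening steps are sound: with $\mathcal{K}=\phi^*\mathcal{O}_{\mathbb{P}^1}(1)\in\mathrm{Pic}^2(H)(K)$ the Kummer involution on $\mathrm{Alb}^1(H)=\mathrm{Pic}^1(H)$ is indeed $D\mapsto\mathcal{K}-D$, the Abel--Jacobi map intertwines the hyperelliptic involution $\iota_H$ with it, and one obtains a $K$-rational $\mathbb{P}^1\hookrightarrow\mathrm{Kum}(\mathrm{Alb}^1(H))$. The observation that $[\mathrm{Pic}^1(H)]$ is $2$-torsion in $H^1(K,\mathrm{Jac}(H))$ and the description of the cover $H^{(g)}\to\mathbb{P}^g$ are also correct.

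The fatal gap is in the proposed ``heart.'' You want a dominant rational map $Z\dashrightarrow\mathrm{Kum}(\mathrm{Alb}^1(H))$ from a rational $K$-variety $Z$ with geometrically irreducible generic fibre, to feed into Theorem~\ref{thm:down}(1). But any such map would make $\mathrm{Kum}(\mathrm{Alb}^1(H))$ unirational, even over $\overline{K}$, and that is false as soon as $g:=g_H\geq 2$: a resolution of the Kummer variety of an abelian $g$-fold has Kodaira dimension $0$ (its bicanonical sheaf is effective, since $2K$ descends from the abelian variety and the $\mathbb{C}^g/\{\pm1\}$ singularities are canonical), so it is not even uniruled. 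Already for $g=2$ the variety is birationally a K3 surface with $h^{2,0}=1$, which admits no dominant map from a unirational variety in characteristic zero. Your suggested source also fails dimensionally: a $\mathbb{P}^1$-bundle over $\mathbb{P}^{g-1}$ has dimension $g=\dim\mathrm{Kum}(\mathrm{Alb}^1(H))$, so a dominant rational map from it would be generically finite, contradicting your own requirement of positive-dimensional fibres.

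The paper's proof is a citation to \cite[Theorem~1.5]{GCH}. The whole point of that reference and of the neighbouring results (\cite{CZ}, \cite{Demeio2}, \cite{GCM}) is precisely that HP for such Calabi--Yau-type quotients does \emph{not} come from unirationality: the Fermat quartic K3 surface has HP over $\mathbb{Q}$ (Corvaja--Zannier) yet is not unirational. Those proofs use several ``independent'' fibrations, or---as the title of \cite{GCH} indicates---families of rational curves inside the Kummer variety, fed into a Demeio-style fibration argument that defeats every ramified cover without ever dominating the target by a rational variety. Your final step must be replaced by an argument of that kind; Theorem~\ref{thm:down}(1) cannot be the mechanism here.
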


\begin{proof}
This is \cite[Theorem~1.5]{GCH}.     
\end{proof}

\subsection{Symmetric products}   
Let $X$   be smooth and  projective.
Similar to $\mathbb{A}^n_K/G$ in Section \ref{sec:HIT},
the $n$-th symmetric power of $X$ is the quotient variety $\mathrm{Sym}^n(X) = X^n/S_n$, which is 
a normal projective variety of dimension $n\cdot \dim(X)$.

\begin{example}
If $X$ is rational over $K$ of positive dimension and has HP,
then $K$ is Hilbertian (Proposition~\ref{prop:WHP_implies_Hilbertian})
and $\mathrm{Sym}^n(X)$ is rational \cite{Mattuck}, 
and thus $\mathrm{Sym}^n(X)$ has HP. 
\end{example}

\begin{theorem}\label{thm:sym_prods_1}
Let $K$ be finitely generated,
$C$  a curve of genus $g_C\geq 2$ over $K$, 
and  $n\in\mathbb{N}$. 
\begin{enumerate}
    \item ${\rm Sym}^n(C)$ has potential WHP if and only if $n\geq g_C$.
    \item $\mathrm{Sym}^n(C\times \mathbb{P}^1_K)$ has potential WHP for every $n\geq g_C$.  
\end{enumerate}
\end{theorem}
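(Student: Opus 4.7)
The plan is to reduce all three assertions to the Abel--Jacobi map and to the preservation results in Section~\ref{sec:preservation}.

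For the $(\Rightarrow)$ direction of (1), suppose $1\le n<g_C$. After passing to a finite extension $L/K$ with $C(L)\neq\emptyset$, fix $c_0\in C(L)$ and identify $\mathrm{Pic}^n(C)_L\cong J_L$ via $D\mapsto[D-n[c_0]]$. Since $n\le g_C$, the Abel--Jacobi morphism $\mathrm{AJ}_n\colon\mathrm{Sym}^n(C)_L\to J_L$ is birational onto its image $W_n\subsetneq J_L$, of dimension $n$. I would show that $W_n$ is not a translate of an abelian subvariety of $J$: if $W_n=a+B$ with $B$ an abelian subvariety of dimension $n$, then the identity $W_n+W_n=W_{2n}$ (an effective divisor of degree $2n$ splits as a sum of two effective divisors of degree $n$) forces $W_{2n}=2a+B$ to have dimension $n$, contradicting $\dim W_{2n}=\min(2n,g_C)>n$ (since $n\ge1$ and $n<g_C$). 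Applying the Mordell--Lang case of Faltings's theorem used in the proof of Theorem~\ref{thm:closed_subvarieties_of_ab_vars}, $W_n(L')$ is not Zariski-dense in $W_n$ for any finitely generated $L'/L$, and hence neither is $\mathrm{Sym}^n(C)(L')$ by the surjectivity of $\mathrm{AJ}_n$, ruling out potential WHP.

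For the $(\Leftarrow)$ direction of (1), take $n\ge g_C$, enlarge $L/K$ so that $J_L$ has WHP (Theorem~\ref{thm:alg_groups_2}(3)) and $C(L)\ne\emptyset$, and fix $c_0\in C(L)$. The birational inverse $\sigma_{g_C}\colon J_L\dashrightarrow\mathrm{Sym}^{g_C}(C)_L$ of $\mathrm{AJ}_{g_C}$, composed with $D\mapsto D+(n-g_C)[c_0]$, gives a rational section $\sigma_n\colon J_L\dashrightarrow\mathrm{Sym}^n(C)_L$ of $\mathrm{AJ}_n$. Over the open locus of non-special classes, $\mathrm{AJ}_n$ is a Brauer--Severi bundle of relative dimension $n-g_C$, and the rational section trivializes its Brauer class generically; consequently $\mathrm{Sym}^n(C)_L$ is birational over $L$ to $J_L\times\mathbb{P}^{n-g_C}_L$. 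The product has WHP by Theorem~\ref{thm:product}(2) (using WHP of $J_L$ and HP of $\mathbb{P}^{n-g_C}_L$ over the Hilbertian field $L$), and since both varieties are smooth projective, Proposition~\ref{prop:WHP_birat} transfers WHP to $\mathrm{Sym}^n(C)_L$.

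For (2), fix $n\ge g_C$. By (1), there is a finite extension $L/K$ such that $\mathrm{Sym}^n(C)_L$ has WHP. I would apply Theorem~\ref{thm:mixed_fibration}(2) to the first-projection morphism $f\colon\mathrm{Sym}^n(C\times\mathbb{P}^1)_L\to\mathrm{Sym}^n(C)_L$ with $\Sigma=\mathrm{Sym}^n(C)(L)$ and $\Gamma=\mathrm{Sym}^n(C\times\mathbb{P}^1)(L)$. For each $s=\sum_i m_i[c_i]\in\mathrm{Sym}^n(C)(L)$, grouping the $c_i$'s into Galois orbits $O_j$ of common multiplicity $\mu_j$ and field of definition $L_j$, the fiber decomposes as $f^{-1}(s)\cong\prod_j\mathrm{Res}_{L_j/L}(\mathbb{P}^{\mu_j}_{L_j})$, which is geometrically integral and rational over $L$, hence has HP over the Hilbertian field $L$. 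Thus $\Gamma\cap f^{-1}(s)=f^{-1}(s)(L)$ is not thin in the fiber, and since $\Sigma$ is not strongly thin by WHP of $\mathrm{Sym}^n(C)_L$, the theorem yields WHP for $\mathrm{Sym}^n(C\times\mathbb{P}^1)_L$. The main obstacle I anticipate is in $(1)(\Leftarrow)$: justifying rigorously that the rational section of $\mathrm{AJ}_n$ delivers a birational isomorphism with $J_L\times\mathbb{P}^{n-g_C}_L$ requires restricting to the non-special open locus before invoking birational invariance of WHP for smooth projective varieties.
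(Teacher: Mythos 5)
Your overall strategy matches what the survey indicates the cited source (Bartsch--Javanpeykar--Levin) does: part (1) is handled via the Abel--Jacobi map together with Faltings and the product/birational-invariance results for WHP, and part (2) via the mixed fibration theorem (Theorem~\ref{thm:mixed_fibration}) combined with (1). Your forward direction of (1) (showing $W_n$ is not a coset via $W_n+W_n=W_{2n}$) and your backward direction (rational section of $\mathrm{AJ}_n$ giving $\mathrm{Sym}^n(C)_L\sim J_L\times\mathbb{P}^{n-g_C}_L$, then Theorem~\ref{thm:product}(2) and Proposition~\ref{prop:WHP_birat}) are both correct.

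One point in (2) needs tightening: you assert that for \emph{every} $s=\sum m_i[c_i]\in\mathrm{Sym}^n(C)(L)$ the scheme-theoretic fiber $f^{-1}(s)$ is $\prod_j\mathrm{Res}_{L_j/L}(\mathbb{P}^{\mu_j}_{L_j})$ and in particular integral, but over the discriminant locus (divisors with a repeated point) the fiber of $\mathrm{Sym}^n(C\times\mathbb{P}^1)\to\mathrm{Sym}^n(C)$ acquires non-reduced structure, so the hypothesis of Theorem~\ref{thm:mixed_fibration}(2) that $X_s$ is integral is not met there. This is easily repaired by taking $\Sigma$ to be the $L$-points of the dense open locus of reduced divisors (where each $m_i=1$, so the fiber is a product of Weil restrictions of $\mathbb{P}^1$'s, hence smooth, geometrically integral and rational); removing the $L$-points of a proper closed subset from a not strongly thin set leaves it not strongly thin, so the application of the fibration theorem still goes through. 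Finally, your observation that these fibers are $L$-rational (hence have HP over the Hilbertian field $L$) is a slight shortcut past the survey's invocation of Theorem~\ref{thm:homogeneous}, which would instead view them as homogeneous spaces for Weil restrictions of $\mathrm{PGL}$; both routes are valid.
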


\begin{proof}
(1) is \cite[Lemma~6.3]{BJL}.
(2) is \cite[Theorem~B]{BJL},
whose proof uses Theorem \ref{thm:mixed_fibration}, Theorem~\ref{thm:homogeneous} and (1).
\end{proof}

\begin{remark} 
Note that $\mathrm{Sym}^n(C\times \mathbb{P}^1_K)$ for $n\geq g_C$
is special, see \cite[Theorem 14]{CCR},
so Theorem \ref{thm:sym_prods_1} confirmed a case of Conjecture \ref{conj:campana_corvaja_zannier}.
In general, Conjecture \ref{conj:campana_corvaja_zannier}
suggests that 
if $K$ is finitely generated and
$X$ has potential WHP, then 
by Proposition \ref{thm:special} also
any resolution of singularities of ${\rm Sym}^n(X)$ has potential WHP.
This, however, is weaker than the statement that 
${\rm Sym}^n(X)$ itself has potential WHP.
Moreover, it does not apply to 
$X=C$ or
$X=C\times\mathbb{P}^1_K$ as in Theorem~\ref{thm:sym_prods_1},
cf.~Theorem~\ref{thm:curves} and Proposition~\ref{prop:uniruled}.  
\end{remark}

\end{document}